\documentclass[a4paper,reqno]{amsart}
\usepackage{amscd, amssymb, pb-diagram, amsmath}
\usepackage{amsthm}
\usepackage{slashed,hyperref}

\usepackage{tikz}

\usepackage{tikz-cd}
\usepackage[margin=3.7cm]{geometry}
\usepackage{color}
\usepackage{tensor}
\usetikzlibrary{arrows,matrix,backgrounds,decorations.markings}
\numberwithin{equation}{section}

\usepackage[all]{xy}

\DeclareFontFamily{OT1}{pzc}{}
\DeclareFontShape{OT1}{pzc}{m}{it}{<-> s * [1.2] pzcmi7t}{}
\DeclareMathAlphabet{\mathpzc}{OT1}{pzc}{m}{it}

\def\Dom{\operatorname{Dom}}

\def\ker{\operatorname{Ker}}

\def\rd{\operatorname{d}\!}

\def\Tr{\operatorname{Tr}}

\def\sup{\operatorname{sup}}
\def\max{\operatorname{max}}
\def\min{\operatorname{min}}

\def\Im{\operatorname{Im}}

\def\Prob{\operatorname{Prob}}

\def\d{\operatorname{d}}
\def\ent{\operatorname{h}}
\def\t_c{\operatorname{t_{\text{c}}}}
\def\pre{\operatorname{P}}
\def\D_c{D_{\text{c}}}
\def\L_c{L_{\text{c}}}
\def\Res{\operatorname{Res}}

\def\df{\operatorname{d_f}}

\def\LC{\operatorname{C_{c}^\infty}(G_A)}

\def\C{\mathbb{C}}

\def\R{\mathbb{R}}
\def\N{\mathbb{N}}
\def\Z{\mathbb{Z}}
\def\T{\mathbb{T}}

\newtheorem{thm*}{Theorem}
\newtheorem{thm}{Theorem}[section]

\newtheorem{cor}[thm]{Corollary}

\newtheorem{lemma}[thm]{Lemma}
\newtheorem{prop}[thm]{Proposition}

\newtheorem{thma}{Theorem}

\newtheorem{problema}{Open problem}

\theoremstyle{definition}
\newtheorem{definition}[thm]{Definition}

\theoremstyle{remark}
\newtheorem{remark}[thm]{Remark}

\newtheorem{example}[thm]{Example}

\makeatletter
\@namedef{subjclassname@2020}{%
  \textup{2020} Mathematics Subject Classification}
\makeatother

\begin{document}


\title[Five shades of KMS]{Five shades of KMS: Statistical properties in the spectral geometry of Cuntz--Krieger algebras}

\author{Dimitris M. Gerontogiannis}
\address{Institute of Mathematics of the Polish Academy of Sciences, ul. {\'S}niadeckich 8, 00–656, Warszawa, Poland}
\email{dgerontogiannis@impan.pl}

\author{Magnus Goffeng}
\address{Centre for Mathematical Sciences, Lund University, Box 118, SE-221 00, Lund, Sweden}
\email{magnus.goffeng@math.lth.se}

\begin{abstract}
We study spectral invariants arising in the noncommutative geometry of topological Markov chains and Cuntz--Krieger algebras. Their noncommutative geometry is described by spectral triples built from log-Laplacians, which are known to have non-trivial index theory and exotic quantum symmetries. We prove statistical eigenvalue asymptotics, local heat trace asymptotics,
local Weyl laws, and an analogue of Connes' trace theorem. In all cases the local asymptotics are governed by the Kubo--Martin--Schwinger state of the gauge action on the Cuntz--Krieger algebra. 
\end{abstract}

\subjclass[2020]{46L87 (primary), 37B10, 37D35, 47A10, 58B34}

\keywords{Cuntz--Krieger algebras, spectral triples, KMS-states, heat trace, Dixmier traces, Weyl law, thermodynamic formalism.}

\maketitle
\vspace{-0.9cm}

\tableofcontents
\vspace{-0.9cm}

\section{Introduction}

An idea going back to Weyl's law from 1911 is that the spectral properties of geometric operators encode the underlying geometry. Kac's seminal question ``can one hear the shape of a drum?'' from 1966 neatly summarises the mathematical study of spectral geometry \cite{topics}. This field has seen numerous highlights, such as the sharp Weyl law \cite{hormander68}, Cheeger's inequality and its importance to expander graphs \cite{lubotzky}, Selberg's trace formula \cite{hejhal}, as well as quantum ergodicity and quantum chaos \cite{zelditchqe}.

In parallel to spectral geometry, the development of Connes' program \cite{connesredbook} for spectral noncommutative geometry took place. This program aims to extend classical notions such as differential geometry, metric geometry, measure theory and dynamics to the broader setting of operator algebras through the lens of spectral geometry. The intersection of spectral noncommutative geometry and classical spectral geometry appears in the study of manifolds. In the full generality of the former, it seems futile to hope for as refined results as in the classical setting of manifolds. Nevertheless, in examples, one can hope to retrieve new information about noncommutative spaces by paraphrasing classical results. 

The purpose of this paper is to do this for the spectral noncommutative geometry of Cuntz--Krieger algebras as set up in \cite{GGM25}. Namely, we prove eigenvalue asymptotics (in a statistical sense) and compute local heat trace asymptotics, as well as a local Weyl law. Interestingly, this turns out to be far more than a mere exploration, as these results culminate in intrinsic descriptions of the KMS-state of the gauge action. Moreover, our spectral invariants recover dynamical properties of the associated subshift, for instance entropy, counting functions for triangles in the hyperbolic tree of finite paths, and can be used to prove an averaged form of quantum ergodicity.

Cuntz--Krieger algebras are operator algebras encoding topological Markov chains, and more generally, subshifts of finite type. They were introduced by Cuntz--Krieger in \cite{CK}, and serve as prototypical examples of highly noncommutative $C^*$-algebras. More precisely, such a $C^*$-algebra is generated by the continuous functions on the infinite path space of the subshift, the transfer operator together with the shift. A geometrically more tractable description is given by the groupoid $C^*$-algebra of the Deaconu--Renault groupoid of the subshift. Its $K$-homology recovers dynamical invariants such as the Bowen--Franks invariant \cite{CK2}. Moreover, Cuntz--Krieger algebras satisfy $KK$-duality \cite{KPDual} ensuring that, topologically, their noncommutative geometry behaves like a finite CW-complex. Nevertheless, Cuntz--Krieger algebras are purely infinite and therefore their noncommutative geometry must be infinite-dimensional \cite{connes89,GMCK,GRU}. As such, the spectral noncommutative geometry of Cuntz--Krieger algebras is highly non-classical compared to \cite{pongetian}.

The construction of spectral noncommutative geometry on Cuntz--Krieger algebras has been pursued via algebraically flavoured methods in \cite{GMCK,GMR}, and more analytically in \cite{GGM25}. Particular attention has been devoted to dimensional features (finite summability), topological features (index theory), and geometric ones (isometry groups). In all those cases, the noncommutative geometries exhaust the range of the Bowen--Franks invariant (i.e. odd $K$-homology) via abstract elliptic operators. Partial results have also been obtained for understanding the connection to equilibrium and KMS-states \cite{GRU}. We also mention related work in the commutative case of the continuous functions on the infinite path space \cite{farsietal,antoineputnam, Sharp}. Further, the noncommutative geometry approach \cite{GGM25} to topological Markov chains found new impetus in the recent work \cite{FGM}, where the quantum isometry group of Cuntz--Krieger algebras was computed. This led to the first ergodic action of a compact matrix quantum group on the Cantor space.

Here, we take our starting point in the analytic approach \cite{GGM25} based on the log-Laplacian \cite{GerMes}. Compared to the earlier constructions \cite{GMCK}, this approach appears particularly significant because the setup in \cite{GGM25} allows for computations using heat kernel methods, spectral asymptotics, and geometric invariants analogous to those appearing in classical spectral geometry.

\subsection{Setup and background}
We consider a topological Markov chain defined from a primitive matrix $A\in M_N(\{0,1\})$. Before going into the details of our results, let us describe the setup and provide further notation. For a more detailed presentation we refer to \cite{GGM25} and, when relevant, to later sections of the current paper. 

We write $\Omega_{A}$ for the associated one-sided shift space
\[\Omega_A=\{x_1x_2x_3\cdots\in \{1,\dots,N\}^{\mathbb N} : A_{x_nx_{n+1}}=1,\, n\in \mathbb N  \},\]
with shift map $\sigma(x_1x_2x_3\cdots)=x_2x_3\cdots$. Also, $V_{A}$ is the space of finite, admissible words $V_A:=\bigcup_k\{x_1\cdots x_k\in \{1,\dots,N\}^{k} \mid A_{x_nx_{n+1}}=1,\, 1\leq n\leq k-1 \}$ and the length of $w\in V_A$ is denoted by $|w|$. The empty word $\o$ has length $|\o|=0$. For more details, see Subsection \ref{topmark}. 

The Cuntz--Krieger algebra $O_A$ is the universal $C^*$-algebra generated by
elements $S_1,\dots,S_N$ satisfying
\[\sum_{i=1}^N S_iS_i^*=1,\quad\mbox{and}\quad S_i^*S_k=\delta_{i,k} \sum_{j=1}^N A_{ij}S_jS_j^*. \]
These relations encode the admissible transitions of the subshift and this is more visible in the groupoid model $C^*_r(G_A)$ of $O_A$, see Subsection \ref{subsec:drgroup}. Further, finite words $\alpha=\alpha_1\cdots \alpha_n\in V_A$ define elements $S_\alpha=S_{\alpha_1}\cdots S_{\alpha_n}$. The projections $S_\alpha S_\alpha^*$ correspond to the characteristic functions of the clopen cylinder sets $C(\alpha):=\{\alpha x_{n+1}x_{n+2}\cdots \in \Omega_A\}$ that form a basis for the topology of $\Omega_A$. The commutative subalgebra generated by these projections is canonically isomorphic to the $C^*$-algebra of continuous functions $C(\Omega_A)$, which forms a maximal abelian subalgebra of $O_A$. For more details see Subsection \ref{subsec:dkal}.

From the perspective of dynamical systems, it is interesting to study the equilibrium states \cite{ruelle}. In the operator algebra literature \cite{lacanesh}, equilibrium states often correspond to KMS-states at the operator algebra level. In our setting, there is a natural action $\alpha$ of $\T$ on the algebra $O_A$, called the gauge action, given on generators by
\[\alpha_z(S_i)=zS_i, \qquad z\in \T. \]
A state $\varphi$ on $O_A$ satisfies the KMS condition at inverse temperature
$\beta>0$ if
\[
\varphi(ab)=\varphi\!\left(b\,\alpha_{\mathrm{e}^{-\beta}}(a)\right)
\]
for elements $a,b$ analytic for the action $\alpha$. In general, for irreducible shifts, KMS-states are in one-to-one correspondence with equilibrium measures in thermodynamic formalism. In our primitive setting, the Perron--Frobenius--Ruelle transfer operator
\[
(\mathcal L f)(x)
=
\sum_{\sigma(y)=x} f(y)
\]
determines a distinguished equilibrium (conformal) measure, and the associated KMS-state $\varphi_A$ on $O_A$ is obtained from integration against that measure. More precisely, there is a unique KMS-state for the gauge action and it has inverse temperature $\beta=\log\lambda_A>0$, where $\lambda_A$ is the Perron--Frobenius eigenvalue of $A$.

Passing to noncommutative geometry \cite{connesredbook}, spectral triples are expected to provide a geometric description of Cuntz--Krieger algebras and their dynamics. Namely, a spectral triple $(\mathcal A,\mathcal H,D)$ on $O_A$ consists of a dense $*$-subalgebra $\mathcal A\subseteq  O_A$, a Hilbert space $\mathcal H$ on which $O_A$ is represented, and an unbounded self-adjoint operator $D$ with compact resolvent such that, for all $a\in \mathcal A$, we have that $a\Dom(D)\subseteq \Dom D$ and $[D,a]$ extends to a bounded operator. In the commutative setting of a compact manifold, one can take $D$ as a self-adjoint, elliptic, first order differential operator (e.g. a Dirac type operator), encoding both metric, topological and measure-theoretic information via spectral geometry \cite{connesredbook}. The spectral triples on $O_A$ of relevance in this paper are those of \cite{GGM25}, which use the conformal structure on the subshift $\Omega_A$ and the associated Deaconu--Renault groupoid $G_A$, described in Subsection \ref{subsec:drgroup}. 

\enlargethispage{\baselineskip}
\enlargethispage{\baselineskip}

As the discussion above indicates, Cuntz--Krieger algebras form a bridge between symbolic dynamics, operator algebras, statistical mechanics, and noncommutative geometry. Topological Markov chains supply the combinatorial data, KMS-states encode equilibrium phenomena, and spectral triples furnish a geometric interpretation of the resulting noncommutative spaces. The goal of this paper is to fully clarify these connections by showing that the spectral noncommutative geometry of \cite{GGM25} reproduces dynamical features governed by thermodynamic formalism.

\subsection{Main results}

We study the spectral triple $(\LC,L^2(G_A),D)$ of $O_A$ from \cite{GGM25}. Here $G_A$ denotes the Deaconu--Renault groupoid of the shift map $\sigma:\Omega_A\to \Omega_A$. As discussed in more detail in Subsection \ref{subsec:ham}, we decompose $G_A=\bigsqcup_{\gamma\in I_A} G_\gamma$ into particular bisections indexed by $I_A\subseteq V_A\times V_A$, through which we lift the conformal structure from $\Omega_A$ to $G_A$. Then, we write 
$$L^2(G_A)=\bigoplus_{\gamma\in I_A} L^2(G_\gamma).$$
On each $L^2(G_\gamma)$ we have the log-Laplacian $\Delta_\gamma$ and set $\Delta:=\bigoplus_\gamma \Delta_\gamma$. The Hamiltonian type operator $D$ is given by 
$$D:=-\Delta+(2P-1)M_L,$$
where $P$ is a projection with $\Im P\subset \ker \Delta$ and $M_L$ is the multiplication operator by a locally constant function $L$ on $G_A$. Also, $\LC$ denotes the convolution $*$-subalgebra of $C_r^*(G_A)$ of locally constant compactly supported continuous functions on $G_A$. It holds that the commutator $[D,f]$ extends to a bounded operator on $L^2(G_A)$, for all $f\in \LC$.

Our main results concern the eigenvalue asymptotics of $D$, the asymptotic behaviour of local heat traces $t\mapsto \Tr(a\mathrm{e}^{-t|D|})$ and local counting functions $\theta\mapsto \Tr(a\chi_{[0,\theta]}(|D|))$ for $a\in \LC$, where $\chi_{[0,\theta]}(|D|)$ are the spectral projections of $|D|$. \\

We begin by discussing our results on eigenvalue asymptotics. The reader can find more details in Section \ref{secstatarad}. It was shown in \cite{GGM25} that $L^2(G_A)$ admits a particular ON-basis of eigenfunctions for $\Delta$ taking the form $(\mathrm{e}_{\gamma},\mathrm{e}_{(\gamma,\nu,j)})_{(\gamma,\nu,j)\in \mathfrak{I}_A}\subset \LC$, for an appropriate index set $\mathfrak{I}_A\subseteq I_A\times V_A\times \{1,\ldots,N\}$. In $\mathfrak{I}_A$, the parameter $\gamma$ ranges over $I_A$. Also, at this point it is important to note that $I_A$ admits range and source maps, namely $r:I_A\to V_A$ and $s:I_A\to V_A\setminus \{\o\}$. Now, for each $\gamma$, the collection $(\mathrm{e}_{\gamma},\mathrm{e}_{(\gamma,\nu,j)})_{(\nu,j)}$ forms an ON-basis for $L^2(G_\gamma)$ out of eigenfunctions of $\Delta_{\gamma}$ and $\nu$ ranges over $s(\gamma)V_A$. Moreover, the kernel of $\Delta$ is spanned by $(\mathrm{e}_{\gamma})_{\gamma\in I_A}$. Also, the non-zero eigenvalues form $(\lambda^A_{s(\gamma)}(\nu))_{(\gamma,\nu,j)\in \mathfrak{I}_A}$ with $\Delta\mathrm{e}_{(\gamma,\nu,j)}=\lambda^A_{s(\gamma)}(\nu)\mathrm{e}_{(\gamma,\nu,j)}$. 

Therefore, in order to understand the eigenvalue asymptotics of $D$ we first describe those of each $\Delta_\gamma$. Specifically, for fixed $\gamma$, the non-zero eigenvalues are $(\lambda^A_{s(\gamma)}(\nu))_{\nu\in s(\gamma)V_A}$ with $\lambda^A_{s(\gamma)}(s(\gamma))=\lambda_{A}u_{s(\gamma)_{|s(\gamma)|}}$, and if $\nu\in s(\gamma)V_A \setminus \{s(\gamma)\}$ then
$$\lambda_{s(\gamma)}^A(\nu)=\lambda_{A}u_{\nu_{|\nu|}}+\lambda_{A}\sum_{k=0}^{|\nu|-|s(\gamma)|-1} u_{\nu_{|s(\gamma)|+k}}(1-P_{\nu_{|s(\gamma)|+k},\nu_{|s(\gamma)|+k+1}}).$$
Here $u=(u_i)_{i=1}^N$ is the $\ell^1$-normalized Perron--Frobenius eigenvector and $P_{ij}$ are the transition probabilities in the graph defined from $A$, see Equation \eqref{transprob}. Except in certain regular cases, the eigenvalues $\lambda^A_{s(\gamma)}(\nu)$ do not appear to admit a simpler structural description beyond upper and lower bounds by multiples of $|\nu|$. However, we can describe their statistics as follows. For $n\in \mathbb N$ consider the projection onto the first $n$ terms $\pi_n:\Omega_{A}\to V_{A}^n$. Consider the potential function $F_A:\Omega_{A}\to [0,1)$ given by 
$$F_A(x):=u_{x_1}(1-P_{x_{1}, x_{2}}).$$

\begin{thma}
\label{thmA}
Assume that $\tau$ is a $\sigma$-invariant ergodic measure on $\Omega_A$. Then, for every $\gamma\in I_A$ and $\tau$-almost every $x\in C(s(\gamma))$ we have 
$$\lim_{n\to \infty} \frac{\lambda_{s(\gamma)}^{A}(\pi_n(x))}{n}=\lambda_{A} \tau(F_A).$$ 
Moreover, there are $0<c_0\leq c_1<1$ independent of $\tau$ such that $c_0\leq \tau(F_A)\leq c_1 \lambda_A^{-1}$. If $\tau$ is the Parry measure, then $\tau(F_A)$ is the total Gini impurity\footnote{The same statistical quantity used to measure species diversity in ecology.} of the graph given by $A$. Also, the graph is out-regular\footnote{I.e. the row-sums of $A$ are constant} if and only if $F_A$ is the constant function $N^{-1}(1-\lambda_A^{-1}).$
\end{thma}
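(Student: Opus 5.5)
The plan is to turn the eigenvalue formula into a Birkhoff sum for the potential $F_A$, and then to get the remaining claims by simplifying $F_A$ using the explicit form of the transition probabilities.

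\emph{Step 1: reduction to a Birkhoff sum.} Fix $\gamma\in I_A$ and $x\in C(s(\gamma))$, and write $m=|s(\gamma)|$. For $n>m$ the word $\nu=\pi_n(x)$ lies in $s(\gamma)V_A\setminus\{s(\gamma)\}$. We have $\nu_{m+k}=x_{m+k}$, so the summand $u_{\nu_{m+k}}(1-P_{\nu_{m+k},\nu_{m+k+1}})$ equals $F_A(\sigma^{m+k-1}x)$. The formula for $\lambda^A_{s(\gamma)}(\nu)$ therefore becomes
\[
\lambda^A_{s(\gamma)}(\pi_n(x))=\lambda_A u_{x_n}+\lambda_A\sum_{j=m-1}^{n-2}F_A(\sigma^j x).
\]
The first term is bounded by $\lambda_A$, so it vanishes after dividing by $n$. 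The function $F_A$ is locally constant, hence continuous and in $L^1(\tau)$. Birkhoff's ergodic theorem then gives $\frac1n\sum_{j<n}F_A(\sigma^jx)\to\tau(F_A)$ for $\tau$-a.e.\ $x$. Shifting the summation window by the fixed amount $m-1$ changes the sum by at most $2m$, so the limit is unchanged. This proves the first claim.

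\emph{Step 2: simplifying $F_A$.} I expect the transition probabilities in \eqref{transprob} to be the Parry ones, $P_{ij}=A_{ij}u_j/(\lambda_A u_i)$, with $u$ the right Perron--Frobenius eigenvector. This is consistent with the out-regular claim, since for constant row sums the right eigenvector is constant. Checking this convention against \eqref{transprob} is the first thing I would do. Granting it, for admissible $x$ we get
\[
F_A(x)=u_{x_1}-\lambda_A^{-1}u_{x_2}.
\]
Since $\tau$ is $\sigma$-invariant, $\tau(u_{x_2})=\tau(u_{x_1})$, and hence
\[
\tau(F_A)=(1-\lambda_A^{-1})\int u_{x_1}\,d\tau .
\]
Primitivity with $N\geq 2$ gives $\lambda_A>1$ and $u_i>0$ for all $i$. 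This yields the lower bound with $c_0=(1-\lambda_A^{-1})\min_i u_i$. For the upper bound, $\lambda_A u_i=\sum_j A_{ij}u_j\leq\|u\|_1=1$, so $u_i\leq\lambda_A^{-1}$ and $\tau(F_A)\leq(1-\lambda_A^{-1})\lambda_A^{-1}$. Thus $c_1=1-\lambda_A^{-1}<1$ works, and neither constant depends on $\tau$. If the convention for $P$ turns out different, I would instead use $\sigma$-invariance to average $F_A$ over $N$ consecutive shifts. Primitivity forces every orbit to pass through a vertex of out-degree at least $2$ within boundedly many steps, and that would still give the bounds.

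\emph{Step 3: the Parry measure and out-regularity.} For the Parry measure, $\mu(C(i))=u_iw_i$ with $w$ the suitably normalised left eigenvector, and $\mu(C(ij))=\mu(C(i))P_{ij}$. I would write $\tau(F_A)=\sum_{i,j}\mu(C(ij))\,u_i(1-P_{ij})$. The remaining work is to rearrange this expression and identify it with the total Gini impurity, which amounts to matching the paper's definition of that quantity.

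For the out-regular characterisation, suppose first that all row sums equal $r$. Then $\lambda_A=r$, $u_i=N^{-1}$, and so $F_A\equiv N^{-1}(1-\lambda_A^{-1})$. Conversely, suppose $F_A\equiv c$. Then every edge $i\to j$ satisfies $u_j=\lambda_A(u_i-c)$. The affine map $t\mapsto\lambda_A(t-c)$ is expanding because $\lambda_A>1$, so it has a unique fixed point, and every periodic orbit of it is that fixed point. Following a cycle in the graph through a vertex $i$ returns to $u_i$, so $u_i$ is periodic for this map and hence equals the fixed point. Primitivity puts every vertex on a cycle, so $u$ is constant. Then $Au=\lambda_Au$ says exactly that all row sums equal $\lambda_A$.

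I expect the main obstacle to be bookkeeping rather than substance: confirming the convention for $P_{ij}$ and $u$, and matching the Parry computation to the paper's precise definition of Gini impurity.
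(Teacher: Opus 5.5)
Your proposal is correct. The convention you were unsure about is exactly \eqref{transprob}, so the fallback averaging argument is not needed. Step~1 is the paper's proof of Theorem~\ref{lem:eigen_asym}. The Parry part of Step~3 is Proposition~\ref{prop:sA}: inserting $\mu(C(ij))=p_iP_{ij}$ into your sum gives $\sum_i p_iu_i\sum_j P_{ij}(1-P_{ij})=s_A(0)$. That is how the paper defines the total Gini impurity, so this step closes in one line.

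The genuine difference is in the bounds on $\tau(F_A)$.
\begin{itemize}
\item The paper imports the lower bound $c_0$ from \cite[Proposition 4.6]{GGM25}. It obtains the upper bound pointwise, $F_A\leq u_{x_1}\max\{1-P_{ij}:A_{ij}=1\}\leq \lambda_A^{-1}c_1$, using $u_i\leq \lambda_A^{-1}$ from conformality.
\item You instead use the coboundary identity $F_A=g-\lambda_A^{-1}g\circ\sigma$ with $g(x)=u_{x_1}$. Hence $\tau(F_A)=(1-\lambda_A^{-1})\tau(g)$ for every $\sigma$-invariant $\tau$.
\end{itemize}
Your route is self-contained and gives explicit constants $c_0=(1-\lambda_A^{-1})\min_i u_i$ and $c_1=1-\lambda_A^{-1}$. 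It also shows that the lower bound comes from invariance rather than from any pointwise property, since $F_A$ vanishes on cylinders $C(ij)$ with $P_{ij}=1$. The paper does use the same identity, but only later, inside the proof of Proposition~\ref{prop:sA2}. Your formula also explains directly why the constant there is $\sum_i p_iu_i=\mu(g)$.

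For the out-regular characterisation, the paper iterates the contracting relation $u_{x_1}=c'+\lambda_A^{-1}u_{x_2}$ forward along an infinite path, which forces $u\equiv c'\lambda_A(\lambda_A-1)^{-1}$. You instead run the inverse, expanding affine map around cycles. The two arguments are equivalent; the forward iteration is marginally shorter because it needs no cycle argument.
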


The first statement of Theorem \ref{thmA} appears as Theorem \ref{lem:eigen_asym} below, while the remaining, more general statements are proved in Section \ref{secstatarad}.\\

We are also interested in studying the heat operator $\mathrm{e}^{-t|D|}$. Since we have built $\Delta$ as a ``log-Laplacian'', it is natural to ask if there is in fact anything logarithmic about it. In other words, is the exponential $\mathrm{e}^{-t|D|}$ given in terms of Riesz potential operators, which in the Euclidean setting are inverse powers of fractional Laplacians? We prove this in the special case that the graph of $A$ is out-regular.

\begin{thma}
\label{thmB}
Assume that all row-sums of $A$ are equal to $d\geq 2$. We set $c=(d-1)N^{-1}$, $\delta=c\log_{\lambda}(\mathrm{e})$, and $\df=\log_{\lambda} d$. For $t>\df \delta^{-1}$, the heat operator $\mathrm{e}^{-t|D|}$ admits a kernel 
$$K_t\in L^1(G_A\times G_A,\mu_{G_A}\times \mu_{G_A})\cap C(G_A\times G_A).$$ 
Specifically, for $g_1\neq g_2\in G_A$, it is given by 
$$K_t(g_1,g_2)=\begin{cases}\mathrm{e}^{-tc|\gamma|}k_{\gamma,t}(g_1,g_2), & \text{if  }\; g_1,g_2\in G_{\gamma}\\
0, & \text{otherwise}\end{cases}.$$ 
Here $k_{\gamma,t}\in L^1(G_{\gamma}\times G_{\gamma},\mu_{\gamma}\times \mu_{\gamma})\cap C(G_{\gamma}\times G_{\gamma})$ for $t>\df\delta^{-1}$, is the kernel of the heat operator $\mathrm{e}^{-t\Delta_{\gamma}}$ on each bisection $G_{\gamma}$ and is given by 
$$k_{\gamma,t}(g_1,g_2)=h_\gamma(t)+H_\gamma(t)\mathrm{d}_\gamma(g_1,g_2)^{-\df+\delta t},$$
where
\begin{align*}
h_\gamma(t)&=d^{|s(\gamma)|-1}N\left(1-\frac{(d-1)\mathrm{e}^{-dN^{-1}t}}{d\mathrm{e}^{-ct}-1}\right)\\
H_\gamma(t)&=\frac{N-N\mathrm{e}^{-ct}}{d\mathrm{e}^{-ct}-1}\mathrm{e}^{-t(dN^{-1}-c|s(\gamma)|)}.
\end{align*}

\end{thma}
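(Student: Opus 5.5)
We compute the heat kernel by a direct spectral expansion in the ON-eigenbasis $(\mathrm{e}_{\gamma},\mathrm{e}_{(\gamma,\nu,j)})$ of \cite{GGM25}. Since $D=-\Delta+(2P-1)M_L$ and $\Im P\subseteq\ker\Delta$, we first check that in the out-regular case $M_L$ acts on each $L^2(G_\gamma)$ as the scalar $c|\gamma|$ (with $L$ constant on bisections). We also check that $|D|$ is diagonal in this basis, with eigenvalue $c|\gamma|$ on $\mathrm{e}_\gamma$ and $\lambda^A_{s(\gamma)}(\nu)+c|\gamma|$ on $\mathrm{e}_{(\gamma,\nu,j)}$. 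Then $\mathrm{e}^{-t|D|}=\bigoplus_\gamma \mathrm{e}^{-tc|\gamma|}\mathrm{e}^{-t\Delta_\gamma}$. This gives the block-diagonal form of $K_t$, with vanishing off-diagonal blocks $G_\gamma\times G_{\gamma'}$, and reduces everything to computing $k_{\gamma,t}$.

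In the out-regular case, Theorem \ref{thmA} tells us $F_A\equiv N^{-1}(1-\lambda_A^{-1})$, and $u_i=N^{-1}$, $P_{ij}=d^{-1}A_{ij}$, $\lambda_A=d$. Hence the eigenvalue formula collapses to
\[
\lambda^A_{s(\gamma)}(\nu)=dN^{-1}+c\,(|\nu|-|s(\gamma)|).
\]
It depends only on $m=|\nu|-|s(\gamma)|\geq 0$. Thus
\[
k_{\gamma,t}(g_1,g_2)=\mathrm{e}_\gamma(g_1)\overline{\mathrm{e}_\gamma(g_2)}+\sum_{m\geq0}\mathrm{e}^{-t(dN^{-1}+cm)}\,Q_m(g_1,g_2),
\]
where $Q_m=\sum_{|\nu|=|s(\gamma)|+m,\,j}\mathrm{e}_{(\gamma,\nu,j)}\otimes\overline{\mathrm{e}_{(\gamma,\nu,j)}}$ is the kernel of the projection onto the $m$-th eigenspace. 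The eigenfunctions $\mathrm{e}_{(\gamma,\nu,j)}$ are Haar-type wavelets on cylinders, differences of normalized characteristic functions of $C(\nu i)$ inside $C(\nu)$. Therefore $Q_m=E_{m+1}-E_m$, where $E_m$ is the conditional expectation onto functions constant on cylinders of length $|s(\gamma)|+m$ (transported to $G_\gamma$). Its kernel is $E_m(g_1,g_2)=\mu_\gamma(C_m(g_1))^{-1}\chi_{\{\text{same cylinder}\}}$, and in the regular case the Parry-type measure makes $\mu_\gamma(C_m)\propto d^{-(|s(\gamma)|+m)}N^{-1}$ up to the normalisation fixing the factor $d^{|s(\gamma)|-1}N$.

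Next we write $\mathrm{d}_\gamma(g_1,g_2)=\lambda^{-\ell}=d^{-\ell}$, where $\ell=\ell(g_1,g_2)$ is the length of the common prefix. Then $E_m(g_1,g_2)$ is nonzero exactly for $|s(\gamma)|+m\leq \ell$, and $k_{\gamma,t}$ becomes a finite geometric sum in $m$ up to $\ell-|s(\gamma)|$. We sum by parts, $\sum_m a_m(E_{m+1}-E_m)$ with $a_m=\mathrm{e}^{-t(dN^{-1}+cm)}$, to get terms $d^{m}$ weighted by $\mathrm{e}^{-ctm}$, i.e. the series $\sum (d\mathrm{e}^{-ct})^m$. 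This yields constant parts, which are $h_\gamma(t)$ after adding the $\mathrm{e}_\gamma$ term, plus a term proportional to $(d\mathrm{e}^{-ct})^{\ell}=\mathrm{d}_\gamma^{-\log_d(d\mathrm{e}^{-ct})}=\mathrm{d}_\gamma^{-\df+\delta t}$, noting that $\log_\lambda$ with $\lambda=d$ gives $\df=1$ and $\delta=c\log_d\mathrm{e}$. The prefactor of this term is $H_\gamma(t)$. The denominators $d\mathrm{e}^{-ct}-1$ come from the geometric sums. The main obstacle is bookkeeping this summation, in particular the boundary terms at $m=0$ (the $\lambda_A u$ term) and at $m=\ell-|s(\gamma)|$, so that the stated $h_\gamma,H_\gamma$ come out exactly. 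I would verify it first on the full shift.

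Finally, regularity. For $t>\df\delta^{-1}$ the exponent $-\df+\delta t$ is positive, so $\mathrm{d}_\gamma^{-\df+\delta t}$ extends continuously by $0$ to the diagonal, and $k_{\gamma,t}$ is continuous and bounded on the compact $G_\gamma\times G_\gamma$. The pointwise series converges absolutely there, which justifies identifying it with the $L^2$ spectral kernel. Integrability of $K_t$ over $G_A\times G_A$ then follows from $\|k_{\gamma,t}\|_{L^1}\lesssim h_\gamma+H_\gamma$ times $\mu_\gamma(G_\gamma)^2$, summed against $\mathrm{e}^{-tc|\gamma|}$. The growth of $\#\{\gamma:|\gamma|=n\}$ and of $d^{|s(\gamma)|}\mathrm{e}^{tc|s(\gamma)|}$ is exponential with base $d$, and it is beaten by $\mathrm{e}^{-tcn}$ precisely when $t$ exceeds the threshold. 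I expect to use the measure $\mu_\gamma(G_\gamma)\sim d^{-|s(\gamma)|}$ to offset the $d^{|s(\gamma)|}$ factors.
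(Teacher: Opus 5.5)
Your proposal is correct. Its skeleton is the paper's: expand $\mathrm{e}^{-t\Delta_\gamma}$ spectrally on $C(s(\gamma))$; note that for $g_1\neq g_2$ with common prefix of length $\ell$ only the levels $m\le M:=\ell-|s(\gamma)|$ contribute; sum a finite geometric series in $d\mathrm{e}^{-ct}$; and assemble $K_t$ block-diagonally using $L|_{G_\gamma}=c|\gamma|$.

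You differ from the paper in how you evaluate the eigenprojection kernels. The paper plugs in the explicit wavelets of Proposition~\ref{prop:eigenfunctions}. It computes $\sum_j (a^{(\nu,j)}_k)^2=d^{|\nu|-1}(d-1)N$, and $\sum_j a^{(w,j)}_k a^{(w,j)}_{k'}=-d^{|w|-1}N$ for $k\neq k'$. You instead write $Q_m=E_{m+1}-E_m$ and read off $E_m(g_1,g_2)=Nd^{|s(\gamma)|+m-1}$ when $g_1,g_2$ share a prefix of length $|s(\gamma)|+m$, and $0$ otherwise. These are the same numbers, since the paper's sums over $j$ are exactly kernels of wavelet projections. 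Your route is basis-free and shows that Theorem~B needs only the description of the eigenspaces, not the closed-form eigenfunctions. The paper's route has the explicit basis at hand, which it reuses elsewhere, e.g.\ in Corollary~\ref{cortoLaplacian_est}.

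The bookkeeping you flag does close. There is no special term at $m=0$, since $\lambda_Au_{s(\gamma)_{|s(\gamma)|}}=dN^{-1}$ is the $m=0$ case of $dN^{-1}+cm$. The only boundary term is $Q_M(g_1,g_2)=-Nd^{\ell-1}$, so
\[
k_{\gamma,t}(g_1,g_2)=Nd^{|s(\gamma)|-1}\Bigl[1+(d-1)\mathrm{e}^{-tdN^{-1}}\frac{(d\mathrm{e}^{-ct})^M-1}{d\mathrm{e}^{-ct}-1}-\mathrm{e}^{-tdN^{-1}}(d\mathrm{e}^{-ct})^M\Bigr],
\]
which rearranges to $h_\gamma(t)+H_\gamma(t)(d\mathrm{e}^{-ct})^{\ell}$.

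Three small fixes:
\begin{enumerate}
\item The metric parameter $\lambda>1$ is free and should not be set equal to $d$. Nothing changes, because $\mathrm{d}_\gamma(g_1,g_2)^{-\df+\delta t}=\lambda^{\ell(\log_\lambda d-ct\log_\lambda\mathrm{e})}=(d\mathrm{e}^{-ct})^\ell$ for every $\lambda$, and the threshold $\df\delta^{-1}=c^{-1}\log d$ does not depend on $\lambda$.
\item Pointwise convergence is not by itself enough to identify your series with the $L^2$ kernel. Use instead that the partial-sum kernels converge uniformly: the tail is at most $\sum_{m\ge M'}\mathrm{e}^{-t(dN^{-1}+cm)}Nd^{|s(\gamma)|+m}$. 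Hence they converge in Hilbert--Schmidt norm.
\item In the $L^1$ estimate, the factor $\mathrm{e}^{tc|s(\gamma)|}$ in $H_\gamma$ is cancelled by the $\mathrm{e}^{-tc|s(\gamma)|}$ in $\mathrm{e}^{-tc|\gamma|}$, not by growth in $d$. Use $\mathrm{d}_\gamma\le 1$, $\mu_\gamma(G_\gamma)=N^{-1}d^{1-|s(\gamma)|}$, and the fact that there are at most $N^2d^{a+b}$ indices $\gamma$ with $|r(\gamma)|=a$, $|s(\gamma)|=b$. The total is then bounded by a constant times $\sum_{a\ge 0,\,b\ge 1}(d\mathrm{e}^{-ct})^a(\mathrm{e}^{-ctb}+d^{-b})$, which is finite exactly when $t>c^{-1}\log d$.
\end{enumerate}
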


The reader can find the heat kernel computation for the bisections in Theorem \ref{thm:Heat_kernel_formula}, which uses the closed formula for the eigenfunctions from Proposition \ref{prop:eigenfunctions}. The conclusion for $\mathrm{e}^{-t|D|}$ is in Proposition \ref{heat_kernel_formula_groupoid}.

\begin{problema}
It poses an interesting open problem to compute the heat kernel of $\mathrm{e}^{-t|D|}$ for general $A$.
\end{problema}

\begin{remark}
We note that 
\begin{equation}
\label{etdeltaon}
\mathrm{e}^{-t\Delta_\gamma}=d^{-|s(\gamma)|+1}N^{-1}h_\gamma(t)P_{\Delta_\gamma}+H_\gamma(t)R_{\gamma,\delta t},
\end{equation}
where $P_{\Delta_\gamma}$ denotes the projection onto the kernel of $\Delta_\gamma$ and $R_{\gamma,\delta t}$ is the Riesz potential acting on $L^2(G_{\gamma},\mu_{\gamma})$ by $$R_{\gamma,\delta t}f(g_1)=\int_{G_{\gamma}}\frac{f(g_2)}{d_{\gamma}(g_1,g_2)^{\df-\delta t}} \d \mu_{\gamma}(g_2).$$
In particular, the heat operator of the log-Laplacian is up to a finite rank operator a Riesz potential operator.\\
\end{remark}

A large portion of the paper is spent on the asymptotic behaviour of heat traces and counting functions, results pertaining to the spirit of spectral geometry in spectral noncommutative geometry. The main goal of the next theorem is to connect these classical notions with thermodynamic formalism through the KMS-state. A crucial feature is the following \textit{critical time} $\t_c\in (0,\infty)$ associated to $(\LC,L^2(G_A),D)$, given by 
$$\t_c:=\lambda_{A}^{-1}\sup\limits_{\tau}\frac{\ent_{\tau}}{\tau(F_A)},$$
where the supremum is taken over all $\sigma$-invariant ergodic probability measures $\tau$ on $\Omega_A$, and $\ent_{\tau}$ is the measure entropy of $\tau$. This critical time is first observed in Proposition \ref{prop:Laplacian_est}. For out-regular graphs, we have that $\t_c=\df\delta^{-1}$, the threshold appearing in Theorem \ref{thmB}.

\begin{thma}
\label{thmC}
The KMS-state $\varphi_A$ on $O_A$ can be reconstructed from the spectral triple $(\LC,L^2(G_A),D)$ in any of the following ways:
\begin{enumerate}
\item Let $P_D$ denote the non-negative spectral projection of $D$. For any $a\in \LC$, the local positive part heat traces $t\mapsto \Tr(P_Da\mathrm{e}^{-tD})$ are finite for $t>\t_c$ and extend to meromorphic functions of $t\in \C$ with at most order $N$ poles, situated in a specific discrete set independent of $a$. Moreover, the family of states $$\psi_t(a):=\frac{\Tr(P_D a\mathrm{e}^{-tD})}{\Tr(P_D\mathrm{e}^{-tD})},$$ weak$^*$ converges to $\varphi_A$, as $t\to \t_c^+$.
\item For any $a\in \LC$, the local heat traces $t\mapsto \Tr(a\mathrm{e}^{-t|D|})$ are finite for $t>\t_c$ and extend to meromorphic functions of $t\in \C$ with at most order $3N$ poles, situated in a specific discrete set independent of $a$. Moreover, the family of states 
$$\varphi_t(a):=\frac{\Tr(a\mathrm{e}^{-t|D|})}{\Tr(\mathrm{e}^{-t|D|})},$$
weak$^*$ converges to $\varphi_A$, as $t\to \t_c^+$.
\item For any dilation invariant extended limit $\omega$ on $\ell^{\infty}(\N)$ with associated Dixmier trace $\Tr_{\omega}$, and for every $a\in \LC$ it holds that 
$$\varphi_A(a)=\frac{2\t_c}{C}\Tr_{\omega}(a |D|^{-2}\mathrm{e}^{-\t_c |D|}).$$ 
Here $C:=\lim_{t\to \t_c^+} (t-\t_c)^3\Tr(\mathrm{e}^{-t|D|})>0$ is explicitly described in Theorem \ref{thm:HeatTr}. 
\item For $a\in \LC$, we can define the twisted $\zeta$-function
$$\zeta_D(s;a):=\mathrm{Tr}(a|D|^{-s}\mathrm{e}^{-\t_c |D|}), \quad \mbox{holomorphically for}\quad \mathrm{Re}(s)>3.$$ 
Then $\zeta_D(\cdot\,;a)$ extends to a meromorphic function in $\C$ whose poles lie at the points $s\in \{1,2,3\}$ and are of order most $1$. Moreover,
$$\varphi_A(a)=\frac{2}{C}\mathrm{Res}_{s=3}\zeta_D(s;a).$$ 
\end{enumerate}
If the row-sums and column-sums of $A$ are both uniformly constant (i.e. the graph is out-regular and in-regular), we also have that:
\begin{enumerate}
\item[(5)] for any $a\in \LC$, it holds 
$$\varphi_A(a)=\lim_{\theta\to \infty} \frac{\Tr(\chi_{[0,\theta]}(|D|) a)}{\Tr(\chi_{[0,\theta]}(|D|))}.$$
\end{enumerate}
\end{thma}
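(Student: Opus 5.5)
The plan is to reduce everything to explicit sums over the eigenbasis $(\mathrm{e}_\gamma,\mathrm{e}_{(\gamma,\nu,j)})$. Since $D=-\Delta+(2P-1)M_L$ with $\operatorname{Im}P\subset\ker\Delta$ and $L$ locally constant, I expect $D$ to be diagonal, or block diagonal with finite blocks, in a basis adapted to the decomposition $L^2(G_A)=\bigoplus_\gamma L^2(G_\gamma)$. Its eigenvalues would then be $\lambda^A_{s(\gamma)}(\nu)+L(\gamma)$-type shifts, with $L$ growing linearly in $|\gamma|$, together with the kernel contributions $\pm L(\gamma)$. For $a\in\LC$, which by linearity I can take to be the characteristic function of a compact open bisection, I would compute the diagonal matrix coefficients $\langle a\,\mathrm{e}_{(\gamma,\nu,j)},\mathrm{e}_{(\gamma,\nu,j)}\rangle$. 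Because the eigenfunctions are locally constant on cylinders, these coefficients vanish unless $a$ restricts to the unit space near the relevant cylinder. They then reduce, for $|\nu|$ large, to $\mu(C(\nu))$-type weights. In other words, the trace $\Tr(a\,\mathrm{e}^{-t|D|})$ becomes a sum
\[
\sum_{\gamma}\sum_{\nu\in s(\gamma)V_A} w_a(\gamma,\nu)\,\mathrm{e}^{-t(\lambda^A_{s(\gamma)}(\nu)+c_\gamma)},
\]
with weights $w_a$ that are eventually multiplicative along paths.

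The meromorphic continuation and the finiteness threshold $\t_c$ come from writing $\lambda^A_{s(\gamma)}(\nu)$ as a Birkhoff sum of the potential $\lambda_AF_A$ along $\nu$, plus boundary terms. The sum over $\nu$ is then a sum of matrix powers: if $M(t)$ is the $N\times N$ matrix with entries $A_{ij}\mathrm{e}^{-t\lambda_A u_i(1-P_{ij})}$, the $\nu$-sum equals $\sum_k v^T M(t)^k w=v^T(1-M(t))^{-1}w$. This is a rational function of $\mathrm{e}^{-t\cdot(\text{finitely many rates})}$, hence meromorphic. Its poles, of order at most $N$, occur where $\det(1-M(t))=0$. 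The convergence abscissa is where the spectral radius of $M(t)$ equals $1$, i.e.\ where the pressure $P(-t\lambda_AF_A)=0$. By the variational principle this is exactly $\t_c=\lambda_A^{-1}\sup_\tau h_\tau/\tau(F_A)$, consistent with Proposition \ref{prop:Laplacian_est}. The $\gamma$-sum and the kernel part contribute further geometric-series factors, which is the source of the order $3N$ in (2). For (1) only the positive part enters, giving order $N$.

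For the weak$^*$ convergence in (1) and (2), I would note that as $t\to\t_c^+$ the leading singular term of each local trace is $\mathrm{Res}$-type data of $(1-M(t))^{-1}$. By Perron--Frobenius, this is the rank-one projection onto the Perron eigenvectors of $M(\t_c)$. The ratio $\Tr(a\,\cdot)/\Tr(\cdot)$ then tends to a state that is the conformal measure of the critical potential, extended by zero off the unit space via the conditional expectation. The main obstacle is identifying this limit with $\varphi_A$. The weights $w_a$ involve $\mu_\gamma(C(\nu))\sim\lambda_A^{-|\nu|}u$, and these combine with the critical eigendata to produce the Parry conformal measure. I would verify this directly on cylinder projections $S_\alpha S_\alpha^*$, checking the eigenvalue relation $\lambda_A^{-1}A$ and that off-diagonal $S_\alpha S_\beta^*$ with $|\alpha|\neq|\beta|$ give vanishing leading order. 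The uniqueness of the gauge KMS state then closes the argument.

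For (3) and (4), the first task is to determine the full pole structure at $\t_c$ of $\Tr(a\,\mathrm{e}^{-t|D|})\sim C\varphi_A(a)(t-\t_c)^{-3}$. I would then apply the Mellin transform
\[
|D|^{-s}\mathrm{e}^{-\t_c|D|}=\Gamma(s)^{-1}\int_0^\infty r^{s-1}\mathrm{e}^{-(\t_c+r)|D|}\,dr .
\]
This converts a third-order pole at $r=0$ into poles at $s=3,2,1$ of order one, with residue at $s=3$ equal to $C\varphi_A(a)/2$, which gives (4). For (3) I would use the standard Tauberian relation between the residue at the critical abscissa and Dixmier traces (Connes/Lord--Sukochev--Zanin) for $a|D|^{-2}\mathrm{e}^{-\t_c|D|}$, which corresponds to $\zeta$ at $s\mapsto s-2$, yielding the stated constant $2\t_c/C$. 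For (5), in the bi-regular case the eigenvalues are explicit via Theorem \ref{thmB} and multiplicities grow like $d^{k}$ with eigenvalue linear in $k$. Counting by eigenvalue level and taking ratios, the top level dominates and its weights are uniform cylinder measures. This gives $\varphi_A$ directly, with Karamata-free elementary summation.
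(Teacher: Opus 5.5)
Your pressure characterisation of $\t_c$ and your Mellin argument for (4) match the paper. However, your identification of the limit with $\varphi_A$ in (1) and (2) rests on the wrong mechanism and would not go through.

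An element $S_wS_w^*\in C(\Omega_A)$ acts on $L^2(G_A)$ through the \emph{range} map, and on $G_{\alpha.\beta}$ one has $r(g)=\alpha\,\sigma^{|\beta|-1}(s(g))$. So $S_wS_w^*$ acts on $L^2(G_{\alpha.\beta})$ as follows: it is the identity when $\alpha\in wV_A$; it is a projection onto a sub-cylinder of $C(\beta)$ (in the source coordinate) when $w$ properly extends $\alpha$; and it is $0$ otherwise. In particular, the diagonal coefficients $\langle a\,\mathrm{e}_{(\gamma,\nu,j)},\mathrm{e}_{(\gamma,\nu,j)}\rangle$ are $0$ or $1$ for $|\nu|$ large, not $\mu(C(\nu))$-type weights.

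The cubic pole at $\t_c$ is a product of three simple poles. One is the Laplacian factor $\Tr(\mathrm{e}^{-t\Delta_{C(j)}})$, singular where $r(A(t))=1$. The other two are the word sums over $\alpha$ and $\hat\beta$ weighted by $\mathrm{e}^{-t|\cdot|_c}$. These diverge at the same point only because $L$ is rescaled by $\log\lambda_A/\t_c$. Restricting to $\alpha\in wV_A$ keeps all three poles and multiplies the leading coefficient by $\mathrm{e}^{-\t_c|w|_c}\lambda_Au_{w_{|w|}}=\lambda_A^{-|w|+1}u_{w_{|w|}}$. This factor comes from the Perron--Frobenius data $u,v$ of $A$ itself (Lemma \ref{lem:Potential_est}), and it is where $\varphi_A$ comes from. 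The contributions you treat as leading are the localisation of $a$ inside the source cylinder, governed by $(1-A(t))^{-1}$ and the critical eigendata. These occur only for the finitely many $\alpha$ that are prefixes of $w$, so they lose the $\alpha$-pole and are $O((t-\t_c)^{-2})$ (Corollary \ref{cortoLaplacian_est} and Theorem \ref{thm:HeatTrwitha}).

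This is not a harmless reorganisation. The state you predict is the conformal measure of $-\t_c\lambda_AF_A$ composed with the conditional expectation. That measure is $\lambda_A$-conformal only when $F_A$ is constant, i.e.\ when $A$ is out-regular (Proposition \ref{prop:sA2}). Otherwise your candidate is not a gauge-KMS state at all, so neither mixing in $\mu_\gamma(C(\nu))$ weights nor invoking uniqueness of the gauge KMS state can turn it into $\varphi_A$.

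Item (1) is simpler than your sketch suggests. $P_D$ is exactly the projection onto $\mathcal{F}_A$, so $\Tr(P_Da\mathrm{e}^{-tD})=\sum_{\gamma\in\tilde V_A}\mathrm{e}^{-t|\gamma|_c}\langle a\mathrm{e}_\gamma,\mathrm{e}_\gamma\rangle$ involves neither $\Delta$ nor $A(t)$. Its poles come from $(1-\mathrm{e}^{-t\log\lambda_A/\t_c}A)^{-1}$, and the limit is read off from $u,v$ (Proposition \ref{prop:KMS}).

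In (3), set $T=|D|^{-2}\mathrm{e}^{-\t_c|D|}$. The Dixmier trace of $aT$ is $\lim_{t\to0^+}t\Tr(aT^{1+t})$, and $T^{1+t}$ contains $\mathrm{e}^{-(1+t)\t_c|D|}$. Writing $t\Tr(aT^{1+t})=\frac{t}{\Gamma(2+2t)}\int_0^\infty x^{1+2t}\Tr(a\mathrm{e}^{-(\t_c+x+t\t_c)|D|})\,\mathrm{d}x$, it is the shift $t\t_c$ that produces the factor $\t_c^{-1}$. Reading the Dixmier trace off $\zeta_D$ ``at $s\mapsto s-2$'' would give $\frac{C}{2}\varphi_A(a)$ instead of $\frac{C}{2\t_c}\varphi_A(a)$.

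Your direct level-by-level count for (5) is a viable, more elementary alternative to the paper's Perron inversion argument in the bi-regular case. It too must localise via $r(\gamma)\in wV_A$. Also, the top level carries only a fraction $(d-1)/d$ of $\mathcal{N}(\theta)$, so you need a level-wise ratio argument rather than top-level dominance.
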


For all the statements of Theorem \ref{thmC}, the reader can find more refined statements concerning asymptotic behaviour in the body of the text. Item (1) of Theorem \ref{thmC} is based on the computation of the KMS-state of the Heisenberg flow associated with $D$, see Proposition \ref{prop:KMS}. Item (2) is based on the heat trace asymptotics of $\Tr(\mathrm{e}^{-t|D|})$ and its meromorphic extension studied in Subsection \ref{heattracegroupdopoad}. In particular, by the local heat trace asymptotics of $\Tr(a\mathrm{e}^{-t|D|})$ in Theorem \ref{thm:HeatTrwitha} we see that $\Tr(a\mathrm{e}^{-t|D|})$ and $\varphi_A(a)\Tr(\mathrm{e}^{-t|D|})$ coincide at leading order. Item (3) can be found as Theorem \ref{thm:Dixmier} in the body of the text. Item (4) follows from Item (2) and (3) as follows. Using that the order of the pole of $\Tr(\mathrm{e}^{-t|D|})$ at $t=\t_c$ is  $3$ (see item (3)) we have from item (2) that $\Tr(a\mathrm{e}^{-(t+t_c)|D|})=C\varphi_A(a)t^{-3}+O(t^{-2})$, and item (4) follows from standard techniques for Mellin transforms \cite[Proposition 5.1]{gs}.

Item (5) of Theorem \ref{thmC} is based on a sharp Weyl law for $|D|$, which we prove in Theorem \ref{thm:Weyl_law}. If all the row-sums and column-sums of $A$ are equal to $d\geq 2$, we show that the counting function 
$$\mathcal{N}(\theta):=\Tr(\chi_{[0,\theta]}(|D|))=\#\{\theta_j\leq \theta:j\geq 0\},$$ 
for the eigenvalues $\theta_j$ of $|D|$ (non-decreasing order, counting multiplicity) satisfies the asymptotics
$$
\mathcal{N}(\theta)=c_{d,N}\theta^2 d^{\lfloor \frac{\theta N-1}{d-1}\rfloor}+O(\theta \mathrm{e}^{\theta\t_c}),\quad\mbox{as}\quad \theta\to \infty,
$$
where $\lfloor x\rfloor\in \Z$ denotes the floor function of $x\in \R$, and 
$$c_{d,N}=\frac{N^3}{2(d-1)d^{2}}.$$
The proof of Theorem \ref{thm:Weyl_law} is built on us being able to explicitly compute the heat trace $\Tr(\mathrm{e}^{-t|D|})$, from which we can extract the leading terms of its Laplace inverse transform. With some further work in Lemma \ref{thm:localWeyl_law}, the proof of Theorem \ref{thm:Weyl_law} extends to a local Weyl law describing the asymptotics of $\Tr(a\chi_{[0,\theta]}(|D|))$, for $a\in \LC$.

\begin{remark}
\label{connestrace}
We note that Items (1)--(4) of Theorem \ref{thmC} can be interpreted as different incarnations of Connes' trace theorem \cite{connes88,connesredbook}. In its original form, somewhat simplified, it states that if $M$ is an $n$-dimensional compact Riemannian manifold with Laplace--Beltrami operator $\Delta$, then for any Dixmier trace $\Tr_\omega$ it holds that 
\begin{equation}
\label{ctfafdom} 
\Tr_\omega(a(1+\Delta)^{-n/2})=\frac{1}{n2^{n-1}\pi^{n/2}\Gamma(n/2)} \int_M a\, \mathrm{d}V.
\end{equation}
The reader can find more details in the textbook \cite{LSZ}. Item (3) of Theorem \ref{thmC} is a clear analogue of \eqref{ctfafdom} for Cuntz--Krieger algebras, where the role of integration is played by the KMS-state $\varphi_A$.
\end{remark}

\begin{remark}
\label{qedisc}
We note that Item (5) of Theorem \ref{thmC} has an interpretation in terms of quantum ergodicity. Classical quantum ergodicity, as surveyed in \cite{zelditchqe}, concerns the limiting behaviour of eigenfunctions of the Laplacian on a compact manifold. More precisely, if $M$ is a compact Riemannian manifold with Hodge Laplacian $\Delta$ and associated ON-basis $(f_n)_{n=1}^\infty\subseteq L^2(M)$ ordered by non-decreasing eigenvalues of $\Delta$, then the microlocal states 
$$\rho_n(a):=\langle f_n,Op(a)f_n\rangle_{L^2(M)}, \quad a\in C^\infty(S^*M),$$
converge along a density one subsequence to the Liouville measure on $S^*M$. For more details, see \cite[Theorem 1]{zelditchqe} and further references therein.

In the case at hand, we note that Item (5) of Theorem \ref{thmC} implies that if $(e_n)_{n=1}^\infty\subseteq L^2(G_A)$ is an enumeration of the eigenbasis in Subsection \ref{subsec:ham}, ordered by non-decreasing singular values $(\mu_n)_{n=1}^\infty$ of $D$, then it holds that 
$$\lim_{\theta\to \infty}\frac{1}{\mathcal{N}(\theta)}\sum_{\mu_n\leq \theta} \langle e_n, ae_n\rangle_{L^2(G_A,\mu_{G_A})}=\varphi_A(a).$$
Therefore, we arrive at an averaged form of quantum ergodicity where the sequence of states 
$$\rho_n(a)=\langle e_n, ae_n\rangle_{L^2(G_A)}, \; n=1,2,\ldots.$$
converge in Ces{\`a}ro mean along a subsequence. We prove in Proposition \ref{aldjnadljn} that $\rho_n$ does not converge along any density one subsequence, so we cannot hope for much better than Ces{\`a}ro convergence of $(\rho_n)_n$. 
\end{remark}

\begin{remark}
We note that there is difference in the orders appearing in item (3) and (4) of Theorem \ref{thmC}. Comparing the two, we have 
$$ \mathrm{Res}_{s=3}\mathrm{Tr}(a|D|^{-s}\mathrm{e}^{-\t_c |D|})=\t_c\Tr_{\omega}(a |D|^{-2}\mathrm{e}^{-\t_c |D|}).$$
For an operator with a classical Weyl law, i.e. eigenvalues behaving like $\lambda_j\sim j^\alpha$, the residue on the left hand side would be taken at the order appearing in the right hand side. 

To explain this discrepancy between the orders, we consider the model case that $D_0$ is a diagonal operator on $\ell^2(\N_{>1})$ with eigenvalues $\lambda_n=\log (cn)-2\log\log(n)$ for some $c>0$, so $X_0:=\mathrm{e}^{D_0}$ has eigenvalues $\mathrm{e}^{\lambda_n}=c\frac{n}{\log(n)^2}$. The operator $X_0$ has a more classical, yet still non-classical, Weyl law $\mathcal{N}_{X_0}(\theta)\sim\frac{\theta}{c}(\log(\theta))^2$. We have that $|D_0|^{-2}\mathrm{e}^{-D_0}$ has eigenvalues behaving like $c^{-1}n^{-1}+O\left(\frac{\log \log(n)}{n\log(n)}\right)$, so $|D_0|^{-2}\mathrm{e}^{-D_0}$ belongs to the Dixmier--Macaev ideal $\mathcal{M}_{1,\infty}$ and $\Tr_\omega(|D_0|^{-2}\mathrm{e}^{-D_0})=c^{-1}$ for any $\omega$. Note that the correct interpretation of $t_c$ in this model case is $t_c=1$.

On the other hand, 
$$\zeta(s):=\Tr(D_0^{-s}\mathrm{e}^{-D_0})=\sum_{n=2}^\infty \left(\log(cn)-2\log(\log(n))\right)^{-s}\frac{\log(n)^2}{n},$$
converges for $\mathrm{Re}(s)>3$. We can determine the pole structure from noting that $\Gamma(s)\zeta(s)$ is the Mellin transform of 
$$h(t)=\sum_{n=2}^\infty \mathrm{e}^{-(t+1)\lambda_n}=c^{-t-1}\sum_{n=2}^\infty\frac{\log(n)^{2(t+1)}}{n^{t+1}}$$ 
We see that as $t\to 0$, $h(t)=c^{-1}h_0(t)+O(t^{-2})$ where
\begin{align*}
h_0(t)=&\int_1^\infty \frac{\log(x)^{2(t+1)}}{x^{t+1}}\,\mathrm{d}x=\int_0^\infty u^{2(t+1)}\mathrm{e}^{-tu}\,\mathrm{d}u=\\
=&t^{-2t-3}\int_0^\infty z^{2(t+1)}\mathrm{e}^{-z}\,\mathrm{d}z=t^{-2t-3}\Gamma(2t+3).
\end{align*}
In the first equality we made the substitution $u=\log(x)$ and in the second equality we made the substitution $z=tu$. In particular, $h(t)=2c^{-1}t^{-3}+O(t^{-2})$ as $t\to 0$ and from standard techniques for Mellin transforms \cite[Proposition 5.1]{gs} we conclude
$$\mathrm{Res}_{s=3}\zeta(s)=c^{-1}=\Tr_\omega(|D_0|^{-2}\mathrm{e}^{-D_0}).$$ 
\end{remark}

\begin{problema}
For the reasons discussed in Remark \ref{qedisc}, we believe it is an interesting problem to prove a Weyl law similar to Theorem \ref{thm:Weyl_law}, and a local Weyl law similar to Lemma \ref{thm:localWeyl_law}, for general $A$.
\end{problema}

\begin{problema}
The next natural step after computing heat trace asymptotics and studying the leading term behaviour of the spectral geometry as in Theorem \ref{thmC}, would be to study the pole structure in heat traces $\Tr(T\mathrm{e}^{-t|D|})$. This is especially interesting for $T$ being a noncommutative differential form. A remaining open problem is to understand the JLO-cocycle \cite{jlopaper} of $(\LC,L^2(G_A),D)$ with the hope of relating it to Connes--Moscovici's local index theorem in the scaling limit $t\to \t_c^+$. Due to the fundamental role of KMS-states in the JLO-cocycle \cite{jlopaperKMS}, results such as Theorem \ref{thmC} are important first steps.
\end{problema}

\subsection*{Acknowledgements} 
The second listed author was supported by the Swedish Research Council Grant VR 2025-03923 and the Knut and Alice Wallenberg Foundation KAW 2024.0293. The authors would like to thank Dimitra Eleftheriou, Sabrina Kombrink, Bram Mesland, Adam Skalski, and Efthymios Sofos for valuable discussions.

\section{Preliminaries}

In this section we will recall the relevant background material. First, we overview topological Markov chains, and the natural metric measure space structure on the associated subshift of finite type, in Subsection \ref{topmark}. In the succeeding subsection \ref{subsec:drgroup} we recall the construction of the Deaconu--Renault groupoid describing the dynamics of a topological Markov chain. The associated $C^*$-algebra, the Cuntz--Krieger algebra, and its gauge action and KMS-state are discussed in Subsection \ref{subsec:dkal}. All of this material is combined into the spectral triple $(\LC,L^2(G_A),D)$ that we recall in Subsection \ref{subsec:ham} from \cite{GGM25}.

\subsection{Topological Markov chains}
\label{topmark}
Let $A\in M_N(\{0,1\})$ be a primitive matrix. We denote by $V_A^k$, for $k\geq 0$, the collection of admissible words of length $k$, where $V_A^0$ contains only the empty word ${\o}$, and define $$V_A:= \bigsqcup_k V_A^k.$$ The length $k$ of $\alpha\in V_A^k$ will be denoted by $|\alpha|$. 

Also, we equip the set $\{1,\ldots , N\}^{\mathbb N}$ with the product topology and consider the closed subset $$\Omega_A := \{x=(x_n)_{n\in \mathbb N} \in \{1,\ldots , N\}^{\mathbb N}: A_{x_n,x_{n+1}}=1,\, n\in \mathbb N\}.$$ A basis of open compact sets for the topology on $\Omega_A$ is given by the cylinder sets associated to finite words $\alpha = \alpha_1\ldots \alpha_n$ defined as $$C(\alpha):= \{x\in \Omega_A: x_i = \alpha_i,\,\, \text{for } 1\leq i \leq n\}.$$ The convention is that $C({\o}):= \Omega_A.$ Further, the dynamics on $\Omega_A$ is given by the left shift map $\sigma: \Omega_A \to \Omega_A$, which is a local homeomorphism. The pair $(\Omega_A,\sigma)$ is called a topological Markov chain. 

Given $\lambda >1$, we equip $\Omega_A$ with the ultrametric
$$
d(x,y):=\lambda^{-\inf \{n-1: x_n\neq y_n\}},
$$
with the convention that $\inf \varnothing = \infty$. Observe that for every $x\in \Omega_A$, the $d$-ball
$
B(x,\lambda^{-n})=C(x_1\ldots x_n).
$
Also, for $n>1$ the shift map restricts to a homeomorphism 
$B(x,\lambda^{-n})\to B(\sigma(x),\lambda^{-n+1}).$ 
The well-known fact that the topological Markov chain $(\Omega_A,\sigma)$ is a locally expanding dynamical system now follows. 

The canonical $\sigma$-invariant measure on $(\Omega_A,\sigma)$ is the so-called Parry measure. Let $\lambda_{A}>1$ denote the Perron--Frobenius eigenvalue of $A$, $u=(u_j)_{j=1}^N$ be the corresponding eigenvector and $v=(v_j)_{j=1}^N$ be the Perron--Frobenius eigenvector of $A^T$. Also, one can normalise $u$ so that $\sum_{j=1}^{N}u_j=1.$ This normalisation makes $u$ a probability distribution on $\{1,\ldots,N\}$. We also normalise $v$ so that $u\cdot v =1$. We denote this distribution by $p:=(p_j=u_jv_j)_{j=1}^{N}$. In addition, we have the stochastic matrix $P\in M_N([0,1])$,
\begin{equation}
\label{transprob}
P_{i,j}:=\frac{A_{i,j}u_j}{\lambda_{A}u_i},
\end{equation}
which measures the probability of transitioning from a vertex $i$ to $j$. Its important feature is that $p$ is stationary for $P$, i.e. $pP=p$. This gives rise to the $\sigma$-invariant Parry measure $\mu$ defined on cylinder sets $C(\alpha)$ for $\alpha=\alpha_1\ldots \alpha_n \neq {\o}$ by 
\begin{equation}
\label{eq:Parrymeas}
\mu(C(\alpha)):=p_{\alpha_1}P_{\alpha_1,\alpha_2}\ldots P_{\alpha_{n-1},\alpha_n}=\frac{v_{\alpha_1}u_{\alpha_n}}{\lambda_{A}^{n-1}}.
\end{equation}
It is well-known that $\mu$ is ergodic and its measure entropy $\ent_{\mu}$ is maximal, i.e. it coincides with the topological entropy $\log \lambda_A>0$. Moreover, there is some $C\geq 1$ so that for every $x\in \Omega_A$ and $0\leq r \leq 1$,
\begin{equation}
\label{eq:Ahlfors_reg}
C^{-1}r^{\df}\leq \mu(B(x,r))\leq Cr^{\df}, \quad\mbox{for}\quad \df=\log_{\lambda}(\lambda_A).
\end{equation}
Hence, $\mu$ is Ahlfors $\df$-regular. The exponent $\df>0$ is the Hausdorff dimension of $(\Omega_A,d).$ 

To obtain the conformal version of $\mu$, consider the continuous function $h:\Omega_{A}\to (0,\infty)$ defined as $h(x):=v_{x_1}$. Then, we can define the Borel probability measure 
$$\d \mu_c:=\frac{1}{h}\d \mu.$$ 
In particular, we have that 
$$\mu_c(C(\alpha))=\frac{u_{\alpha_n}}{\lambda_{A}^{n-1}}.$$ 
The key property of $\mu_c$ is that it is $\lambda_A$-conformal, and so for all $0\leq k\leq |\alpha|$ we have 
\begin{equation}
\label{eq:conformal}
\mu_c(\sigma^{k}(C(\alpha)))=\lambda_{A}^k\mu_c(C(\alpha)).
\end{equation}
Moreover, $\mu_c$ is a $\lambda_{A}$-eigenvector of the dual of the transfer map $L:C(\Omega_{A})\to C(\Omega_{A})$, $$\mathcal{L}f(x):=\sum_{\sigma(y)=x} f(y).$$

\subsubsection*{Notation}

\begin{enumerate}
\item For $\beta=\beta_1\ldots \beta_n \in V_A\setminus \{{\o}\}$ denote by $\hat{\beta}:=\beta_1\ldots \beta_{n-1}$ its parent. For $n=1$, we define $\beta_0={\o}.$
\item For $i\in \{1,\ldots, N\}$ set 
$$\mathfrak{C}(i):=\{j\in \{1,2,\ldots, N\}: \, A_{ij}=1\}, \quad\mbox{and}\quad \mathfrak{c}(i):=\#\mathfrak{C}(i).$$
\item By $\beta V_{A}$ we denote the set of admissible finite words starting with $\beta$.
\end{enumerate}

\subsection{Deaconu--Renault groupoid}
\label{subsec:drgroup}

The \'etale groupoid of interest is
$$
G_A=\{(x,n,y)\in \Omega_A\times \mathbb Z \times \Omega_A: \exists k\geq 0\,\, \text{such that}\,\,n+k\geq 0 \,\,\text{and}\,\,  \sigma^{n+k}(x)=\sigma^{k}(y)\}\rightrightarrows \Omega_A.
$$
The groupoid structure is given by the source and range maps 
$$
s(x,n,y):=y,\qquad r(x,n,y):=x,
$$
as well as the partial multiplication and inversion 
$$m((x,n,y),(y,\ell,z))\equiv(x,n,y)(y,\ell,z):=(x,n+\ell,z),\quad (x,n,y)^{-1}:=(y,-n,x).$$ Further, we have the maps $\kappa:G_A\to \mathbb N\cup \{0\}$ and $c:G_A\to \mathbb Z$ defined as 
\begin{align*}
\kappa(x,n,y)&:=\min \left\{k\geq \max \{0,-n\}: \sigma^{n+k}(x)=\sigma^k(y)\right\},\\
c(x,n,y)&:=n.
\end{align*}
Then, $G_A$ is equipped with the coarsest topology making the maps $s,r,\kappa,c$ continuous. Specifically, $G_{A}$ is a locally compact, totally disconnected \'etale groupoid. Moreover, the map $c$ is a groupoid homomorphism and the formula
\begin{equation}
\alpha_{z}(f)(x,n,y):=z^{n}f(x,n,y),\quad f\in C_{c}(G_{A}), \,\, z\in\mathbb{T},\end{equation}
defines an action of the circle $\mathbb{T}$ by $*$-automorphisms on the reduced $C^{*}$-algebra $C^{*}_{r}(G_{A})$, called the gauge action. A basis for the topology of $G_{A}$ is given by the bisections indexed by admissible finite words $\alpha$, $\beta$
$$C(\alpha,\beta):=\{(x,|\alpha|-|\beta|,y): x\in C(\alpha), y\in C(\beta), \sigma^{|\alpha|}(x)=\sigma^{|\beta|}(y)\}.$$

A better understanding of $G_A$ comes from the decomposition into finer bisections, namely 
\begin{equation}\label{eq:decomp}
G_A=\bigsqcup_{\gamma \in I_A} G_{\gamma}.
\end{equation}
The index set $I_A$ is defined as 
\begin{equation*}
I_A:=\{\gamma=\alpha.\beta\in V_A\times (V_A\setminus \{{\o}\}):\; \alpha={\o}\,\, \mbox{or}\,\, (\alpha\neq {\o}, A_{\alpha_{|\alpha|},\beta_{|\beta|}}=1, \alpha_{|\alpha|}\neq \beta_{|\beta|-1})\},
\end{equation*}
and the bisection $G_{\gamma}$ is given by
\begin{equation*}
G_{\gamma}:=\left\{(x,n,y)\in G_A: \begin{matrix} x=\alpha\sigma^{|\beta|-1}(y), \; y=\beta\sigma^{|\beta|}(y),\; \mbox{and}\\ 
n=|\alpha|-|\beta|+1, \; \kappa(x,n,y)=|\beta|-1\end{matrix} \right\}.
\end{equation*}
On $I_A$ we have range and source maps into $V_A$. For $\gamma=\alpha.\beta\in I_A$, we write
$$r(\gamma):=\alpha\quad\mbox{and}\quad s(\gamma):=\beta.$$ By construction,  
$\kappa|_{G_\gamma}=|s(\gamma)|-1 \quad\mbox{and}\quad c|_{G_\gamma}=|r(\gamma)|-|s(\gamma)|+1.$
Moreover, for each $\gamma=\alpha.\beta \in I_A$ the clopen set $G_\gamma$ is a subset of $C(\alpha \beta_{|\beta|},\beta)$, and hence a bisection as well. The restricted source map 
$$s_{\gamma}:=s|_{G_\gamma}:G_{\gamma}\to C(s(\gamma))$$
is a homeomorphism.

\begin{remark}\label{rem: char_functions}
For $\alpha \in V_A^1$ let $\chi_{\alpha}$ be the characteristic function of $\{ (x,1,\sigma(x)): x\in C(\alpha)\}.$ For $\alpha = {\o}$ we set $\chi_{{\o}}=1$. Then, for $\alpha=\alpha_1\ldots \alpha_n$ we write $\chi_{\alpha}:= \chi_{\alpha_1}\star \ldots \star \chi_{\alpha_n}.$ Moreover, for every $\alpha.\beta \in I_A$ and $\nu \in V_A$ such that $\beta \nu \in V_A$ we have $$\chi_{\alpha \beta_{|\beta|} \nu} \star \chi_{\beta \nu}^*=\chi_{s^{-1}_{\alpha.\beta} (C(\beta \nu))}.$$ In particular, for $\nu = {\o}$ the function $\chi_{\alpha \beta_{|\beta|}}\star \chi_{\beta}^*$ is the characteristic function of $G_{\alpha.\beta}\subset G_A.$
\end{remark}

\subsection{Cuntz--Krieger algebras}
\label{subsec:dkal}

The Cuntz--Krieger algebra $O_{A}$ is the universal $C^{*}$-algebra generated by $N$ elements $S_{i}$, $i\in\{1,\cdots, N\}$, subject to the relations
\begin{equation}\label{eq: CK-relations} 
\sum_{i=1}^{N}S_{i}S_{i}^{*}=1 \quad \mbox{and}\quad  S_{i}^{*}S_{k}=\delta_{i,k}\sum_{j=1}^{N}A_{ij}S_{j}S_{j}^{*}.
\end{equation}
The universal $C^{*}$-algebra $O_{A}$ carries an action of the group $\mathbb{T}$ by $*$-automorphisms, known as the \emph{gauge action} and determined by the formula 
\begin{equation}
\label{eq:gauge-action}
\alpha_{z}(S_{i}):=zS_{i},\quad z\in\mathbb{T},\,\,i\in\{1,\cdots, N\}.
\end{equation} 

The $C^*$-algebra $O_A$ is canonically isomorphic to $C^*_r(G_A)$ through the map 
$$S_i\mapsto \chi_i.$$ 
This isomorphism is also equivariant for the respective $\mathbb T$-actions. Under this isomorphism, the $*$-algebra $\mathbb C^* [S_i: 1\leq 1 \leq N]$ is identified with the convolution $*$-algebra $C_c^{\infty}(G_A)$.

Moreover, the canonical KMS-state $\varphi_A:O_A\to \mathbb C$ at inverse temperature $\beta=\log \lambda_A$ is given, for every $\alpha,\beta\in V_A\setminus \{{\o}\}$, by 
$$\varphi_A(S_{\alpha}S^*_{\beta})=\delta_{\alpha,\beta} \lambda_{A}^{-|\alpha|+1}u_{\alpha_{|\alpha|}}.$$ 
We let $L^2(O_A,\varphi_A)$ denote the GNS-representation of $O_A$ with respect to $\varphi_A$.

\subsection{Hamiltonians on Deaconu--Renault groupoids}
\label{subsec:ham}

We now equip $G_A$ with the conformal structure lifted from $\Omega_A$. For every $\gamma \in I_A$, we define the pull-back metric $d_{\gamma}$ on $G_{\gamma}$ by 
$$d_{\gamma}(g_1,g_2):=d(s_{\gamma}(g_1),s_{\gamma}(g_2)).$$ 
Hence, the restricted source map $s_\gamma:G_\gamma\to C(s(\gamma))$ is an isometry. Then, we define an extended metric $d_{G_{A}}$ on $G_A$, generating the topology on $G_A$, by 
$$d_{G_A}(g_1,g_2):=
\begin{cases}
d_{\gamma}(g_1,g_2), & \text{if  for some $\gamma\in I_A$ both}\; g_1,g_2\in G_{\gamma}\\
\infty, & \text{otherwise}
\end{cases}.
$$

We define the pull-back finite Borel measure $\mu_{\gamma}:=s_\gamma^*\mu_c$ on $G_{\gamma}$. For an open $B\subset G_{\gamma}$, $\mu_\gamma$ is defined by 
$$\mu_{\gamma}(B)= \mu_c(s_{\gamma}(B)).$$ 
The collection $\{\mu_{\gamma}\}_{\gamma\in I_A}$ defines a Borel measure $\mu_{G_A}$ on $G_A$. Now since each $s_{\gamma}(G_{\gamma})$ is clopen in $\Omega_A$, and for the latter $\mu_c$ is Ahlfors $\df$-regular, we obtain that for every $\gamma \in I_A$, the metric-measure space $(G_{\gamma},d_{\gamma},\mu_{\gamma})$ is Ahlfors $\df$-regular with $\df= \log_{\lambda} \lambda_A.$

Further, we have that 
$$L^2(G_A, \mu_{G_A})=\bigoplus_{\gamma\in I_A}L^2(G_{\gamma},\mu_{\gamma}).$$ 
Therefore, we can define the positive, self-adjoint operator $\Delta$ acting on $L^2(G_A, \mu_{G_A})$ as 
$$\Delta=\bigoplus_{\gamma\in I_A}\Delta_{\gamma},$$ 
where each $\Delta_{\gamma}$ is the log-Laplacian on $L^2(G_{\gamma},\mu_{\gamma})$. We have $\ker \Delta_{\gamma} = \mathbb C \chi_{G_{\gamma}}$ and $\Delta_{\gamma}$ is essentially self-adjoint on locally constant functions $f:G_{\gamma}\to \mathbb C$ for which $$\Delta_{\gamma} f(g) = \int_{G_{\gamma}} \frac{f(g)-f(h)}{d_{\gamma}(g,h)^{\df}}\d \mu_{\gamma} (h).$$ From the definition of $d_{\gamma}$ and $\mu_{\gamma}$, if $\gamma=\alpha.\beta$ then $\Delta_{\gamma}=\Delta_{C(\beta)}$, where 
$$\Delta_{C(\beta)}f(x)=\int_{C(\beta)} \frac{f(x)-f(y)}{d (x,y)^{\df}}\rd\mu_c(y).$$ 
Also, the space $\LC$ is a core for $\Delta$. Observe that for $f\in \LC$ the operator $\Delta$ admits the integral representation 
\begin{equation}\label{eq:Int_rep}
\Delta f(g)=\int_{G_A} \frac{f(g)-f(h)}{d_{G_A}(g,h)^{\df}} \d \mu_{G_A}(h).
\end{equation}

The measure $\mu_{G_A}$ relates to the KMS-state $\varphi_A$ via the fact that, for $f\in C_c(G_A)$, we have 
\begin{equation}\label{eq:KMS}
\varphi_A(f)=\int_{\Omega_A} f(x,0,x) \d \mu_{G_A} (x).
\end{equation}
This identity identifies $L^2(O_A,\varphi_A)$ with $L^2(G_A,\mu_{G_A})$ under the isomorphism $O_A\cong C^*_r(G_A)$. 

Further, $\Delta$ admits a spectral decomposition with eigenfunctions forming an ON-basis $(\mathrm{e}_{\gamma},\mathrm{e}_{(\gamma,\nu,j)})_{(\gamma,\nu,j)\in \mathfrak{I}_A}\subset \LC$  for $L^2(G_A,\mu_{G_A})$. Here $\mathfrak{I}_A$ is an appropriate index set. Namely, 
$$\mathrm{e}_{\gamma}:=\mu_{\gamma}(G_{\gamma})^{-1/2} \chi_{G_{\gamma}}.$$ 
Also, each $\mathrm{e}_{(\gamma,\nu,j)}$ is a Haar wavelet supported on $s^{-1}_{\gamma}(C(\nu))$, with $\nu \in V_A \setminus \{{\o}\}$ starting with $s(\gamma)\neq {\o},$ and $j$ lies in $J_{s(\gamma)}(\nu)\subseteq \{1,\ldots, N\}$; a non-empty index set such that 
$$\# J_{s(\gamma)}(\nu)= \# \{i\in \{1,\ldots, N\}:A_{\nu_{|\nu|},i}=1\}-1\geq 1.$$ 
Although in \cite{GGM25} these eigenfunctions are described algorithmically with explicit formulas only in special cases, in Proposition \ref{prop:eigenfunctions} of the current paper we obtain explicit formulas for general $A$. 

Moreover, we have that $\ker(\Delta)$ is the closed linear span of $(\mathrm{e}_{\gamma})_{\gamma\in I_A}$ and for $(\gamma,\nu,j)\in \mathfrak{I}_A$, 
$$\Delta \mathrm{e}_{(\gamma,\nu,j)}=\lambda_{s(\gamma)}^A(\nu)\mathrm{e}_{(\gamma,\nu,j)},$$ where $\lambda^A_{s(\gamma)}(s(\gamma))=\lambda_{A}u_{s(\gamma)_{|s(\gamma)|}}$, and if $\nu\in s(\gamma)V_A \setminus \{s(\gamma)\}$ then
\begin{equation}
\label{eq:eigenvalues}
\lambda_{s(\gamma)}^A(\nu)=\lambda_{A}u_{\nu_{|\nu|}}+\lambda_{A}\sum_{k=0}^{|\nu|-|s(\gamma)|-1} u_{\nu_{|s(\gamma)|+k}}(1-P_{\nu_{|s(\gamma)|+k},\nu_{|s(\gamma)|+k+1}}).
\end{equation}

For the sequel it will be important to recall how $O_A$ acts on the eigenfunctions of $\Delta$. 

\begin{thm}[{\cite[Theorem 3.9]{GGM25}}]
\label{thm:O_A_action}
Let arbitrary $i\in \{1,\ldots ,N\}$, $\gamma=\alpha.\beta \in I_A$ and ${e}_{(\gamma,\nu,j)}$. If 
\begin{enumerate}
\item $\alpha \neq {\o}$ or $|\beta|=1$, and 
\vspace{0.2cm}
\begin{enumerate}
\item $i\alpha.\beta \not\in I_A$, then $S_i\mathrm{e}_{(\gamma,\nu,j)}=0$ and $S_i\mathrm{e}_{\gamma}=0$;
\item $i\alpha.\beta \in I_A$, then $S_i\mathrm{e}_{(\gamma,\nu,j)}=\mathrm{e}_{(i\gamma,\nu,j)}$ and $S_i\mathrm{e}_{\gamma}=\mathrm{e}_{i\gamma}.$
\end{enumerate}
\vspace{0.2cm}

\item $\alpha={\o}, |\beta|\geq 2$, $\beta_{|\beta|-1}\neq i$, and
\vspace{0.2cm}

\begin{enumerate}
\item $i.\beta \not\in I_A$, then $S_i\mathrm{e}_{(\gamma,\nu,j)}=0$ and $S_i\mathrm{e}_{\gamma}=0$;
\item $i.\beta \in I_A$, then $S_i\mathrm{e}_{(\gamma,\nu,j)}=\mathrm{e}_{(i\gamma,\nu,j)}$ and $S_i\mathrm{e}_{\gamma}=\mathrm{e}_{i\gamma}$.
\end{enumerate}
\vspace{0.2cm}

\item $\alpha={\o}, |\beta|\geq 2$, $\beta_{|\beta|-1}=i$, 
\vspace{0.2cm}
then $$S_i\mathrm{e}_{(\gamma,\nu,j)}=\mathrm{e}_{(\hat{\gamma},\nu,j)},\qquad S_i\mathrm{e}_{\gamma}=\mu_c(C(\beta))^{-1/2}\chi_{s^{-1}_{\hat{\gamma}}(C(\beta))}.$$ Here $\hat{\gamma}:={\o}.\hat{\beta},$ and if $C(\hat{\beta})=C(\beta)$ then $S_i\mathrm{e}_{\gamma}=\mathrm{e}_{\hat{\gamma}}.$
\end{enumerate}
\end{thm}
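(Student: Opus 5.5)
The plan is to work inside the groupoid model. We use the isomorphism $O_A\cong C^*_r(G_A)$ with $S_i\mapsto \chi_i$, under which $S_i$ acts on $L^2(G_A,\mu_{G_A})$ by left convolution:
$$(\chi_i\star f)(x,n,y)=\chi_i(x,1,\sigma(x))\,f(\sigma(x),n-1,y).$$
Concretely, $S_i f(g)$ equals $f$ evaluated at $g$ with its range point shifted, provided $x\in C(i)$, and vanishes otherwise. This map sends the bisection $G_{\alpha.\beta}$ bijectively onto the set $\{(ix',n+1,y)\}$ for $(x',n,y)\in G_{\alpha.\beta}$ with $A_{i,x'_1}=1$. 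We also use the convention $\alpha_1$ for $x'_1$ when $\alpha\neq\o$, and $\beta_{|\beta|}$ (the letter at which $x'$ and $y$ begin to agree) when $\alpha=\o$.

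The first step is to determine which bisection the translated set lands in. Take $g=(x',n,y)\in G_\gamma$ with $\gamma=\alpha.\beta$. Then $x'=\alpha\sigma^{|\beta|-1}(y)$, $n=|\alpha|-|\beta|+1$ and $\kappa=|\beta|-1$. For $ix'$ we have $ix'=(i\alpha)\sigma^{|\beta|-1}(y)$, so the candidate index is $i\alpha.\beta$, and the only issue is whether $\kappa$ stays equal to $|\beta|-1$.
\begin{itemize}
\item In case (1), either $\alpha\neq\o$ or $|\beta|=1$. Here $\kappa$ is unchanged: minimality was witnessed by the condition $\alpha_{|\alpha|}\neq\beta_{|\beta|-1}$, and this condition is untouched by prepending $i$ (when $|\beta|=1$ there is nothing to check). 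Hence $g\mapsto(ix',n+1,y)$ maps $G_\gamma$ into $G_{i\alpha.\beta}$ exactly when $i\alpha.\beta\in I_A$. When $i\alpha.\beta\notin I_A$, we have $A_{i,\alpha_1}=0$ (or the relevant letter is inadmissible), so $\chi_i\star f=0$ for $f$ supported on $G_\gamma$.
\item In case (2), $\alpha=\o$ and $|\beta|\ge2$, and the new range word is $i$. The condition $i\neq\beta_{|\beta|-1}$ is precisely the $I_A$-condition, so again $\kappa$ is preserved and we land in $G_{i.\beta}$ or get zero.
\item In case (3), $i=\beta_{|\beta|-1}$. Then $ix'$ and $y$ already agree one step earlier, so $\kappa$ drops to $|\beta|-2$. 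The image lies in $G_{\o.\hat\beta}$ and equals $s^{-1}_{\hat\gamma}(C(\beta))$.
\end{itemize}

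The second step is to check that this translation is compatible with the structures used to define the eigenfunctions. In every case the map $G_\gamma\to G_{\gamma'}$ commutes with the source maps, since $s(ix',n+1,y)=y$. Because $d_\gamma$ and $\mu_\gamma$ are pulled back from $C(s(\gamma))$ via $s_\gamma$, the map is a measure-preserving isometry onto its image, and it is compatible with $\Delta_\gamma=\Delta_{C(s(\gamma))}$. Writing $\mathrm{e}_{(\gamma,\nu,j)}=w_{\nu,j}\circ s_\gamma$, where $w_{\nu,j}$ is the Haar wavelet on $C(\nu)\subset C(s(\gamma))$ depending only on $s(\gamma)$, $\nu$ and $j$, we get
$$S_i\mathrm{e}_{(\gamma,\nu,j)}=w_{\nu,j}\circ s_{\gamma'}.$$
In cases (1) and (2), $s(\gamma')=s(\gamma)$, so this is $\mathrm{e}_{(i\gamma,\nu,j)}$, and likewise $S_i\mathrm{e}_\gamma=\mathrm{e}_{i\gamma}$ since $\mu_{i\gamma}(G_{i\gamma})=\mu_c(C(\beta))$.

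The main obstacle is case (3), because there $s(\hat\gamma)=\hat\beta\neq\beta$. The function $w_{\nu,j}\circ s_{\hat\gamma}$ is supported on $s^{-1}_{\hat\gamma}(C(\beta))\subsetneq G_{\hat\gamma}$ in general. I would show that the wavelet $w_{\nu,j}$ attached to the pair $(\beta,\nu)$ coincides with the one attached to $(\hat\beta,\nu)$ for $\nu\in\beta V_A$. This amounts to showing that the wavelet construction for $C(\hat\beta)$, restricted to subcylinders $C(\nu)$ with $\nu$ extending $\beta$, reproduces that for $C(\beta)$: the Haar wavelets at node $\nu$ are built only from the measures $\mu_c(C(\nu k))$ of the children of $\nu$, and these do not depend on the root cylinder. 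I would verify this from the algorithmic construction in \cite{GGM25}, noting that the index sets satisfy $J_{\hat\beta}(\nu)=J_\beta(\nu)$. This gives $S_i\mathrm{e}_{(\gamma,\nu,j)}=\mathrm{e}_{(\hat\gamma,\nu,j)}$. The formula for $S_i\mathrm{e}_\gamma$ is then immediate: it is $\mu_c(C(\beta))^{-1/2}\chi_{s^{-1}_{\hat\gamma}(C(\beta))}$, which equals $\mathrm{e}_{\hat\gamma}$ when $C(\hat\beta)=C(\beta)$.
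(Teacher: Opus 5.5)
Your proof is correct. The paper gives no argument of its own here, since the statement is quoted from \cite[Theorem 3.9]{GGM25}. So the natural comparison is with the algebraic route suggested by the paper's tools, which it uses for the analogous computation in the proof of Proposition \ref{prop:KMS}.

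Write $\nu=\beta\tilde\nu$. Remark \ref{rem: char_functions} and Proposition \ref{prop:eigenfunctions} give $\mathrm{e}_{(\gamma,\nu,j)}=\sum_k a^{(\nu,j)}_k S_{\alpha\beta_{|\beta|}\tilde\nu\iota_\nu(k)}S^*_{\nu\iota_\nu(k)}$, so $S_i$ simply prepends $i$ to the first word. The Cuntz--Krieger relations kill every term when $A_{i,\alpha_1}=0$ (resp.\ $A_{i,\beta_{|\beta|}}=0$ if $\alpha={\o}$). Otherwise each term is identified as $\chi_{s^{-1}_{\gamma'}(C(\nu\iota_\nu(k)))}$ by Remark \ref{rem: char_functions}, applied as follows:
\begin{itemize}
\item in cases (1)--(2), to $i\alpha.\beta$;
\item in case (3), to ${\o}.\hat\beta$ with extension $\beta_{|\beta|}\tilde\nu\iota_\nu(k)$, using that $i\beta_{|\beta|}=\beta_{|\beta|-1}\beta_{|\beta|}$.
\end{itemize}
Your computation performs the same case analysis one level down, on arrows $(x,n,y)$ and the function $\kappa$; this is the bookkeeping hidden inside Remark \ref{rem: char_functions}. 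It needs the explicit convolution formula for $\chi_i\star f$, whereas the algebraic route only manipulates words. In exchange, yours makes transparent why the arrows fall into $G_{{\o}.\hat\beta}$ in case (3), and why the source-pulled-back metric and measure are preserved.

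The step you defer to \cite{GGM25} is that the wavelet at node $\nu$ does not depend on the root cylinder. This can be closed within the paper. In Proposition \ref{prop:eigenfunctions} the coefficients $a^{(\nu,j)}_k$ involve only $\mu_c(C(\nu))$, the probabilities $P_{\nu_{|\nu|},\iota_\nu(k)}$ and the ordering $\iota_\nu$. So as long as $\iota_\nu$ is fixed once for each $\nu$ (not once for each pair $(\beta,\nu)$), $h_{\nu,j}$ is literally the same function inside $C(\beta)$ and $C(\hat\beta)$.

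It is worth making this convention explicit, because in case (3) $S_i$ does not intertwine $\Delta_\gamma$ with $\Delta_{\hat\gamma}$. The eigenvalue of the wavelet changes from $\lambda^A_\beta(\nu)$ to $\lambda^A_{\hat\beta}(\nu)$. The difference is $\lambda_A u_{\beta_{|\beta|-1}}(1-P_{\beta_{|\beta|-1},\beta_{|\beta|}})$, which vanishes only when $C(\hat\beta)=C(\beta)$. So the theorem is a statement about this particular Haar basis, not about eigenspaces. Accordingly, your remark that the translation is compatible with $\Delta_\gamma$ should be read, in case (3), as intertwining $\Delta_{C(\beta)}$ with its transport to $s^{-1}_{\hat\gamma}(C(\beta))$, not with $\Delta_{\hat\gamma}$.
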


Consider the multiplication operator $M_{\widetilde{L}}$ by the proper continuous function $\widetilde{L}:G_A\to \mathbb N$ defined as
\begin{equation}\label{eq:scale}
\widetilde{L}|_{G_\gamma}=|\gamma|:=|r(\gamma)|+|s(\gamma)|.
\end{equation}
Note that $M_{\widetilde{L}}$ is positive and essentially self-adjoint on ${\rm C_{c}^\infty}(G_A)$.

Further, we introduce the space of finite words with a distinguished ending as 
\begin{equation}\label{eq:Fock}
\tilde{V}_A:=\{\alpha.\beta\in V_A\times \{1,\ldots,N\}: \alpha\beta\in V_A\}=\{\gamma\in I_A: |s(\gamma)|=1\},
\end{equation}
and write $\ell^2(\tilde{V}_A)$ for the associated Hilbert space.
Furthermore, consider the Hilbert space 
$$\mathcal{F}_A:=\bigoplus_{\gamma\in \tilde{V}_A}\ker \Delta_{\gamma}\subset L^2(G_A,\mu_{G_A}),$$ 
and the associated projection $P_A$ from $L^2(G_A,\mu_{G_A})$ onto $\mathcal{F}_A$.

The Hamiltonian is the operator $$\widetilde{D}:= - \Delta + \widetilde{V},\qquad \widetilde{V}:= (2P_A-1)M_{\widetilde{L}}.$$ For $f\in \LC$ we have 
\begin{equation}\label{eq:D}
\widetilde{D}f=
\begin{cases}
-(\Delta+M_{\widetilde{L}})f, &\text{if } f\in \mathcal{F}_A^{\perp}\\
M_{\widetilde{L}}f, &\text{if } f\in \mathcal{F}_A
\end{cases}.
\end{equation}
Also, it is easy to see that $\widetilde{D}=(2P_A-1)(\Delta + M_{\widetilde{L}})$ and hence $|\widetilde{D}|=\Delta + M_{\widetilde{L}}.$ Moreover,

$$\widetilde{D}\mathrm{e}_{\gamma}=
\begin{cases}
-|\gamma|\mathrm{e}_{\gamma}, &\text{if } |s(\gamma)|>1,\\
|\gamma|\mathrm{e}_{\gamma}, &\text{if } |s(\gamma)|=1
\end{cases}.
$$
and 
$$\widetilde{D}\mathrm{e}_{(\gamma,\nu,j)}=-(|\gamma|+\lambda_{s(\gamma)}^A(\nu))\mathrm{e}_{(\gamma,\nu,j)}.$$ We have that $(\LC,L^2(G_A, \mu_{G_A}),\widetilde{D})$ is a spectral triple over $O_A$, whose K-homology class is the image of $[1]\in K_0(O_{A^T})$ under KK-duality. 

\begin{remark}\label{rem:normalise_Ham}
The definition of $\widetilde{D}$ has some flexibility. Namely, given any $\eta >0$ we can define $$\widetilde{D}_{\eta}:=-\Delta + \eta(2P_A-1)M_{\widetilde{L}}.$$ Each $\widetilde{D}_{\eta}$ yields a spectral triple on $O_A$ with the same K-homology class and classical isometry group as that of $\widetilde{D}_1=\widetilde{D}$. Also, the proof of \cite{FGM} carries over to $\widetilde{D}_{\eta}$ and the quantum isometry groups $QISO(\widetilde{D}_{\eta})$ and $QISO(\widetilde{D}_{1})$ coincide, except for finitely many $\eta$ for which we can only show that $QISO(\widetilde{D}_{1})$ is a subgroup of $QISO(\widetilde{D}_{\eta})$. At this point, it is not clear to us though if this is a limitation of the proof in \cite{FGM}, or a quantisation feature. Nevertheless, in the sequel (see Definition \ref{defn:crit_time_Ham}) we will see that the spectral geometry of $O_A$ is richer for a specific $\eta>0$ associated with the thermodynamic formalism of $(\Omega_A,\sigma)$. 
\end{remark}

\section{Statistical properties of the spectral data}
\label{secstatarad}

We will now describe the non-zero eigenvalues $(\lambda_\beta^A(\nu))_{\nu \in \beta V_A}$ of $\Delta_{C(\beta)}$, for $\beta\in V_A\setminus \{\o\}$. As seen from the formula \eqref{eq:eigenvalues}, the behaviour of the eigenvalues is somewhat erratic. Nevertheless, we show, somewhat surprisingly, that if we sample $\nu$ according to an ergodic and $\sigma$-invariant measure on $\Omega_A$,  then $\lambda_\beta^A(\nu)$ will asymptotically behave like a constant multiple of $|\nu|$, where the constant multiple depends on entropy and a potential.

Denote by $\mathcal{M}_{\sigma}$ the space of $\sigma$-invariant Borel probability measures on $\Omega_{A},$ and by $\mathcal{M}_{\sigma,e}\subset \mathcal{M}_{\sigma}$ the ergodic ones. Recall the definition of the transition probabilities $P_{ij}$ from Equation \eqref{transprob}. We define the potential function $F_A:\Omega_{A}\to [0,1)$ given by 
$$F_A(x):=u_{x_1}(1-P_{x_{1}, x_{2}}).$$

\begin{thm}
\label{lem:eigen_asym}
For $n\in \mathbb N$ consider the projection onto the first $n$ terms $\pi_n:\Omega_{A}\to V_{A}^n$. We fix $\tau \in \mathcal{M}_{\sigma,e}$. Then, for $\tau$-almost every $x\in C(\beta)$ we have $$\lim_{n\to \infty} \frac{\lambda_\beta^{A}(\pi_n(x))}{n}=\lambda_{A} \tau(F_A).$$ Moreover, there is some $0<c_0\leq c_1<1$ so that $c_0\leq \tau(F_A)\leq c_1 \lambda_A^{-1}$. 
\end{thm}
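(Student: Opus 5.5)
The plan is to rewrite the eigenvalue \eqref{eq:eigenvalues} as a Birkhoff sum and then apply Birkhoff's ergodic theorem. Fix $\beta$ with $|\beta|=m$, and let $x\in C(\beta)$ and $n>m$. With $\nu=\pi_n(x)$ we have $\nu_{m+k}=x_{m+k}$ and $\nu_{m+k+1}=x_{m+k+1}$. Hence each summand equals
$$u_{x_{m+k}}(1-P_{x_{m+k},x_{m+k+1}})=F_A(\sigma^{m+k-1}(x)),$$
and
$$\lambda^A_\beta(\pi_n(x))=\lambda_A u_{x_n}+\lambda_A\sum_{j=m-1}^{n-2}F_A(\sigma^j(x)).$$

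The boundary terms are harmless. The term $\lambda_A u_{x_n}$ is bounded by $\lambda_A$, and the Birkhoff sum over $j\in\{0,\dots,n-1\}$ differs from the one above by at most $m+1$ terms, each bounded by $1$. So after dividing by $n$ these contribute $O((m+1)/n)\to 0$. Since $F_A$ depends only on the first two coordinates, it is locally constant, hence continuous and bounded, so $F_A\in L^1(\tau)$. Birkhoff's ergodic theorem for the ergodic measure $\tau$ gives
$$\frac1n\sum_{j=0}^{n-1}F_A(\sigma^jx)\to\tau(F_A)$$
for $\tau$-a.e. $x\in\Omega_A$, in particular for $\tau$-a.e. $x\in C(\beta)$. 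If $\tau(C(\beta))=0$ the statement is vacuous. This proves the limit.

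For the bounds, compute $\tau(F_A)$ as an integral of a function of $(x_1,x_2)$:
$$\tau(F_A)=\sum_{i,j:A_{ij}=1}\tau(C(ij))\,u_i(1-P_{ij}).$$

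\emph{Upper bound.} From $\sum_j P_{ij}=1$ we get $1-P_{ij}=\sum_{k\neq j}A_{ik}u_k/(\lambda_Au_i)$, so
$$u_i(1-P_{ij})=\lambda_A^{-1}\sum_{k\neq j}A_{ik}u_k\le \lambda_A^{-1}(1-u_j)\le \lambda_A^{-1}(1-\min_l u_l).$$
Here we use $\sum_k u_k=1$ and $u_l>0$ by primitivity. Summing against the probability weights $\tau(C(ij))$ gives $\tau(F_A)\le c_1\lambda_A^{-1}$ with $c_1=1-\min_l u_l<1$.

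\emph{Lower bound.} This is the main obstacle. It requires that $1-P_{ij}$ be bounded below uniformly, i.e. that $P_{ij}<1$ for every admissible pair $(i,j)$. That holds exactly when every vertex has out-degree $\mathfrak c(i)\ge2$. The paper implicitly assumes this, since $\#J_{s(\gamma)}(\nu)=\mathfrak c(\nu_{|\nu|})-1\ge1$. Granting $\mathfrak c(i)\ge 2$ for all $i$, choose $k\neq j$ with $A_{ik}=1$. Then
$$u_i(1-P_{ij})\ge\lambda_A^{-1}u_k\ge\lambda_A^{-1}\min_l u_l=:c_0'>0,$$
so $\tau(F_A)\ge c_0'$ for every $\tau$. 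Set $c_0:=\min(c_0',c_1\lambda_A^{-1})$. Both $c_0$ and $c_1$ are independent of $\tau$, and $c_0<1$ because $\lambda_A>1$.
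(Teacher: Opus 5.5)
Your proof of the limit is the same as the paper's: you rewrite \eqref{eq:eigenvalues} as a Birkhoff sum of $F_A$ over $j=m-1,\dots,n-2$, discard boundedly many bounded terms, and apply the ergodic theorem. Your upper bound $u_i(1-P_{ij})=\lambda_A^{-1}\sum_{k\neq j}A_{ik}u_k\le\lambda_A^{-1}(1-\min_l u_l)$ is also valid. It is a different but equally good route from the paper's, which uses $u_i\le\lambda_A^{-1}$ from conformality together with $c_1=\max\{1-P_{ij}:A_{ij}=1\}$.

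The gap is in the lower bound. You prove it only under the extra hypothesis that every vertex has out-degree at least $2$. That hypothesis is not in the theorem, and the paper does not assume it implicitly. The condition $\#J_{s(\gamma)}(\nu)\geq 1$ is a property of the index set $\mathfrak{I}_A$: only words $\nu\in\overline{\beta V_A}$, i.e.\ those with $\mathfrak{c}(\nu_{|\nu|})\ge 2$, carry eigenfunctions. It is not a restriction on $A$. The paper works explicitly with deterministic transitions $P_{ij}=1$, for example through the matrix $D_A$ in Lemma \ref{lem:deterministic}. A concrete case is the primitive matrix with rows $(1,1)$ and $(1,0)$: there $P_{21}=1$, so $F_A$ vanishes on $C(21)$ and no pointwise lower bound on $F_A$ can hold.

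The missing idea is to use primitivity along orbits rather than pointwise. Choose $M$ with $A^M>0$. Suppose $\mathfrak{c}(x_1)=\cdots=\mathfrak{c}(x_M)=1$. Then $x_1\cdots x_{M+1}$ would be the only admissible word of length $M+1$ starting with $x_1$, so row $x_1$ of $A^M$ would have exactly one nonzero entry, which is impossible since $N\ge 2$. Hence for every $x$ there is some $k\in\{0,\dots,M-1\}$ with $\mathfrak{c}(x_{k+1})\geq 2$, and at that $k$ your own estimate gives $F_A(\sigma^k x)\ge\lambda_A^{-1}\min_l u_l$. Since $F_A\ge 0$, it follows that $\sum_{k=0}^{M-1}F_A\circ\sigma^k\ge\lambda_A^{-1}\min_l u_l$ everywhere on $\Omega_A$. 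Integrating against the $\sigma$-invariant measure $\tau$ then gives $\tau(F_A)\geq (M\lambda_A)^{-1}\min_l u_l$, uniformly in $\tau$. The paper reaches the uniform bound differently: it combines the limit just proved with the linear-in-$|\nu|$ lower bound on the eigenvalues from \cite[Proposition 4.6]{GGM25}.
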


\begin{proof}
Note that $F_A$ is locally constant, hence continuous. From the ergodic theorem we then get that for $\tau$-almost every $x\in \Omega_{A}$, $$\lim_{n\to \infty}\frac{1}{n}\sum_{k=0}^{n-1}F_A(\sigma^k(x))=\tau(F_A).$$ In particular, this holds for $\tau$-almost every $x\in C(\beta)$ for which, if $n>|\beta|$, then 
\begin{align*}
\sum_{k=0}^{|\pi_n(x)|-|\beta|-1} u_{\pi_n(x)_{|\beta|+k}}(1-P_{\pi_n(x)_{|\beta|+k},\pi_n(x)_{|\beta|+k+1}})&= \sum_{k=|\beta|-1}^{n-2} u_{x_{k+1}}(1-P_{x_{k+1},x_{k+2}})\\
&= \sum_{k=|\beta|-1}^{n-2} F_A(\sigma^k(x)).
\end{align*}
Since $\lambda_{A}u_{\nu_{|\nu|}}$ is uniformly bounded for $\nu\in \beta V_{A}$, $F_A$ is bounded, and $\beta$ is fixed, we have $$\lim_{n\to \infty} \frac{\lambda_\beta^{A}(\pi_n(x))}{n}=\lambda_{A}\tau(F_A).$$ 

Further, it is not difficult to see that $\tau(F_A)\neq 0$ using the primitivity of the matrix $A$. However, the fact that $\tau(F_A)\geq c_0$ for some constant $c_0>0$ independent of $\tau$ is derived from \cite[Proposition 4.6]{GGM25}.

Finally, due to the $\lambda_A$-conformality of $\mu_c$ (see \eqref{eq:conformal}) for all $i\in \{1,\ldots, N\}$ we have $$u_i\leq \lambda_A^{-1}.$$ The proof is complete by setting $c_1:=\max \{1-P_{i,j}: A_{i,j}=1\}$. 
\end{proof}

For understanding the potential $F_A$ and the asymptotic behaviour of the eigenvalues recall the distributions $u,p$ and the stochastic matrix $P$ from Subsection \ref{topmark}. Then, consider the product space $V_{A}^1\times V_{A}^1$ with stochastic matrix $$(P\otimes P)_{(i,i'),(j,j')}:=P_{i,j}P_{i',j'},$$ as well as the distribution $(p\otimes u)_{(i,i')}:=p_i u_{i'}$. This is stationary for $P\otimes P$ if and only if $u$ is stationary for $P$. In turn, this is equivalent to the in-degree (sum of columns in $A$) of every vertex to be $\lambda_{A}$. In general, if the columns of $A$ sum to the same constant $c$, then $A^T1=c1$ and from Perron--Frobenius theorem we have $c=\lambda_{A}.$ In that case we can normalise so that $v=1$, and hence $p=u$. Nevertheless, we have exponential convergence $uP^n\to p$, and similarly $$(p\otimes u) (P\otimes P)^n\to p\otimes p.$$ Let $(X_n)_{n\geq 0}$ be an one-step Markov chain on $V_{A}^1\times V_{A}^1$ with $\text{Law}(X_n)=p\otimes uP^n.$ Then, considering the diagonal $\mathcal{D}\subset V_{A}^1\times V_{A}^1$ we are interested in the following quantity 
$$s_{A}(n):=\Prob(X_{n}\in \mathcal{D}, X_{n+1}\not\in \mathcal{D}).$$ 
This is independent of $n\geq 0$ if $u$ is stationary, and in fact it easily follows that in general
$$s_{A}(n)=\sum_{i=1}^{N} \left[\sum_{m=1}^{N}(1-P_{i,m})P_{i,m} \right] p_i (uP^n)_i.$$ 
The quantity $\sum_{m=1}^{N}(1-P_{i,m})P_{i,m}$ is the so-called Gini impurity at the vertex $i$.

\begin{prop}\label{prop:sA}
It holds that $\mu(F_A)=s_A(0)$, where $\mu$ is the Parry measure \eqref{eq:Parrymeas}.
\end{prop}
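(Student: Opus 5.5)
Both sides are finite sums over pairs of admissible letters, so the plan is to compute each one explicitly and compare.

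First, $\mu(F_A)$. The potential $F_A(x)=u_{x_1}(1-P_{x_1,x_2})$ depends only on $x_1x_2$, so it is constant on length-two cylinders. Using \eqref{eq:Parrymeas}, $\mu(C(ij))=p_iP_{i,j}$, and therefore
$$\mu(F_A)=\sum_{i,j} p_i P_{i,j}\, u_i(1-P_{i,j}).$$
Here the sum may be taken over all pairs $(i,j)$, since $P_{i,j}=0$ whenever $A_{i,j}=0$.

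Next, $s_A(0)$. Setting $n=0$ in the formula displayed before the proposition, and using $(uP^0)_i=u_i$, gives
$$s_A(0)=\sum_{i=1}^N\Big[\sum_{m=1}^N(1-P_{i,m})P_{i,m}\Big]p_iu_i.$$
After renaming $m$ to $j$, this double sum is term-by-term the same as the expression for $\mu(F_A)$, which proves the claim.

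The only point needing care is that the displayed general formula for $s_A(n)$ is correct, since the proposition rests on it. I would check it directly from the definition of the chain. We have $X_0=(i,i')$ with law $p_iu_{i'}$. The event $X_0\in\mathcal D$ forces $i=i'$, with probability $p_iu_i$. Given $X_0=(i,i)$, the two coordinates then move independently via $P$. The probability that they stay on the diagonal is $\sum_m P_{i,m}^2$, so the probability that they leave is $1-\sum_mP_{i,m}^2=\sum_m P_{i,m}(1-P_{i,m})$, using the row-stochasticity $\sum_mP_{i,m}=1$. Multiplying and summing over $i$ recovers $s_A(0)$ as stated, and the identification with $\mu(F_A)$ follows.
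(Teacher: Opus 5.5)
Your proposal is correct and follows essentially the same route as the paper: integrate $F_A$, which is constant on two-letter cylinders, against $\mu(C(ij))=p_iP_{i,j}$, and match the result term by term with the $n=0$ case of the formula for $s_A(n)$. Your verification of that formula from the transition matrix $P\otimes P$ and the row-stochasticity of $P$ supplies a step the paper only asserts.
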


\begin{proof}
We have that
\begin{align*}
\int_{\Omega_{A}}F_A(z)\d \mu(z)&= \sum_{i,m=1}^{N}A_{i,m}\int_{C(im)}F_A(z) \d \mu(z)\\
&=\sum_{i,m=1}^{N}A_{i,m}u_i(1-P_{i,m})\mu (C(im))\\
&=\sum_{i=1}^{N} \left[\sum_{m=1}^{N}(1-P_{i,m})P_{i,m} \right] p_i u_i\\
&=s_{A}(0).
\end{align*}
\end{proof}

\begin{prop}\label{prop:sA2}
The quantity $\tau(F_A)$ is independent of $\tau\in \mathcal{M}_{\sigma,e}$ if and only if there is a uniform constant $c>0$ such that, for every $n\in \mathbb N$ and $\gamma\in V_A^n$ with $A_{\gamma_n,\gamma_1}=1$, it holds 
\begin{equation}\label{eq:sA2_0}
\frac{1}{n}\sum_{i=1}^{n} u_{\gamma_i} =c.
\end{equation}
Moreover, \eqref{eq:sA2_0} can happen only for $c=\sum_{i=1}^{N} p_i u_i.$ Further, $F_A$ is constant if and only if all row-sums of $A$ are equal (the graph is out-regular).
\end{prop}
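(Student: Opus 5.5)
The plan rests on one algebraic identity. Since $u_iP_{i,j}=A_{i,j}u_j\lambda_A^{-1}$ and $A_{x_1,x_2}=1$ for $x\in\Omega_A$, we have
$$F_A(x)=u_{x_1}-\lambda_A^{-1}u_{x_2}=g(x)-\lambda_A^{-1}g(\sigma(x)),\qquad g(x):=u_{x_1}.$$
For any $\tau\in\mathcal{M}_\sigma$, $\sigma$-invariance gives $\tau(g\circ\sigma)=\tau(g)$. Hence $\tau(F_A)=(1-\lambda_A^{-1})\tau(g)$, and $\tau(F_A)$ is independent of $\tau$ if and only if $\tau(g)$ is.

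\emph{The equivalence with \eqref{eq:sA2_0}.} For $\gamma\in V_A^n$ with $A_{\gamma_n,\gamma_1}=1$, the periodic point $\gamma\gamma\gamma\cdots$ lies in $\Omega_A$. Its orbit measure $\tau_\gamma=\frac1n\sum_{k=0}^{n-1}\delta_{\sigma^k(\gamma^\infty)}$ is ergodic and satisfies $\tau_\gamma(g)=\frac1n\sum_i u_{\gamma_i}$.
\begin{itemize}
\item[($\Rightarrow$)] If $\tau(F_A)$ is independent of $\tau\in\mathcal{M}_{\sigma,e}$, then applying this to the measures $\tau_\gamma$ gives \eqref{eq:sA2_0}.
\item[($\Leftarrow$)] Assume \eqref{eq:sA2_0}. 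I will use the standard fact that for a primitive (mixing) subshift of finite type the periodic orbit measures are weak$^*$ dense in $\mathcal{M}_\sigma$; this follows from specification, or from closing up generic orbit segments. Since $g$ is continuous, $\tau(g)=c$ for every $\tau\in\mathcal{M}_\sigma$, and in particular for every ergodic one.
\end{itemize}
Evaluating at the Parry measure, which is ergodic, identifies the constant: $c=\mu(g)=\sum_i\mu(C(i))u_i=\sum_i p_iu_i$.

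\emph{Constancy of $F_A$.} If all row sums equal $d$, then $A\mathbf 1=d\mathbf 1$. By Perron--Frobenius, $\lambda_A=d$ and $u=N^{-1}\mathbf 1$, so $F_A\equiv N^{-1}(1-\lambda_A^{-1})$. Conversely, suppose $F_A\equiv k$. Then for every admissible pair $ij$ we have $u_j=f(u_i)$, where $f(t)=\lambda_A(t-k)$. Fix a vertex $i$. By primitivity there are arbitrarily long admissible walks $i=x_0,x_1,\dots,x_n$, and along them $u_{x_n}=f^n(u_i)$. With $t^*=\lambda_Ak/(\lambda_A-1)$ the fixed point of $f$, this reads $u_{x_n}-t^*=\lambda_A^n(u_i-t^*)$. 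The left side is bounded, so $u_i=t^*$ for every $i$. Thus $u$ is constant, equal to $N^{-1}$, and $Au=\lambda_Au$ forces every row sum of $A$ to equal $\lambda_A$.

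\emph{Main obstacle.} The only non-elementary input is the weak$^*$ density of periodic orbit measures in $\mathcal{M}_\sigma$, used in ($\Leftarrow$). I would cite it from standard thermodynamic formalism references rather than reprove it. A shortcut would be to use only density in the ergodic measures together with the ergodic decomposition, but density itself is still required.
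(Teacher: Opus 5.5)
Your proof is correct, and it takes a genuinely different and more elementary route than the paper.

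\textbf{The paper's route.} The paper never isolates the identity $F_A=g-\lambda_A^{-1}g\circ\sigma$.
\begin{itemize}
\item For ($\Rightarrow$), it uses specification to upgrade ``constant on ergodic measures'' to ``constant on all invariant measures''. It then applies Liv{\v{s}}ic's theorem to get $F_A=c'+h\circ\sigma-h$ with an unknown continuous $h$. Iterating $u_{x_1}=\lambda_A^{-1}u_{x_2}+c'+h(\sigma(x))-h(x)$ gives $u_{x_1}=c+H(x)$, where $H$ has vanishing periodic sums.
\item For ($\Leftarrow$), it applies Liv{\v{s}}ic again, this time to $F_A-c(1-\lambda_A^{-1})$.
\item The value of $c$ comes from $\mu(H)=0$.
\item The out-regular case comes from the same iteration with $h=0$. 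This is essentially your affine recursion, read forwards rather than backwards.
\end{itemize}

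\textbf{What your identity changes.} It exhibits the cohomology $F_A\sim(1-\lambda_A^{-1})g$ explicitly, with transfer function $\lambda_A^{-1}g$.
\begin{itemize}
\item ($\Rightarrow$) reduces to testing on periodic orbit measures, with no density or Liv{\v{s}}ic input at all.
\item ($\Leftarrow$) needs only density of periodic orbit measures, and you can avoid even that. Take a Birkhoff-generic $x$ and close up $x_1\cdots x_n$ with a connecting word of fixed length $M-1$, where $A^M>0$. Then \eqref{eq:sA2_0} forces $\frac{1}{n}\sum_{i\le n}u_{x_i}=c+O(1/n)$.
\end{itemize}

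\textbf{Comparison.} The Liv{\v{s}}ic route is the general tool for potentials without an explicit transfer function, but here it is more machinery than needed. Your identity also gives more. From $\tau(F_A)=(1-\lambda_A^{-1})\sum_i\tau(C(i))u_i$ you get at once:
\begin{itemize}
\item the $\tau$-independent lower bound $\tau(F_A)\ge(1-\lambda_A^{-1})\min_iu_i$ needed in Theorem \ref{lem:eigen_asym}, which the paper instead imports from earlier work;
\item the closed form $\mu(F_A)=(1-\lambda_A^{-1})\sum_ip_iu_i$ for Proposition \ref{prop:sA}.
\end{itemize}
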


\begin{proof}
Assume that $\tau(F_A)$ is independent of $\tau\in \mathcal{M}_{\sigma,e}$ and set $c':=\tau(F_A)$. Then, clearly 
\begin{equation}\label{eq:sA2_1}
\tau(c')=\tau(F_A).
\end{equation}
Since $(\Omega_{A},\sigma)$ is mixing, it has specification. Therefore, $\mathcal{M}_{\sigma,e}$ is weak$^*$ dense in $\mathcal{M}_{\sigma}$, meaning that \eqref{eq:sA2_1} holds for every $\tau\in \mathcal{M}_{\sigma}$. The Liv{\v{s}}ic theorem \cite[Theorem 1.1]{Sar}  then implies that $F_A$ is cohomologous to $c'$, i.e. there is $h\in C(\Omega_{A})$ so that $$F_A=c'+h\circ \sigma - h.$$ In our case, for every $x\in \Omega_{A},$ we obtain $u_{x_1}=\lambda_{A}^{-1}u_{x_2}+c'+h(\sigma(x)) - h(x),$ and iteration gives 
\begin{equation}\label{eq:sA2_2}
u_{x_1}=c+H(x),
\end{equation} 
where $c:=c'\sum_{n\geq 0} \lambda_{A}^{-n}=c'\lambda_{A}(\lambda_{A}-1)^{-1},\, H(x):=\sum_{n\geq 0} \left(h(\sigma^{n+1}(x)) - h(\sigma^n(x))\right)\lambda_{A}^{-n}.$ It is easy to see that for $x\in \Omega_{A}$ with period $n\in \mathbb N$ it holds $$\sum_{k=0}^{n-1}H(\sigma^k(x))=0.$$ As a result, one has $$\sum_{k=0}^{n-1}u_{x_{k+1}}=nc.$$ 

For the other direction assume that \eqref{eq:sA2_0} holds and define $G:=F_A-c(\lambda_{A}-1)\lambda_{A}^{-1}$. Then, for every $x\in \Omega_{A}$ with period $n\in \mathbb N$ we have $$\sum_{k=0}^{n-1}G(\sigma^{k}(x))=0.$$ From the Liv{\v{s}}ic theorem we then obtain that $G$ is a coboundary. Hence, $F_A$ is cohomologous to a constant and thus $\tau(F_A)$ is independent of $\tau\in \mathcal{M}_{\sigma,e}$. 

Now whenever \eqref{eq:sA2_0} holds, from \eqref{eq:sA2_2} we have that $H$ is a coboundary, and since $\mu$ is $\sigma$-invariant, we obtain that $c=\mu(g)$ where $g\in C(\Omega_{A})$ is given by $g(x):=u_{x_1}.$ Clearly, it holds $\mu(g)=\sum_{i=1}^{N}p_iu_i.$

Finally, assuming $F_A$ is constant, from \eqref{eq:sA2_2} we get that $u$ is constant, meaning the row-sums of $A$ are equal. Conversely, if the row-sums are equal, the vector $(1,\ldots,1)$ is a $\lambda_{A}$-eigenvector and up to a scalar is equal to $u$. Hence, $u$ is constant and thus $F_A$ is constant.
\end{proof}

At this point we can consider the following normalisation of the Hamiltonian $\widetilde{D}$, see Remark \ref{rem:normalise_Ham}. The terminology is justified from the results in Subsection \ref{sec:sec5}, particularly Remark \ref{rem:synch}. 

\begin{definition}\label{defn:crit_time_Ham}
The quantity $\t_c\in (0,\infty)$ defined as 
$$\t_c:=\lambda_{A}^{-1}\sup\limits_{\tau\in \mathcal{M}_{\sigma,e}}\frac{\ent_{\tau}}{\tau(F_A)}$$ 
will be called the \textit{critical time} of the normalised Hamiltonian 
$$D:=-\Delta + (2P_A-1)M_L,$$ 
where the multiplication operator $M_L$ is associated with $L:G_A\to (0,\infty)$ which is defined for $\gamma\in I_A$ as 
$$L|_{G_{\gamma}}=|\gamma|_c:=\frac{\log \lambda_A}{\t_c}(|r(\gamma)|+|s(\gamma)|).$$
\end{definition}

Note that due to Theorem \ref{lem:eigen_asym}, we always have that $\t_c>\log \lambda_A$.

\begin{remark}\label{rem:crit}
Assume the row-sums of $A$ are equal to $d\geq 2$. Then, from Proposition \ref{prop:sA2}, the fact that $\lambda_A=d$, and the Variational Principle for $(\Omega_A,\sigma)$ (see \cite[Section 4.2]{Sar}), it is clear that $\t_c=N(d-1)^{-1}\log d$ and therefore
$$L|_{G_{\gamma}}=|\gamma|_c=\frac{d-1}{N}(|r(\gamma)|+|s(\gamma)|).$$ 
\end{remark}

\section{Finer structure in the heat kernel}

In this section we begin our study of the heat kernel $\mathrm{e}^{-t|D|}$. First, in Subsection \ref{subsec:eigen} we find closed formulas for the eigenfunctions of $D$, and in Subsection \ref{subsec:kernel} we put that together into an explicit description of the heat kernel for out-regular graphs.

\subsection{A basis of eigenfunctions}
\label{subsec:eigen}
Let $\beta\in V_A\setminus \{\o\}$. Turning now our attention to eigenfunctions, note that $0\in  \mathrm{Spec}(\Delta_{C(\beta)})$ has multiplicity $1$ with normalised eigenfunction 
$$h_\beta:=\mu_c(C(\beta))^{-1/2}\chi_{C(\beta)}.$$ Denote by $\overline{\beta V_{A}}$ the set of $\nu\in \beta V_{A}$ with $\mathfrak{c}(\nu_{|\nu|})\geq 2$. Following \cite{GGM25}, each $\lambda_\beta^{A}(\nu)$-eigenspace (where $\nu\in \overline{\beta V_{A}}$) has an abstract orthonormal basis $(h_{\nu ,j})_{j\in J_\beta(\nu)}$ for a certain finite index set $J_\beta(\nu)$. Here we present an explicit formula for that basis. We choose the index set for $\nu\in \overline{\beta V_{A}}$ as
$$J_\beta(\nu):=\{1,\ldots, \mathfrak{c}(\nu_{|\nu|})-1\}.$$
The orthonormal basis for $(\C h_\beta)^\perp\subseteq L^2(C(\beta),\mu_c)$ of eigenfunctions $(h_{\nu,j})_{\nu\in \overline{\beta V_{A}}, j\in J_\beta(\nu)}$ can be constructed following dyadic techniques analogous in spirit to \cite{KLPW}.

\begin{prop}\label{prop:eigenfunctions}
Let $\nu\in \overline{\beta V_{A}}$ and choose an ordering $\iota_\nu:\{1,\ldots, \mathfrak{c}({\nu_{|\nu|}})\} \to \mathfrak{C}(\nu_{|\nu|}).$ Also, for every $1\leq m\leq \mathfrak{c}({\nu_{|\nu|}})$ consider the cumulative distribution $$q_m:=\sum_{k=1}^{m} P_{\nu_{|\nu|},\iota_{\nu}(k)}.$$ Then, for every $j\in J_\beta(\nu)$ define $a^{(\nu,j)}\in \mathbb R^{\mathfrak{c}(\nu_{|\nu|})}$ as 
$$a^{(\nu,j)}_k=
\begin{cases}
\left(\frac{P_{\nu_{|\nu|},\iota_{\nu}(j+1)}}{\mu_c(C(\nu))q_j q_{j+1}}\right)^{1/2}, & 1 \leq k\leq j\\
-\left(\frac{q_j}{\mu_c(C(\nu))q_{j+1}P_{\nu_{|\nu|},\iota_{\nu}(j+1)}}\right)^{1/2}, & k=j+1\\
0, & k\geq j+2
\end{cases},
$$
and the function $h_{\nu,j}:\Omega_{A}\to \mathbb R$ as
$$h_{\nu,j}:=\sum_{k=1}^{\mathfrak{c}(\nu_{|\nu|})}a^{(\nu,j)}_k \chi_{C(\nu\iota_\nu(k))}.$$ Then, we have that
\begin{enumerate}
\item $h_{\nu,j}$ is supported on $C(\nu)$ and is constant on its children; 
\item $\int_{\Omega_{A}}h_{\nu,j}\d \mu_c=0$;
\item $\langle h_{\nu,j},h_{\nu,j'}\rangle_{L^2} = \delta_{j,j'}$;
\item $\{ \mu_c(C(\nu))^{-1/2}\chi_{C(\nu)}\}\cup \{h_{\nu,j}: j\in J_{\beta}(\nu)\}$ is an orthonormal basis for the subspace of all functions on $C(\nu)$ that are constant on its children.
\end{enumerate}
\end{prop}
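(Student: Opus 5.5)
The whole statement reduces to linear algebra on the children of $C(\nu)$, so the plan is to compute the masses of the children cylinders and then run a Gram--Schmidt / Helmert-type check. For the masses: since $\mu_c(C(\alpha))=u_{\alpha_n}\lambda_A^{-(n-1)}$, for a child $\nu i$ with $A_{\nu_{|\nu|},i}=1$ we get
$$\mu_c(C(\nu i))=\frac{u_i}{\lambda_A^{|\nu|}}=\mu_c(C(\nu))\frac{u_i}{\lambda_A u_{\nu_{|\nu|}}}=\mu_c(C(\nu))P_{\nu_{|\nu|},i}.$$
Writing $w_k:=\mu_c(C(\nu\iota_\nu(k)))=\mu_c(C(\nu))P_{\nu_{|\nu|},\iota_\nu(k)}$, the weights $w_k$ sum to $\mu_c(C(\nu))$, because the children partition $C(\nu)$ and $q_{\mathfrak{c}(\nu_{|\nu|})}=1$ ($P$ is stochastic). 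Hence the space of functions on $C(\nu)$ that are constant on children is isometric to $\mathbb{R}^{\mathfrak{c}}$ with the weighted inner product $\langle a,b\rangle=\sum_k a_kb_kw_k$. Writing $p_k:=P_{\nu_{|\nu|},\iota_\nu(k)}$ and $M:=\mu_c(C(\nu))$, this becomes $\langle a,b\rangle=M\sum_k a_kb_kp_k$.

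Item (1) is immediate from the definition, since the children cylinders lie in $C(\nu)$. For item (2), $\int h_{\nu,j}\,\d\mu_c=M\sum_k a_k^{(\nu,j)}p_k$. Up to the common factor $(M q_{j+1})^{-1/2}$, the first $j$ terms contribute $q_j(p_{j+1}/q_j)^{1/2}=(p_{j+1}q_j)^{1/2}$, and the $(j+1)$-th term contributes $-p_{j+1}(q_j/p_{j+1})^{1/2}=-(q_jp_{j+1})^{1/2}$. These cancel.

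For item (3), first compute the norm: $M\big[q_j\cdot\frac{p_{j+1}}{Mq_jq_{j+1}}+p_{j+1}\cdot\frac{q_j}{Mq_{j+1}p_{j+1}}\big]=\frac{p_{j+1}+q_j}{q_{j+1}}=1$. For orthogonality with $j<j'$, note that $a^{(\nu,j')}$ is constant, equal to some $c$, on the indices $k\le j'$, and this range contains the whole support $\{1,\dots,j+1\}$ of $a^{(\nu,j)}$. So $\langle h_{\nu,j},h_{\nu,j'}\rangle=c\,M\sum_k a^{(\nu,j)}_kp_k=0$ by the computation in (2).

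For item (4), $\mu_c(C(\nu))^{-1/2}\chi_{C(\nu)}$ is a unit vector (its norm squared is $M^{-1}\cdot M$), and it is orthogonal to every $h_{\nu,j}$ by (2). This gives $1+(\mathfrak{c}-1)=\mathfrak{c}$ orthonormal vectors in a $\mathfrak{c}$-dimensional space, hence a basis. No step is a genuine obstacle; the only care needed is the identification $w_k=Mp_k$ via conformality and the normalisation of $u$, which is where the transition probabilities enter.
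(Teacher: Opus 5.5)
Your proposal is correct and follows essentially the same route as the paper. You reduce to the weighted inner product $M\sum_k a_kb_kp_k$ on $\mathbb{R}^{\mathfrak{c}(\nu_{|\nu|})}$, check zero mean and unit norm, and obtain orthogonality from the constancy of $a^{(\nu,j')}$ on the support $\{1,\dots,j+1\}$ of $a^{(\nu,j)}$. The only differences are cosmetic: the paper derives the coefficients by solving the mean-zero and normalisation constraints rather than verifying them. It also leaves implicit the identity $\mu_c(C(\nu i))=\mu_c(C(\nu))P_{\nu_{|\nu|},i}$ and the dimension count for (4), both of which you spell out.
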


\begin{proof}
We will build an orthonormal basis for the subspace of all functions on $C(\nu)$ that are constant on its children. Any such function $f$ is associated with a vector $b\in \mathbb C^{\mathfrak{c}(\nu_{|\nu|})}$ so that $$f=\sum_{k=1}^{\mathfrak{c}(\nu_{|\nu|})} b_k \chi_{C(\nu\iota_\nu(k))}.$$ Here, given $j\in J_{\beta}(\nu)$, we shall focus on the vector $$a^{(\nu,j)}:=(\underbrace{\eta_{j},\ldots ,\eta_{j}}_{j-\text{times}},\theta_{j}, 0,\ldots, 0)\in \mathbb R^{\mathfrak{c}(\nu_{|\nu|})},$$ searching for suitable $\eta_{j},\theta_{j}\in \mathbb R$. Also, for brevity set $r_k:=P_{\nu_{|\nu|},\iota_{\nu}(k)}$, for $1\leq k\leq \mathfrak{c}(\nu_{|\nu|}),$ and $m_{\nu}:=\mu_c(C(\nu)).$ 

Asking for the function associated with $a^{(\nu,j)}$ to have zero mean gives
\begin{equation}\label{eq:Wav1}
0=\sum_{k=1}^{\mathfrak{c}(\nu_{|\nu|})}a^{(\nu,j)}_k r_k
=\sum_{k=1}^{j}a^{(\nu,j)}_k r_k + a^{(\nu,j)}_{j+1}r_{j+1}
=\eta_j q_j+\theta_j r_{j+1},
\end{equation}
hence 
\begin{equation}\label{eq:Wav2}
\theta_j=-\frac{\eta_j q_j}{r_{j+1}}.
\end{equation}
Further, asking for that function to have $L^2$-norm equal to $1$ gives
\begin{align*}
1&=m_{\nu}\sum_{k=1}^{\mathfrak{c}(\nu_{|\nu|})}(a^{(\nu,j)}_k)^2 r_k\\
&=m_{\nu}\left( \sum_{k=1}^{j}(a^{(\nu,j)}_k)^2 r_k + (a^{(\nu,j)}_{j+1})^2r_{j+1}\right)\\
&= m_{\nu}\left(\eta_j^2 q_j + \theta_j^2 r_{j+1}\right)\\
&=m_{\nu}(\eta_j^2 q_j + \eta_j^2q_j^2 r_{j+1}^{-1})\\
&=m_{\nu}\eta_j^2q_j(r_{j+1}+q_j)r_{j+1}^{-1}\\
&=m_{\nu}\eta_j^2q_jq_{j+1}r_{j+1}^{-1}.
\end{align*}
Therefore, 
\begin{equation}\label{eq:Wav3}
\eta_j= \left(\frac{r_{j+1}}{m_{\nu}q_jq_{j+1}}\right)^{1/2}.
\end{equation}
Substituting \eqref{eq:Wav3} to \eqref{eq:Wav2} gives $$\theta_j=-\left(\frac{q_j}{m_{\nu}q_{j+1}r_{j+1}}\right)^{1/2}.$$ Define $h_{\nu,j}$ as the function associated with $a^{(\nu,j)}.$

Regarding orthogonality, let $j, j' \in J_{\beta}(\nu)$ and assume $j + 1 \leq j'$. Then, from \eqref{eq:Wav1} we obtain
\begin{align*}
\langle h_{\nu,j}, h_{\nu,j'}\rangle_{L^2}&=m_{\nu}\sum_{k=1}^{\mathfrak{c}(\nu_{|\nu|})} a^{(\nu,j)}_k a^{(\nu,j')}_k r_k \\
&=m_{\nu}\sum_{k=1}^{j+1} a^{(\nu,j)}_k a^{(\nu,j')}_k r_k\\
&=m_{\nu}\eta_{j'}\sum_{k=1}^{j+1} a^{(\nu,j)}_k r_k\\
&=m_{\nu}\eta_{j'}\sum_{k=1}^{\mathfrak{c}(\nu_{|\nu|})} a^{(\nu,j)}_k r_k\\&=0.
\end{align*}
This completes the proof as part (4) is trivial.
\end{proof}

\subsection{Heat kernels for out-regular graphs}
\label{subsec:kernel}

We can now write an explicit formula for the heat kernel of the log-Laplacian when the underlying graph is out-regular. The proof follows that of \cite[Proposition 4.8]{GGM25} for full shifts, but now uses the explicit form of the eigenfunctions in \ref{prop:eigenfunctions}. Also, recall the critical time $\t_c>0$ from Definition \ref{defn:crit_time_Ham}.

\begin{thm}
\label{thm:Heat_kernel_formula}
Assume that all row-sums of $A$ are equal to $d\geq 2.$ Then, for every $\gamma \in I_A$ and $t>0$, the heat operator $\mathrm{e}^{-t\Delta_{\gamma}}$ admits a kernel $k_{\gamma,t}\in L^1(G_{\gamma}\times G_{\gamma},\mu_{\gamma}\times \mu_{\gamma})$. Specifically, setting $c=(d-1)N^{-1}$, for $g_1\neq g_2 \in G_{\gamma}$ it is given by 
$$k_{\gamma,t}(g_1,g_2):=h_\gamma(t)+H_\gamma(t)\mathrm{d}_\gamma(g_1,g_2)^{-\df+\delta t},$$
where $\delta=c\log_{\lambda}(\mathrm{e}),\,\, \df=\log_{\lambda} d,$ and
\begin{align*}
h_\gamma(t)&:=d^{|s(\gamma)|-1}N\left(1-\frac{(d-1)\mathrm{e}^{-dN^{-1}t}}{d\mathrm{e}^{-ct}-1}\right)\\
H_\gamma(t)&:=\frac{N-N\mathrm{e}^{-ct}}{d\mathrm{e}^{-ct}-1}\mathrm{e}^{-t(dN^{-1}-c|s(\gamma)|)}.
\end{align*}
Further, $k_{\gamma,t}(g,g)<\infty$ if and only if $t>\t_c=\df\delta^{-1}$, and for $t>\t_c$ we have that $k_{\gamma,t}$ is continuous. Finally, 
\begin{equation}
\label{etdeltaon}
\mathrm{e}^{-t\Delta_\gamma}=d^{-|s(\gamma)|+1}N^{-1}h_\gamma(t)P_{\Delta_\gamma}+H_\gamma(t)R_{\gamma,\delta t},
\end{equation}
where $P_{\Delta_\gamma}$ denotes the projection onto the kernel of $\Delta_\gamma$ and $R_{\gamma,\delta t}$ is the Riesz potential acting on $L^2(G_{\gamma},\mu_{\gamma})$ by $$R_{\gamma,\delta t}f(g_1)=\int_{G_{\gamma}}\frac{f(g_2)}{d_{\gamma}(g_1,g_2)^{\df-\delta t}} \d \mu_{\gamma}(g_2).$$
\end{thm}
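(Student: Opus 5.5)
The plan is to compute $k_{\gamma,t}$ directly from the spectral decomposition
\[
k_{\gamma,t}(g_1,g_2)=h_\beta(x)h_\beta(y)+\sum_{\nu\in \beta V_A}\mathrm{e}^{-t\lambda^A_\beta(\nu)}\sum_{j\in J_\beta(\nu)} h_{\nu,j}(x)h_{\nu,j}(y).
\]
Here $\beta=s(\gamma)$, $x=s_\gamma(g_1)$, $y=s_\gamma(g_2)$, and we use $\Delta_\gamma=\Delta_{C(\beta)}$ transported by the isometry $s_\gamma$. In the out-regular case $u_i=N^{-1}$ for all $i$, $\lambda_A=d$, and $P_{ij}=A_{ij}/d$. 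Hence \eqref{eq:eigenvalues} gives $\lambda^A_\beta(\nu)=dN^{-1}+c(|\nu|-|\beta|)$ with $c=(d-1)N^{-1}$. The eigenvalues therefore depend only on the length of $\nu$, which is what makes a closed formula possible. Also $\mu_c(C(\nu))=N^{-1}d^{-|\nu|+1}$ and $\mathfrak{c}(i)=d$ for every $i$.

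The key step is to evaluate the inner sum over $j$ using Proposition \ref{prop:eigenfunctions}. By part (4), the functions $\mu_c(C(\nu))^{-1/2}\chi_{C(\nu)}$ and $h_{\nu,j}$ form an orthonormal basis of the functions on $C(\nu)$ constant on children. The orthogonal projection onto that space has kernel $\sum_{\nu' \text{ child}}\mu_c(C(\nu'))^{-1}\chi_{C(\nu')}(x)\chi_{C(\nu')}(y)$. Therefore
\[
\sum_j h_{\nu,j}(x)h_{\nu,j}(y)=\mu_c(C(\nu))^{-1}\bigl(d\,\chi_{\{x,y\text{ share a child of }\nu\}}-\chi_{C(\nu)}(x)\chi_{C(\nu)}(y)\bigr),
\]
using $\mu_c(C(\nu'))=\mu_c(C(\nu))/d$. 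This makes the explicit coefficients $a^{(\nu,j)}$ irrelevant. Now fix $x\neq y$ in $C(\beta)$ and let $m$ be the length of their longest common prefix, so that $d(x,y)=\lambda^{-m}$ and $m\ge|\beta|$. Only the prefixes $\nu=\pi_n(x)$ with $|\beta|\le n\le m$ contribute. Each contributes $Nd^{n-1}\mathrm{e}^{-t(dN^{-1}+c(n-|\beta|))}$ times $(d-1)$ when $n<m$, and times $-1$ when $n=m$.

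Next I will sum these geometric series. The terms with $n<m$ give $(d-1)Nd^{|\beta|-1}\mathrm{e}^{-tdN^{-1}}\frac{(d\mathrm{e}^{-ct})^{m-|\beta|}-1}{d\mathrm{e}^{-ct}-1}$. Adding the $n=m$ term and the constant $Nd^{|\beta|-1}$ from the kernel part $h_\beta(x)h_\beta(y)$, the $m$-independent part should reproduce $h_\gamma(t)$. The $m$-dependent part is proportional to $(d\mathrm{e}^{-ct})^{m}$, with coefficient collecting to $H_\gamma(t)$. Finally I rewrite $(d\mathrm{e}^{-ct})^m=d(x,y)^{-\log_\lambda d+ct\log_\lambda \mathrm{e}}=d(x,y)^{-\df+\delta t}$. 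This rewriting gives the stated formula and, read as an operator identity, \eqref{etdeltaon}. The step I expect to be the main obstacle is this bookkeeping of prefactors, that is, matching $h_\gamma$ and $H_\gamma$ exactly including the $\mathrm{e}^{tc|s(\gamma)|}$ factor. I will check it by hand against the case $m=|\beta|$.

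The remaining claims I plan to handle as follows.
\begin{itemize}
\item \emph{Integrability.} For $t>0$, $L^1$-integrability follows from Ahlfors $\df$-regularity: $\int d(x,y)^{-\df+\delta t}\,\d\mu_c(y)\lesssim\sum_m \lambda^{-m\delta t}<\infty$.
\item \emph{Diagonal.} On the diagonal the series is $\sum_n (d\mathrm{e}^{-ct})^n$, which is finite if and only if $d\mathrm{e}^{-ct}<1$, i.e. $t>\log d/c=\df\delta^{-1}$. By Remark \ref{rem:crit} this equals $\t_c$.
\item \emph{Continuity.} For $t>\t_c$ the exponent $-\df+\delta t$ is positive, so $d^{-\df+\delta t}$ is continuous and vanishes on the diagonal. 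The closed formula then matches the diagonal value, and $k_{\gamma,t}$ is continuous.
\end{itemize}
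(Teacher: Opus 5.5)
Your proposal is correct and follows the paper's proof. As there, you expand the kernel in the eigenbasis, observe that for $x\neq y$ only the prefixes $\pi_n(x)$ with $|\beta|\le n\le m$ contribute, and sum the resulting geometric series in $d\mathrm{e}^{-ct}$; the prefactors you anticipate do come out to exactly $h_\gamma(t)$ and $H_\gamma(t)$. The one variation is how you evaluate $\sum_j h_{\nu,j}(x)h_{\nu,j}(y)$: you write it basis-independently as a difference of projection kernels, using only part (4) of Proposition \ref{prop:eigenfunctions}. The paper instead reads off the same values $(d-1)Nd^{|\nu|-1}$ and $-Nd^{|w|-1}$ from the explicit coefficients $a^{(\nu,j)}$, so your route shows those explicit formulas are not actually needed here.
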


\begin{proof}
Let $\gamma \in I_A,\,\, t>0$ and observe that we can equivalently work on $C(\beta)$ for $\beta=s(\gamma)$. Denote the associated kernel by $k_{\beta,t}$, which is written in terms of the orthonormal basis $(h_{\beta},h_{\nu,j})_{\nu \in \overline{\beta V_A}, j\in J_{\beta}(\nu)}$ of $L^2(C(\beta),\mu_c)$ of Proposition \ref{prop:eigenfunctions}. Further, since the row-sums of $A$ are equal to $d$, the eigenvalue associated to each such $h_{\nu,j}$ is $\lambda_{\beta}^A(\nu)= dN^{-1}+c(|\nu|-|\beta|).$ Also, recall that the basis consists of real-valued functions.

Let $x\neq y \in C(\beta)$ and $k:=-\log_{\lambda} d(x,y)\geq |\beta|$. Also, let $w\in \overline{\beta V_A}$ ($=\beta V_A$ as the graph is out-regular) such that $|w|=k,\,\,x=wx',\,\,y=wy'$ and $x_1'\neq y_1'$. Also, by $\nu < w$ we will mean $\nu=w_1\ldots w_l$, for some $l<k$. Then, 
\footnotesize
\begin{align*}
k_{\beta,t}(x,y)&=h_{\beta}(x)h_{\beta}(y)+\sum_{\nu \in \beta V_A} \mathrm{e}^{-\lambda_{\beta}^A(\nu)t}\sum_{j\in J_{\beta}(\nu)} h_{\nu,j}(x)h_{\nu,j}(y)\\ 
&=d^{|\beta|-1}N+\sum_{\nu \in \beta V_A, \nu<w} \mathrm{e}^{-\lambda_{\beta}^A(\nu)t}\sum_{j\in J_{\beta}(\nu)} h_{\nu,j}(x)h_{\nu,j}(y)+\mathrm{e}^{-\lambda_{\beta}^A(w)t}\sum_{j\in J_{\beta}(w)} h_{w,j}(x)h_{w,j}(y)\\
&=d^{|\beta|-1}N+\sum_{\nu \in \beta V_A, \nu<w} \mathrm{e}^{-\lambda_{\beta}^A(\nu)t}\sum_{j\in J_{\beta}(\nu)}\left(a_{\iota^{-1}_{\nu}(w_{|\nu|+1})}^{(\nu,j)}\right)^2   +\mathrm{e}^{-\lambda_{\beta}^A(w)t}\sum_{j\in J_{\beta}(w)} a_{\iota^{-1}_w(x_1')}^{(w,j)}a_{\iota^{-1}_w(y_1')}^{(w,j)}\\
&=d^{|\beta|-1}N+\sum_{\nu \in \beta V_A, \nu<w} \mathrm{e}^{-\lambda_{\beta}^A(\nu)t}d^{|\nu|-1}(d-1)N-\mathrm{e}^{-\lambda_{\beta}^A(w)t}d^{|w|-1}N.
\end{align*}
\normalsize
After basic computations, we obtain the desired formula for $k_{\beta,t}$, and the remaining statements follow immediately. 
\end{proof}

Similarly with \cite{GGM25}, we obtain the following.

\begin{prop}
\label{heat_kernel_formula_groupoid}
Assume that all row-sums of $A$ are equal to $d\geq 2.$ Then, for $t>\t_c$, the heat operator $\mathrm{e}^{-t(\Delta+M_{L})}$ admits a kernel 
$$K_t\in L^1(G_A\times G_A,\mu_{G_A}\times \mu_{G_A})\cap C(G_A\times G_A).$$ 
Specifically, for $g_1\neq g_2\in G_A$, it is given by 
$$K_t(g_1,g_2)=\begin{cases}\mathrm{e}^{-t|\gamma|_c}k_{\gamma,t}(g_1,g_2), & \text{if  }\; g_1,g_2\in G_{\gamma}\\
0, & \text{otherwise}\end{cases}.$$ 
\end{prop}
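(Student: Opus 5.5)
Since $\Delta=\bigoplus_{\gamma}\Delta_\gamma$ and $M_L$ acts on each summand $L^2(G_\gamma,\mu_\gamma)$ as the scalar $|\gamma|_c$ (Definition \ref{defn:crit_time_Ham}), the operators $\Delta$ and $M_L$ commute and are simultaneously block-diagonal. Hence
$$\mathrm{e}^{-t(\Delta+M_L)}=\bigoplus_{\gamma\in I_A}\mathrm{e}^{-t|\gamma|_c}\,\mathrm{e}^{-t\Delta_\gamma}.$$
By Theorem \ref{thm:Heat_kernel_formula}, each block $\mathrm{e}^{-t\Delta_\gamma}$ has kernel $k_{\gamma,t}$ on $G_\gamma\times G_\gamma$. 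So we define $K_t$ to be $\mathrm{e}^{-t|\gamma|_c}k_{\gamma,t}$ on each $G_\gamma\times G_\gamma$ and zero off the union of these blocks. The off-diagonal zero is forced because the blocks are mutually orthogonal and the $G_\gamma$ are clopen and disjoint. That $K_t$ represents the operator then holds formally on $\LC$, and follows in general by density once $K_t$ is shown to be integrable.

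The substantive step is $K_t\in L^1(G_A\times G_A)$, which amounts to the bound
$$\sum_\gamma \mathrm{e}^{-t|\gamma|_c}\|k_{\gamma,t}\|_{L^1(\mu_\gamma\times\mu_\gamma)}<\infty.$$
I would estimate $\|k_{\gamma,t}\|_{L^1}$ from the explicit formula. The constant term contributes $|h_\gamma(t)|\mu_\gamma(G_\gamma)^2$. Since $\mu_c(C(\beta))\asymp d^{-|\beta|}$ and $h_\gamma\asymp d^{|s(\gamma)|}$, this term is $O(d^{-|s(\gamma)|})$. The Riesz term is handled by Ahlfors regularity \eqref{eq:Ahlfors_reg}: for $t>\t_c$ the exponent $-\df+\delta t$ is positive, so $d_\gamma^{-\df+\delta t}\le \mathrm{diam}^{\,\cdot}\le 1$. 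Its $L^1$ norm is therefore at most $|H_\gamma(t)|\mu_\gamma(G_\gamma)^2$, and $|H_\gamma(t)|\lesssim \mathrm{e}^{tc|s(\gamma)|}$. With $\lambda_A=d$, Remark \ref{rem:crit} gives $|\gamma|_c=c(|r(\gamma)|+|s(\gamma)|)$, so the factor $\mathrm{e}^{tc|s(\gamma)|}$ in $H_\gamma$ is cancelled by $\mathrm{e}^{-tc|s(\gamma)|}$ from $\mathrm{e}^{-t|\gamma|_c}$. What remains to bound is a count:
$$\sum_\gamma \mathrm{e}^{-tc|r(\gamma)|}\,d^{-2|s(\gamma)|}\,(1+\mathrm{e}^{tc|s(\gamma)|}\cdots).$$
Here $\#\{\gamma: |r(\gamma)|=m,\ |s(\gamma)|=n\}\lesssim d^{m+n}$. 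Because of the cancellation, the sum is then dominated by
$$\sum_{m,n} d^{m}\mathrm{e}^{-tcm}\,d^{n}d^{-2n}\cdot(\text{bounded}),$$
which converges precisely when $\mathrm{e}^{tc}>d$, i.e.\ $t>\log d/c=\t_c$ by Remark \ref{rem:crit}. This computation is where the threshold $\t_c$ appears, and it is the step I expect to be the main obstacle: the exponents must be tracked carefully to check that no leftover factor $\mathrm{e}^{tc|s(\gamma)|}$ survives, and the sign and pole behaviour of $h_\gamma,H_\gamma$ must be controlled near $d\mathrm{e}^{-ct}=1$. Both should be fine for $t>\t_c$, where $d\mathrm{e}^{-ct}<1$ and all the constants are finite.

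Continuity of $K_t$ follows blockwise from the continuity of $k_{\gamma,t}$ for $t>\t_c$ given in Theorem \ref{thm:Heat_kernel_formula}, together with the fact that the $G_\gamma\times G_{\gamma'}$ form a clopen partition of $G_A\times G_A$. The diagonal values $k_{\gamma,t}(g,g)$ are finite exactly for $t>\t_c$, which is why the statement restricts to that range.
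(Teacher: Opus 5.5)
Your proposal is correct and is essentially the argument the paper has in mind; the paper itself only says the result follows as in \cite{GGM25}. You split $\mathrm{e}^{-t(\Delta+M_L)}=\bigoplus_\gamma \mathrm{e}^{-t|\gamma|_c}\mathrm{e}^{-t\Delta_\gamma}$, take the blockwise kernels from Theorem \ref{thm:Heat_kernel_formula}, and sum $L^1$ norms; the cancellation of $\mathrm{e}^{tc|s(\gamma)|}$ against $\mathrm{e}^{-t|\gamma|_c}$ leaves the series $\sum_m (d\mathrm{e}^{-tc})^m$, which is finite exactly for $t>\t_c$.

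Two small fixes are needed. First, your intermediate summand should read $\mathrm{e}^{-tc|r(\gamma)|}\bigl(\mathrm{e}^{-tc|s(\gamma)|}d^{-|s(\gamma)|}+d^{-2|s(\gamma)|}\bigr)$, up to $\gamma$-independent constants. Second, the integral representation needs no density argument, and $L^1$-integrability of the kernel alone would not give $L^2$-boundedness anyway; it holds directly on each block, since $k_{\gamma,t}$ is bounded on the finite measure space $G_\gamma$.
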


\section{Heat trace asymptotics}\label{sec:sec5}

In this and the coming sections we focus on classical spectral geometry questions for the spectral triple $(\LC,L^2(G_A),D)$. We start by computing the heat traces $\Tr(\mathrm{e}^{-t\Delta_{C(\beta)}})$ of cylinder sets and then move on to the groupoid case $\Tr(\mathrm{e}^{-t|D|})$.

\subsection{Heat trace asymptotics on cylinder sets}

We start by computing heat traces on the cylinder sets, which will later be put together to results on $G_A$ via the bisections. We first deform the matrix $A$ via the potential $F_A$, to its deterministic part, namely the $N\times N$-matrix $D_A$ given by 
$$D_{A;i,j}:=
\begin{cases}
1, & P_{i,j}=1\\
0, & P_{i,j}< 1
\end{cases}.$$
A first but important observation is the following.

\begin{lemma}\label{lem:deterministic}
The matrix $D_A$ is nilpotent.
\end{lemma}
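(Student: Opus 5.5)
The plan is to read $D_A$ as the adjacency matrix of a directed graph and show that this graph has no directed cycles.

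\textbf{Step 1: which edges survive in $D_A$.} Since $P$ is stochastic with $P_{i,j}=A_{i,j}u_j/(\lambda_A u_i)$ and $u$ has strictly positive entries, the condition $P_{i,j}=1$ forces $P_{i,m}=0$ for all $m\neq j$. Hence $A_{i,m}=0$ for $m\neq j$, so $\mathfrak{c}(i)=1$ and $\mathfrak{C}(i)=\{j\}$. Conversely, if $\mathfrak{c}(i)=1$ with $\mathfrak{C}(i)=\{j\}$, then $P_{i,j}=1$. So $D_A$ is the adjacency matrix of the subgraph $\mathcal{G}_D$ of the graph of $A$ consisting of the vertices of out-degree one together with their unique outgoing edge.

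\textbf{Step 2: acyclic implies nilpotent.} Because $D_A$ has nonnegative entries, $(D_A^n)_{i,j}$ counts the directed paths of length $n$ from $i$ to $j$ in $\mathcal{G}_D$. A path of length $N$ visits $N+1$ vertices, so by pigeonhole it repeats a vertex and therefore contains a directed cycle. It therefore suffices to show that $\mathcal{G}_D$ has no directed cycle; then $D_A^N=0$.

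\textbf{Step 3: a cycle contradicts primitivity.} Suppose there is a cycle $i_1\to i_2\to\cdots\to i_k\to i_1$ in $\mathcal{G}_D$, and let $S=\{i_1,\ldots,i_k\}$. By Step 1, every vertex of $S$ has the next cycle vertex as its only successor in the graph of $A$. Hence $S$ is forward-invariant: every admissible word starting at $i_1$ stays in $S$. Primitivity gives some $n$ with $A^n>0$, so every vertex is reachable from $i_1$, and therefore $S=\{1,\ldots,N\}$. It follows that every vertex has out-degree one and $A$ is a permutation matrix. But every power of a permutation matrix is again a permutation matrix, so $A^n>0$ is impossible for $N\geq 2$. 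For $N=1$ we would have $A=(1)$ and $\lambda_A=1$, which contradicts $\lambda_A>1$. Alternatively, one can argue that if all row sums equal $1$ then $\lambda_A=1$.

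The only point needing care is the identification in Step 1. It uses the positivity of $u$, which holds by Perron--Frobenius because $A$ is primitive. Everything else is elementary.
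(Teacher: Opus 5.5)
Your proof is correct, but it is organized differently from the paper's. The paper argues in one line. Since $A$ is primitive, $A^M>0$ for some $M$. A deterministic path $i_0\to i_1\to\cdots\to i_M$ (i.e.\ $P_{i_k,i_{k+1}}=1$ for all $k$) would make $i_0i_1\cdots i_M$ the only admissible word of length $M+1$ starting at $i_0$, so row $i_0$ of $A^M$ would have a single nonzero entry. Hence no deterministic path has length $M$, and $D_A^M=0$.

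You instead first identify the deterministic edges with the unique outgoing edges of out-degree-one vertices. The paper leaves this step implicit, and it is where positivity of $u$ is used. You then exclude deterministic cycles by a closed-class argument and finish by pigeonhole.

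The paper's route is shorter, but it uses aperiodicity through $A^M>0$. Yours uses only irreducibility together with the fact that $A$ is not a permutation matrix (equivalently $\lambda_A>1$), so it proves the lemma for irreducible $A$ as well. You also make explicit the degenerate case $N=1$, $A=(1)$, where $D_A=(1)$ is not nilpotent. The paper's one-liner excludes this case only tacitly, through the standing assumption $\lambda_A>1$.
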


\begin{proof}
Since $A$ is primitive there is some $M\in \mathbb N$ such that $A^M$ has only positive entries. Therefore, we cannot have deterministic paths longer than $M$. 
\end{proof}

Now, observe that the potential $F_A$ is constant on cylinder sets of length two, hence we can view it as an $N\times N$-matrix $$F_{A;i,j}:=
\begin{cases}
F_A(C(ij)), & A_{i,j}=1\\
0, & A_{i,j}=0
\end{cases}.$$
Then, for $t\geq 0$ consider the $N\times N$-matrix $A(t)$ given by
\begin{equation}\label{eq:A(t)}
A(t)_{i,j}:= A_{i,j}\mathrm{e}^{-t\lambda_{A}F_{A;i,j}}.
\end{equation}
It is clear that each $A(t)$ is primitive since $A$ is primitive. Therefore, there is a right $r(A(t))$-eigenvector $u(t)$ and a left $r(A(t))$-eigenvector $v(t)$ of $A(t)$, normalised so that $\sum_{i=1}^{N} u(t)_i=1$ and $u(t)\cdot v(t)=1$.  Moreover, the following is true. 

\begin{lemma}
\label{lem:deform}
For $1\leq i,j\leq N$ it holds 
\begin{equation}\label{eq:deform1}
\lim_{t\to \infty} A(t)_{i,j}= D_{A;i,j}.
\end{equation}
Consequently, one has $\lim_{t\to \infty} r(A(t))=0.$
\end{lemma}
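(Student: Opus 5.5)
The plan is to check the limit entrywise by splitting into cases according to the transition probability $P_{i,j}$, and then pass to the spectral radius by continuity of eigenvalues.

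\emph{Entrywise limit.} Fix $1\leq i,j\leq N$.
\begin{itemize}
\item If $A_{i,j}=0$, then $A(t)_{i,j}=0$ for all $t$. Also $P_{i,j}=0$ by \eqref{transprob}, so $D_{A;i,j}=0$ and \eqref{eq:deform1} holds trivially.
\item If $A_{i,j}=1$, then $F_{A;i,j}=u_i(1-P_{i,j})$. Since $u_i>0$ by Perron--Frobenius, we have $F_{A;i,j}=0$ exactly when $P_{i,j}=1$, and $F_{A;i,j}>0$ exactly when $P_{i,j}<1$.
\item In the first subcase $A(t)_{i,j}=1=D_{A;i,j}$ for every $t$.
\item In the second subcase $\lambda_A F_{A;i,j}>0$, so $A(t)_{i,j}=\mathrm{e}^{-t\lambda_AF_{A;i,j}}\to 0=D_{A;i,j}$ as $t\to\infty$.
\end{itemize}
This proves \eqref{eq:deform1}, and in fact $A(t)\to D_A$ in any matrix norm, since there are only finitely many entries.

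\emph{Spectral radius.} The spectral radius is a continuous function on $M_N(\C)$: the eigenvalues depend continuously on the entries, as roots of the characteristic polynomial, whose coefficients are polynomial in the entries. Hence
$$r(A(t))\to r(D_A) \quad\text{as } t\to\infty.$$
By Lemma \ref{lem:deterministic}, $D_A$ is nilpotent, so all its eigenvalues vanish and $r(D_A)=0$. Therefore $\lim_{t\to\infty} r(A(t))=0$.

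\emph{Alternative route.} To avoid invoking continuity of roots, one can argue directly with a power of the matrix. Let $M$ be such that $D_A^M=0$. Then $A(t)^M\to D_A^M=0$ entrywise, and
$$r(A(t))^M = r(A(t)^M)\leq \|A(t)^M\|_\infty \to 0,$$
which gives the same conclusion.

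There is no real obstacle here. The only point needing care is the equivalence $F_{A;i,j}=0 \iff P_{i,j}=1$ for admissible pairs $(i,j)$, and this rests on the strict positivity of the Perron--Frobenius vector $u$.
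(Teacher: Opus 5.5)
Your proof is correct and takes essentially the same route as the paper. Both use the same case split on $A_{i,j}$ and $P_{i,j}$ for the entrywise limit, with $F_{A;i,j}>0$ exactly when $P_{i,j}<1$, and both then combine continuity of the spectral radius with the nilpotency of $D_A$ from Lemma \ref{lem:deterministic}. The paper leaves the continuity step implicit, whereas you state it explicitly, and your alternative bound $r(A(t))^M\leq \|A(t)^M\|_\infty\to 0$ is also valid.
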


\begin{proof}
We focus on \eqref{eq:deform1}, as the second limit follows from the first one and Lemma \ref{lem:deterministic}. If $A_{i,j}=0$ then \eqref{eq:deform1} trivially holds. Assume now that $A_{i,j}=1$. If $P_{i,j}=1$, then $F_{A;i,j}=0$ and $A(t)_{i,j}=1=D_{A;i,j}$. Now, if $P_{i,j}<1$ we have that $F_{A;i,j}$ is bounded below by some $c>0$ that is independent of $i,j$. The result follows.
\end{proof}

\begin{prop}\label{prop:Laplacian_est}
We have that $\Tr(\mathrm{e}^{-t \Delta_{C(\beta)}})<\infty$ if and only if $t>\t_c$, where $\t_c\in (0,\infty)$ is the critical time introduced in Definition \ref{defn:crit_time_Ham}, namely  
$$\t_c=\lambda_{A}^{-1}\sup\limits_{\tau\in \mathcal{M}_{\sigma,e}}\frac{\ent_{\tau}}{\tau(F_A)}.$$ 
In particular, $\t_c $ is the unique solution of $r(A(t))=1$. Moreover, $$\lim_{t\to \t_c^+}(t-\t_c)\Tr(\mathrm{e}^{-t\Delta_{C(\beta)}})=-\frac{u(\t_c)_{\beta_{|\beta|}}}{r(A(\t_c))'}\sum_{j=1}^{N}\mathrm{e}^{-\t_c\lambda_A u_{j}}(\mathfrak{c}(j)-1) v(\t_c)_j.$$
\end{prop}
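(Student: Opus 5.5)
The plan is to write the heat trace as a sum over words and recognise that sum as an entry of a power series in the matrix $A(t)$. By Proposition \ref{prop:eigenfunctions} and \eqref{eq:eigenvalues}, the nonzero spectrum of $\Delta_{C(\beta)}$ consists of the values $\lambda^A_\beta(\nu)$ for $\nu\in\beta V_A$, with multiplicity $\mathfrak{c}(\nu_{|\nu|})-1$. The kernel adds the single term $1$. Hence
$$\Tr(\mathrm{e}^{-t\Delta_{C(\beta)}})=1+\sum_{\nu\in\beta V_A}(\mathfrak{c}(\nu_{|\nu|})-1)\,\mathrm{e}^{-t\lambda_A u_{\nu_{|\nu|}}}\prod_{k=0}^{|\nu|-|\beta|-1}\mathrm{e}^{-t\lambda_A F_{A;\nu_{|\beta|+k},\nu_{|\beta|+k+1}}}.$$
The product over $k$ is exactly a product of entries $A(t)_{i,j}$ along the admissible path from $\beta_{|\beta|}$ to $\nu_{|\nu|}$. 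Summing over words of extension length $n$ therefore gives
$$\Tr(\mathrm{e}^{-t\Delta_{C(\beta)}})=1+\sum_{n\ge0}\bigl(A(t)^n w(t)\bigr)_{\beta_{|\beta|}},$$
where $w(t)_j:=(\mathfrak{c}(j)-1)\mathrm{e}^{-t\lambda_A u_j}$.

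\textbf{Finiteness.} The vector $w(t)$ is nonnegative and nonzero, since some vertex has $\mathfrak{c}(j)\ge2$ by primitivity. Because $A(t)$ is primitive, Perron--Frobenius gives $A(t)^n=r(A(t))^n u(t)v(t)^T+O(\rho^n)$ with $\rho<r(A(t))$, where $u(t)v(t)^T$ has strictly positive entries. So the series converges if and only if $r(A(t))<1$.

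\textbf{The threshold.} The entries of $A(t)$ strictly decrease in $t$ wherever $F_{A;i,j}>0$, and some such entries lie on every cycle by Lemma \ref{lem:deterministic}. Hence $t\mapsto r(A(t))$ is strictly decreasing and real-analytic. Its value at $t=0$ is $\lambda_A>1$, and it tends to $0$ by Lemma \ref{lem:deform}. So there is a unique $t^*$ with $r(A(t^*))=1$, and the trace is finite exactly for $t>t^*$.

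To identify $t^*$ with $\t_c$, I would use the variational principle for the locally constant potential $-t\lambda_A F_A$:
$$\log r(A(t))=P(-t\lambda_A F_A)=\sup_\tau\bigl(\ent_\tau-t\lambda_A\tau(F_A)\bigr).$$
By Theorem \ref{lem:eigen_asym} we have $\tau(F_A)\ge c_0>0$, so the pressure is $\le0$ if and only if $t\lambda_A\ge \ent_\tau/\tau(F_A)$ for all ergodic $\tau$. Restricting to ergodic measures is justified by the ergodic decomposition, since a ratio of affine functionals has its supremum on extreme points. Strict decrease of $r(A(t))$ then gives $t^*=\lambda_A^{-1}\sup_\tau \ent_\tau/\tau(F_A)=\t_c$. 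This identification is the step I expect to need the most care: one must check that the pressure equals the log of the spectral radius for this two-block potential (standard via the transfer matrix), and verify that the supremum is attained.

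\textbf{Residue.} For $t$ near $\t_c^+$, split $A(t)=r(t)\,u(t)v(t)^T+R(t)$, where $R(t)$ has spectral radius bounded uniformly below $1$. Then
$$\sum_n\bigl(A(t)^n w\bigr)_{\beta_{|\beta|}}=\frac{u(t)_{\beta_{|\beta|}}\,\bigl(v(t)\cdot w(t)\bigr)}{1-r(A(t))}+O(1).$$
Since $1-r(A(t))=-r(A(\t_c))'(t-\t_c)+O((t-\t_c)^2)$, with $r'<0$ by analytic perturbation theory of a simple eigenvalue, multiplying by $(t-\t_c)$ and letting $t\to\t_c^+$ gives the stated limit.
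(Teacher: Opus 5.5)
Your proposal is correct and follows essentially the same route as the paper. You expand the trace as $\sum_{n}(A(t)^n w(t))_{\beta_{|\beta|}}$, use the Perron--Frobenius asymptotics both for convergence and for the simple pole at $r(A(t))=1$, and identify the threshold through $\log r(A(t))=\pre(-t\lambda_AF_A)$ together with the variational principle.

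The one real difference is in the threshold step. You get strict monotonicity of $r(A(t))$ and $r(A(\t_c))'<0$ from the Perron--Frobenius comparison theorem and the first-variation formula $r(A(t))'=v(t)^{T}A'(t)u(t)<0$, and you locate the zero via the sublevel set $\{t:\pre(-t\lambda_AF_A)\le 0\}=[\t_c,\infty)$. The paper instead uses the derivative formula $\pre(\phi_t)'=-\lambda_A\tau_{\phi_t}(F_A)$ and the equilibrium state $\tau_{\phi_t}$. Your version is slightly more elementary, and as you set it up it does not need the supremum to be attained.
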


\begin{proof}
We aim to write $\Tr(\mathrm{e}^{-t\Delta_{C(\beta)}})$ in terms of $A(t)$, for $t\geq 0$. Also, let $H$ denote a bounded continuous function on $[0,\infty)$ that is real-analytic on $(0,\infty)$, which will be made clear from the context. Formally, we have 
\begin{align*}
\Tr(\mathrm{e}^{-t\Delta_{C(\beta)}})&=1+\sum_{\nu \in \overline{\beta V_{A}}} \mathrm{e}^{-t\lambda_{\beta}^{A}(\nu)}(\mathfrak{c}(\nu_{|\nu|})-1)\\
&=H(t)+\sum_{n\geq 0} \sum_{\nu \in \overline{\beta V_{A}^n}}\mathrm{e}^{-t\lambda_{\beta}^{A}(\nu)}(\mathfrak{c}(\nu_{|\nu|})-1)\\
&=H(t)+\sum_{n\geq 1} \sum_{\nu \in \overline{\beta V_{A}^n}}\mathrm{e}^{-t\lambda_{\beta}^{A}(\nu)}(\mathfrak{c}(\nu_{|\nu|})-1)\\
&=H(t)+\sum_{n\geq 1} \sum_{\nu \in \overline{\beta V_{A}^n}}\mathrm{e}^{-t\lambda_A u_{\nu_{|\nu|}}}(\mathfrak{c}(\nu_{|\nu|})-1)\prod_{k=0}^{n-1}\mathrm{e}^{-t\lambda_A F_{A;\nu_{|\beta|+k}, \nu_{|\beta|+k+1}}}\\
&=H(t)+\sum_{n\geq 1} \sum_{j:\mathfrak{c}(j)\geq 2}\mathrm{e}^{-t\lambda_A u_{j}}(\mathfrak{c}(j)-1)A(t)^n_{\beta_{|\beta|},j}\\
&=H(t)+\sum_{j:\mathfrak{c}(j)\geq 2}\mathrm{e}^{-t\lambda_A u_{j}}(\mathfrak{c}(j)-1) \left[\sum_{n\geq 1}A(t)^n_{\beta_{|\beta|},j}\right].
\end{align*}

Since $A(t)$ is primitive, the Perron--Frobenius theorem asserts that for $1\leq i,j\leq N$ it holds 
\begin{equation}\label{eq:deform2}
\Bigl \lvert  \frac{A(t)^n_{i,j}}{r(A(t))^n}-u(t)_iv(t)_j\Bigr \rvert = O\left(\left(\frac{|r_2(A(t))|}{r(A(t))}\right)^n\right),
\end{equation}
where $r_2(A(t))$ is the eigenvalue of $A(t)$ that has the largest modulus amongst all the other eigenvalues of $A(t)$ after $r(A(t))$. In particular, $|r_2(A(t))|<r(A(t)).$ This immediately means that $\Tr(\mathrm{e}^{-t \Delta_{C(\beta)}})<\infty$ if and only if $r(A(t))<1.$ 

We claim that there is some $\t_c'>0$ such that $r(A(t))<1$ if and only if $t>\t_c'$. We also aim to compute $\t_c'$ explicitly. To this end, for $t\geq 0$, let us denote for brevity the potential $-t\lambda_A F_A$ by $\phi_t$, which is a real-valued function on $\Omega_A$ and can also be viewed as an $N\times N$-matrix. Then, the topological pressure (see \cite{MU})
\begin{equation}\label{eq:deform3}
\pre(\phi_t)=\log r(A(t)).
\end{equation}
Indeed, consider the transfer operator $\mathcal{L}_{\phi_t}:C(\Omega_A,\mathbb R)\to C(\Sigma_A,\mathbb R)$ given by $$\mathcal{L}_{\phi_t}f(x)=\sum_{\sigma(y)= x}\mathrm{e}^{\phi_t(y)}f(y).$$ It is known that $\pre(\phi_t)=\log r(L_{\phi_t}).$ Moreover, for $n\in \mathbb N$ it holds
\begin{align*}
\mathcal{L}_{\phi_t}^n 1(x)&=\sum_{\sigma^n(y)=x}\mathrm{e}^{\sum_{k=0}^{n-1}\phi_t(\sigma^k(y))}\\
&=\sum_{j:jx_1\in V_A^{n+1}}\mathrm{e}^{\phi_t(j_1,j_2)}\ldots \mathrm{e}^{\phi_t(j_{n-1},j_n)}\mathrm{e}^{\phi_t(j_n,x_1)}\\
&=\sum_{j=1}^{N}A(t)^n(j,x_1).
\end{align*}
Since $C(\Omega_A,\mathbb R)$ is a lattice and $\mathcal{L}_{\phi_t}$ is positive we then get $$\|\mathcal{L}_{\phi_t}^n\|=\|\mathcal{L}_{\phi_t}^n1\|_{\infty}=\max\limits_{1\leq i\leq N}\sum_{j=1}^{N}A(t)^n(j,i)=\|A(t)^n\|_1.$$ Gelfand's formula then yields $r(\mathcal{L}_{\phi_t})=r(A(t))$, thus proving \eqref{eq:deform3}. Now the Variational Principle asserts that 
\begin{equation}\label{eq:deform4}
\pre(\phi_t)=\sup_{\tau\in \mathcal{M}_{\sigma}} \{\ent_{\tau}+\tau(\phi_t)\}.
\end{equation}
Here $\ent_{\tau}$ denotes the entropy of $\tau.$ Since the dynamics on $(\Omega_A,\sigma)$ are positively expansive, topologically mixing and $\phi_t$ is H{\"o}lder continuous, there is a unique $\tau_{\phi_t}\in \mathcal{M}_{\sigma,e}$ such that 
\begin{equation}\label{eq:deform5}
\pre(\phi_t)= \ent_{\tau_{\phi_t}}+\tau_{\phi_t}(\phi_t).
\end{equation}
Also, from \cite[Theorem 2.6.12]{MU} the function $[0,\infty)\to \mathbb R, \,\, t\mapsto \pre(\phi_t)$ is continuous and real-analytic on $(0,\infty)$. Further, from \cite[Proposition 2.6.13]{MU} we get that $$\pre(\phi_t)'=-\lambda_A\tau_{\phi_t}(F_A).$$ Then, Theorem \ref{lem:eigen_asym} implies $\pre(\phi_t)'<0$ for all $t>0$, hence $t\mapsto \pre(\phi_t)$ is strictly decreasing. Also, since $\pre(\phi_0)=\log \lambda_A >0$ and $\lim_{t\to \infty} \pre(\phi_t)=-\infty$ (see Lemma \ref{lem:deform} and \eqref{eq:deform3}), there is a unique $\t_c'>0$ such that $$\pre(\phi_{\t_c'})=0.$$ Then, $\pre(\phi_t)<0$ if and only if $t>\t_c'.$ From \eqref{eq:deform4} and \eqref{eq:deform5} we also get $$\t_c'=\t_c=\lambda_{A}^{-1}\sup\limits_{\tau\in \mathcal{M}_{\sigma,e}}\frac{\ent_{\tau}}{\tau(F_A)}.$$ 

We move on to show that  $\Tr(\mathrm{e}^{-t\Delta_{C(\beta)}})$ has a simple pole at $t=\t_c$. For $t\geq 0$, the functions $r(A(t)),u(t),v(t)$ are bounded real-analytic, and for $t>\t_c$ using \eqref{eq:deform2} we have 
\begin{align*}
\Tr(\mathrm{e}^{-t\Delta_{C(\beta)}})&=H(t)+\sum_{j:\mathfrak{c}(j)\geq 2}\mathrm{e}^{-t\lambda_A u_{j}}(\mathfrak{c}(j)-1) \left[\sum_{n\geq 1}A(t)^n_{\beta_{|\beta|},j}\right]\\
&= H(t)+\sum_{j:\mathfrak{c}(j)\geq 2}\mathrm{e}^{-t\lambda_A u_{j}}(\mathfrak{c}(j)-1) u(t)_{\beta_{|\beta|}}v(t)_j\left[\sum_{n\geq 0}r(A(t))^n \right]\\
&= H(t)+\sum_{j:\mathfrak{c}(j)\geq 2}\mathrm{e}^{-t\lambda_A u_{j}}(\mathfrak{c}(j)-1) u(t)_{\beta_{|\beta|}}v(t)_j \left[1-r(A(t))\right]^{-1}.
\end{align*}
Since $1-r(A(t))$ is analytic around $t=\t_c$, $1-r(A(\t_c))=0$ and $r(A(\t_c))'\neq 0$ as $r(A(\t_c))'=\pre(\phi_{\t_c})'<0$, we get that $$1-r(A(t))=-r(A(\t_c))'(t-\t_c)+O((t-\t_c)^2).$$ Consequently, $$\lim_{t\to \t_c^+}(t-\t_c)\Tr(\mathrm{e}^{-t\Delta_{C(\beta)}})=-\frac{1}{r(A(\t_c))'}\sum_{j:\mathfrak{c}(j)\geq 2}\mathrm{e}^{-\t_c\lambda_A u_{j}}(\mathfrak{c}(j)-1) u(\t_c)_{\beta_{|\beta|}}v(\t_c)_j.$$
\end{proof}

As a corollary to Proposition \ref{prop:Laplacian_est}, or rather its proof, we conclude the following from a short computation resulting in local heat trace asymptotics. First, recall that $A(t)_{i,j}= A_{i,j}\mathrm{e}^{-t\lambda_{A}F_{A;i,j}}$, which we consider as an entire $N\times N$-matrix valued function of $t\in \C$. Consider the set
\begin{equation}\label{eq:singularities}
\mathfrak{T}_A:=\{t\in \C: \; 1\in \mathrm{Spec}(A(t))\}
\end{equation}
Since $A(t)$ is entire and $\mathfrak{T}_A\neq \C$ by Lemma \ref{lem:deform}, we have that $(1-A(t))^{-1}$ is a meromorphic function of $t\in \C$, so $\mathfrak{T}_A\subseteq \C$ is a discrete subset of $\mathbb C$.

\begin{cor}
\label{cortoLaplacian_est}
Let $\beta \in V_A\setminus \{\o\}$ and $w\in \beta V_A$. Then, the function $t\mapsto  \Tr(\chi_{C(w)}\mathrm{e}^{-t\Delta_{C(\beta)}})$ defined for $t>\t_c$ extends to a meromorphic function in $t\in \C$, with poles of order at most $N$ and situated in the discrete set $\mathfrak{T}_A$. Moreover, as $t\to t_c^+$,
\begin{align*}
\Tr&(\chi_{C(w)}\mathrm{e}^{-t\Delta_{C(\beta)}})=\\
=&
\begin{cases}
\Tr(\mathrm{e}^{-t\Delta_{C(\beta)}}), &\mbox{if $C(w)= C(\beta)$},\\
\Tr(\mathrm{e}^{-t\Delta_{C(\beta)}})\frac{u(t)_{w_{|w|}}}{u(t)_{\beta_{|\beta|}}}\prod_{k=0}^{|w|-|\beta|-1}\mathrm{e}^{-t \lambda_A F_{A;w_{|\beta|+k},w_{|\beta|+k+1}}}+H_w(t) &\mbox{if $C(w)\subsetneq C(\beta)$},
\end{cases},
\end{align*}
where $H_w(t)$ is bounded continuous on $[0,\infty)$ and real-analytic on $(0,\infty)$.
\end{cor}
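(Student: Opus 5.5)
We expand $\Tr(\chi_{C(w)}\mathrm{e}^{-t\Delta_{C(\beta)}})$ in the orthonormal eigenbasis $(h_\beta,h_{\nu,j})$ of Proposition \ref{prop:eigenfunctions}:
\[
\Tr(\chi_{C(w)}\mathrm{e}^{-t\Delta_{C(\beta)}})=\langle h_\beta,\chi_{C(w)}h_\beta\rangle+\sum_{\nu\in\overline{\beta V_A}}\mathrm{e}^{-t\lambda^A_\beta(\nu)}\sum_{j\in J_\beta(\nu)}\langle h_{\nu,j},\chi_{C(w)}h_{\nu,j}\rangle .
\]
If $C(w)=C(\beta)$, then $\chi_{C(w)}$ acts as the identity on $L^2(C(\beta),\mu_c)$ and there is nothing to prove. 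For $C(w)\subsetneq C(\beta)$ we split the sum over $\nu$ into three parts.

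\emph{Terms with $\nu$ a proper prefix of $w$.} There are finitely many such $\nu$, so these terms together with the $h_\beta$-term contribute an entire function of $t$. This function is bounded on $[0,\infty)$ because every eigenvalue is positive.

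\emph{Terms with $\nu$ incomparable to $w$.} Here $\chi_{C(w)}h_{\nu,j}=0$, so these terms vanish.

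\emph{Terms with $\nu\in wV_A$.} Here $h_{\nu,j}$ is supported in $C(w)$, so $\langle h_{\nu,j},\chi_{C(w)}h_{\nu,j}\rangle=1$. This part therefore equals $\sum_{\nu\in \overline{wV_A}}\mathrm{e}^{-t\lambda^A_\beta(\nu)}(\mathfrak{c}(\nu_{|\nu|})-1)$.

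Next we factorise the eigenvalue formula \eqref{eq:eigenvalues} along $\nu=w\nu'$:
\[
\lambda^A_\beta(\nu)=\lambda^A_w(\nu)+\lambda_A\sum_{k=0}^{|w|-|\beta|-1}F_{A;w_{|\beta|+k},w_{|\beta|+k+1}} .
\]
The third part is thus $\prod_{k=0}^{|w|-|\beta|-1}\mathrm{e}^{-t\lambda_AF_{A;w_{|\beta|+k},w_{|\beta|+k+1}}}$ times $\sum_{\nu\in\overline{wV_A}}\mathrm{e}^{-t\lambda_w^A(\nu)}(\mathfrak{c}(\nu_{|\nu|})-1)$. The latter sum is, up to the bounded real-analytic correction $H(t)$, exactly the sum computed in the proof of Proposition \ref{prop:Laplacian_est} with $\beta$ replaced by $w$:
\[
\sum_{j:\mathfrak{c}(j)\geq2}\mathrm{e}^{-t\lambda_Au_j}(\mathfrak{c}(j)-1)\Bigl[\sum_{n\geq1}A(t)^n_{w_{|w|},j}\Bigr].
\]

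To get the meromorphic extension we rewrite the geometric series for $t>\t_c$ as $\sum_{n\geq1}A(t)^n=A(t)(1-A(t))^{-1}$; it converges for $t>\t_c$ because $r(A(t))<1$ there. Since $A(t)$ is entire in $t$, the entries of $(1-A(t))^{-1}=\mathrm{adj}(1-A(t))/\det(1-A(t))$ are meromorphic on $\C$. Their poles lie at zeros of $\det(1-A(t))$, which is exactly the set $\mathfrak{T}_A$. The remaining factors, including the finite correction $H(t)$, are entire. The main point to check is the pole-order bound $N$. I would bound the order of vanishing of $\det(1-A(t))$ at $t_0\in\mathfrak{T}_A$ by the algebraic multiplicity of $1$ as an eigenvalue of $A(t_0)$, which is at most $N$. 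Concretely, I would use the Laurent expansion of the resolvent $(z-A(t))^{-1}$: the nilpotent part at eigenvalue $1$ has index at most $N$, so the resolvent blows up like at most $|1-\mu(t)|^{-N}$, where $\mu(t)$ denotes the perturbed eigenvalue. If this argument fails, I would fall back on an adjugate argument, which gives a pole order at most the vanishing order of the determinant.

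For the asymptotics as $t\to\t_c^+$, apply \eqref{eq:deform2} to get
\[
\sum_{n\geq1}A(t)^n_{w_{|w|},j}=u(t)_{w_{|w|}}v(t)_j\,(1-r(A(t)))^{-1}+O(1).
\]
The same expansion with $\beta$ in place of $w$ holds for $\Tr(\mathrm{e}^{-t\Delta_{C(\beta)}})$. Taking the ratio, the $w$-sum equals $\frac{u(t)_{w_{|w|}}}{u(t)_{\beta_{|\beta|}}}\Tr(\mathrm{e}^{-t\Delta_{C(\beta)}})$ up to a bounded real-analytic error. All remaining bounded errors, including the finitely many prefix terms, are collected into $H_w(t)$, which gives the stated formula.
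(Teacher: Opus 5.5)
Your route is the paper's. The paper integrates $k_{\beta,t}(x,x)$ over $C(w)$, which amounts to your sum $\sum\langle h_{\nu,j},\chi_{C(w)}h_{\nu,j}\rangle$. It then puts the finitely many prefixes $\nu<w$ into $H_w$ and factors $\lambda^A_\beta(\nu)=\lambda^A_w(\nu)+\lambda_A\sum_{k}F_{A;w_{|\beta|+k},w_{|\beta|+k+1}}$ on $wV_A$. Finally it rewrites the rest via $[A(t)(1-A(t))^{-1}]_{w_{|w|},j}$ and compares with $\Tr(\mathrm{e}^{-t\Delta_{C(\beta)}})$ using \eqref{eq:deform2}. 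All of that is correct in your proposal.

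The gap is in the step you add beyond the paper, the justification of the pole order $N$. The order in $t$ to which $\det(1-A(t))$ vanishes at $t_0\in\mathfrak{T}_A$ is not bounded by the algebraic multiplicity of $1$ in $\mathrm{Spec}(A(t_0))$. For general entire families this fails already for $1\times 1$: with $A(t)=1-(t-t_0)^k$ the eigenvalue $1$ is simple, but $(1-A(t))^{-1}$ has a pole of order $k$. Your resolvent estimate has the same defect. A bound by $|1-\mu(t)|^{-N}$ becomes a bound by $|t-t_0|^{-N}$ only if every eigenvalue branch through $1$ satisfies $|1-\mu(t)|\gtrsim|t-t_0|$, i.e.\ crosses $1$ transversally. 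Moreover, at a multiple eigenvalue the branches are in general Puiseux series rather than analytic functions $\mu(t)$. Transversality does hold at $t_0=\t_c$, where $1=r(A(\t_c))$ is a simple eigenvalue with $r(A(\t_c))'\neq 0$, so that pole is simple. Nothing in your argument provides it at the other, complex, points of $\mathfrak{T}_A$. The adjugate fallback only bounds the pole order by the vanishing order of $\det(1-A(t))$, which is exactly what remains to be controlled.

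The paper gives no argument here either; it reads the bound off the expression $A(t)(1-A(t))^{-1}$. What you can prove cheaply is a finite bound uniform in $\beta$ and $w$. Since $P_{i,j}=A_{i,j}u_j/(\lambda_Au_i)$, one has $\lambda_AF_{A;i,j}=\lambda_Au_i-u_j$. Hence $A(t)$ is diagonally similar to $\mathrm{diag}(\mathrm{e}^{-t(\lambda_A-1)u_i})A$, and
\[
\det(1-A(t))=\sum_{S\subseteq\{1,\ldots,N\}}(-1)^{|S|}\det(A_{S,S})\,\mathrm{e}^{-t(\lambda_A-1)\sum_{i\in S}u_i}.
\]
This exponential polynomial is non-zero, since it does not vanish for real $t>\t_c$, and it has at most $2^N$ distinct exponents. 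By uniqueness for the associated constant-coefficient linear ODE, it vanishes to order at most $2^N-1$ at every point. Reaching $N$ needs an additional input, such as transversality of all eigenvalue branches at every point of $\mathfrak{T}_A$, which neither your proposal nor the paper supplies.
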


\begin{proof}
From the proof of Proposition \ref{prop:Laplacian_est}, we see that for an entire function $H$,
\begin{equation}\label{eq:heat_ord_poles}
\Tr(\mathrm{e}^{-t\Delta_{C(\beta)}})=H(t)+\sum_{j:\mathfrak{c}(j)\geq 2}\mathrm{e}^{-t\lambda_A u_{j}}(\mathfrak{c}(j)-1) \left[A(t)(1-A(t))^{-1}\right]_{\beta_{|\beta|},j}.
\end{equation}
Therefore,  $t\mapsto  \Tr(\mathrm{e}^{-t\Delta_{C(\beta)}})$ extends to a meromorphic function in $t\in \C$ whose poles are of order at most $N$ and situated in the discrete set $\mathfrak{T}_A$.

The only non-trivial case to consider is when $C(w)\subsetneq C(\beta)$. Also, without loss of generality we can assume that $w\in \overline{\beta V_A}$. As in Proposition \ref{prop:Laplacian_est} and Theorem \ref{thm:Heat_kernel_formula}, for $t>\t_c$ we compute that
\footnotesize
\begin{align*}
\Tr(\chi_{C(w)}\mathrm{e}^{-t\Delta_{C(\beta)}})
&=\int_{C(w)}k_{\beta,t}(x,x)\d \mu_c(x)\\
&=\int_{C(w)}h_{\beta}(x)^2 \d \mu_c(x)
+ \sum_{\nu\in \overline{\beta V_A}} \mathrm{e}^{-\lambda_{\beta}^A(\nu)t}
\sum_{j\in J_{\beta}(\nu)}
\int_{C(w)}h_{\nu,j}(x)^2\d \mu_c(x)\\
&=\mu_c(C(w))\mu_c(C(\beta))^{-1}
+\sum_{\nu < w}\mathrm{e}^{-\lambda_{\beta}^A(\nu)t}
\sum_{j\in J_{\beta}(\nu)}
\left(a^{(\nu,j)}_{\iota_{\nu}^{-1}(w_{|\nu|+1})}\right)^2
\mu_c(C_{w_1\ldots w_{|\nu|+1}})\\
&\qquad +\sum_{\nu \in \overline{w V_{A}}}
\mathrm{e}^{-t\lambda_{\beta}^{A}(\nu)}
(\mathfrak{c}(\nu_{|\nu|})-1)\\
&=H_w(t)+\sum_{n\geq 0} \sum_{\nu \in \overline{w V_{A}^n}}
\mathrm{e}^{-\lambda_{\beta}^{A}(\nu)t}
(\mathfrak{c}(\nu_{|\nu|})-1)\\
&=H_w(t)+\sum_{n\geq 1} \sum_{\nu \in \overline{w V_{A}^n}}
\mathrm{e}^{-\lambda_{\beta}^{A}(\nu)t}
(\mathfrak{c}(\nu_{|\nu|})-1)\\
&= H_w(t)+\left(\prod_{k=0}^{|w|-|\beta|-1}\mathrm{e}^{-t \lambda_A F_{A;w_{|\beta|+k},w_{|\beta|+k+1}}}\right)\sum_{n\geq 1}\sum_{\nu \in \overline{wV_A^n}}\mathrm{e}^{-\lambda_w^A(\nu)t}(\mathfrak{c}(\nu_{|\nu|})-1)\\
&=H_w(t)+\Tr(\mathrm{e}^{-t\Delta_{C(\beta)}})\frac{u(t)_{w_{|w|}}}{u(t)_{\beta_{|\beta|}}}\prod_{k=0}^{|w|-|\beta|-1}\mathrm{e}^{-t \lambda_A F_{A;w_{|\beta|+k},w_{|\beta|+k+1}}}.
\end{align*}
\normalsize
Similarly with \eqref{eq:heat_ord_poles}, for an entire function $H_w$ we can also write,
\footnotesize
\begin{align*}
\Tr(\chi_{C(w)}&\mathrm{e}^{-t\Delta_{C(\beta)}})=\\
=&H_w(t)+\sum_{j:\mathfrak{c}(j)\geq 2}\mathrm{e}^{-t\lambda_A u_{j}}(\mathfrak{c}(j)-1) \left[A(t)(1-A(t))^{-1}\right]_{w_{|w|},j}\prod_{k=0}^{|w|-|\beta|-1}\mathrm{e}^{-t \lambda_A F_{A;w_{|\beta|+k},w_{|\beta|+k+1}}},
\end{align*}
\normalsize
so we can conclude that $t\mapsto  \Tr(\chi_{C(w)}\mathrm{e}^{-t\Delta_{C(\beta)}})$ extends to a meromorphic function in $t\in \C$ whose poles are of order at most $N$ and situated in the discrete set $\mathfrak{T}_A$. The proof is now complete.
\end{proof}

As the potential $\phi_{\t_c}:=-\t_c\lambda_AF_A$ is constant on cylinder sets of length two, the associated equilibrium measure $\tau_{\phi_{\t_c}}$ is a Markov measure. Namely, it is given by the distribution $p(\t_c):=\left(p(\t_c)_j=u(\t_c)_jv(\t_c)_j\right)_{j=1}^{N}$ and the transition probabilities $$P(\t_c)_{i,j}:=A(\t_c)_{i,j}\frac{u(\t_c)_j}{u(\t_c)_i}.$$ Consequently, the residue at $t=\t_c$ is a weighted average of the function $\mathfrak{c}$. 

\begin{cor}
It holds that $$\lim_{t\to \t_c^+}(t-\t_c)\Tr(\mathrm{e}^{-t\Delta_{C(\beta)}})=-\frac{u(\t_c)_{\beta_{|\beta|}}}{r(A(\t_c))'}\mathbb E_{\tau_{\phi_{\t_c}}}\left((\mathfrak{c}-1) G_{\t_c}\right),$$ where the weight $G_{\t_c}:\{1,\ldots,N\}\to (0,\infty)$ is given by $G_{\t_c}(j):=\mathrm{e}^{-\t_c\lambda_Au_j}u(\t_c)_j^{-1}.$
\end{cor}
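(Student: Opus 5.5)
This is a rewriting of the residue formula from Proposition \ref{prop:Laplacian_est}:
$$\lim_{t\to \t_c^+}(t-\t_c)\Tr(\mathrm{e}^{-t\Delta_{C(\beta)}})=-\frac{u(\t_c)_{\beta_{|\beta|}}}{r(A(\t_c))'}\sum_{j=1}^{N}\mathrm{e}^{-\t_c\lambda_A u_{j}}(\mathfrak{c}(j)-1) v(\t_c)_j.$$
The factor in front already agrees with the claimed expression, so it suffices to identify the sum with $\mathbb E_{\tau_{\phi_{\t_c}}}\left((\mathfrak{c}-1)G_{\t_c}\right)$. Here $\mathfrak{c}-1$ and $G_{\t_c}$ are regarded as functions of the first letter $x_1$ on $\Omega_A$, and terms with $\mathfrak{c}(j)=1$ contribute zero.

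First we recall the structure of $\tau_{\phi_{\t_c}}$. Since $r(A(\t_c))=1$ by Proposition \ref{prop:Laplacian_est}, the matrix $P(\t_c)_{i,j}=A(\t_c)_{i,j}u(\t_c)_j/u(\t_c)_i$ is stochastic, because $A(\t_c)u(\t_c)=u(\t_c)$. Moreover, $p(\t_c)_j=u(\t_c)_jv(\t_c)_j$ is a probability vector by the normalisation $u(\t_c)\cdot v(\t_c)=1$, and it is stationary for $P(\t_c)$ because $v(\t_c)A(\t_c)=v(\t_c)$. This Markov measure is the equilibrium state of the locally constant potential $\phi_{\t_c}$. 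It is the Parry-type construction applied to the primitive matrix $A(\t_c)$, and it is invariant with the correct pressure, $\ent+\tau(\phi_{\t_c})=\log r(A(\t_c))=0$. By uniqueness of equilibrium states, as used in the proof of Proposition \ref{prop:Laplacian_est}, it equals $\tau_{\phi_{\t_c}}$. The one-dimensional marginal is therefore $\tau_{\phi_{\t_c}}(C(j))=p(\t_c)_j$.

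With the marginal in hand, the computation is direct:
$$\mathbb E_{\tau_{\phi_{\t_c}}}\left((\mathfrak{c}-1)G_{\t_c}\right)=\sum_{j=1}^N u(\t_c)_jv(\t_c)_j(\mathfrak{c}(j)-1)\mathrm{e}^{-\t_c\lambda_Au_j}u(\t_c)_j^{-1}=\sum_{j=1}^N \mathrm{e}^{-\t_c\lambda_Au_j}(\mathfrak{c}(j)-1)v(\t_c)_j.$$
This is exactly the sum in Proposition \ref{prop:Laplacian_est}. The division by $u(\t_c)_j$ is legitimate because $u(\t_c)$ is strictly positive by Perron--Frobenius applied to the primitive matrix $A(\t_c)$.

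The only step needing care is the identification of the marginal of $\tau_{\phi_{\t_c}}$ with $p(\t_c)$, that is, that the described Markov measure really is the equilibrium state. We handle it by the standard fact that, for a potential depending on two coordinates, the equilibrium state is the Parry measure of the weighted matrix. Alternatively, one verifies the pressure identity directly and invokes uniqueness.
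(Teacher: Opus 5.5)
Your proposal is correct and follows the paper's own argument. The paper identifies $\tau_{\phi_{\t_c}}$ as the Markov measure with marginal $p(\t_c)_j=u(\t_c)_jv(\t_c)_j$ and transition matrix $P(\t_c)$, and then rewrites the sum in Proposition~\ref{prop:Laplacian_est} as the stated expectation. Your checks of stochasticity, stationarity, the pressure identity $\ent+\tau(\phi_{\t_c})=0$ and uniqueness of the equilibrium state supply the identification step that the paper asserts without proof.
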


\begin{example}\label{ex:out-reg}
Assume $A$ has row-sums equal to some $d\geq 2$. Then, from Proposition \ref{prop:sA2} we see that $F_A$ is constant. In fact, viewing it as an $N\times N$-matrix we have 
$$F_{A;i,j}=
\begin{cases}
N^{-1}(1-d^{-1}), & A_{i,j}=1\\
0, & A_{i,j}=0
\end{cases}.$$
As a result, for $t\geq 0$ the matrix $A(t)$ has row-sums equal to $d\mathrm{e}^{-tN^{-1}(d-1)}$ and hence $$r(A(t))=d\mathrm{e}^{-tN^{-1}(d-1)}.$$ Also, the right $r(A(t))$-eigenvector $u(t)$ of $A(t)$ can be normalised so that $u(t)_j=N^{-1}$. This means $\sum_{j=1}^{N} v(t)_j=N$. We also have that $\t_c=N(d-1)^{-1}\log d$ (see Remark \ref{rem:crit}) and $$\lim_{t\to \t_c^+}(t-\t_c)\Tr(\mathrm{e}^{-t\Delta_{C(\beta)}})=Nd^{-\frac{d}{d-1}}.$$ Note that $\t_c$ is exactly the threshold $\df \delta^{-1}$ appearing in Theorem \ref{thm:Heat_kernel_formula}. 
\end{example}

\subsection{Heat trace asymptotics on the groupoid}
\label{heattracegroupdopoad}

We are now ready to compute the asymptotics of the heat traces of Hamiltonians on the whole groupoid $G_A$. For $i,j\in \{1,\ldots,N\}$ we let $V_A(i,j)$ denote the set of admissible words of arbitrary length which start with $i$ and end with $j$. Also, by $V_A^n(i,j)\subset V_A(i,j)$ denote the words of length $n\in \mathbb N$.

\begin{lemma}
\label{lem:Potential_est}
For $i,j\in \{1,\ldots,N\}$ and $t>0$ define 
\begin{equation}
\label{lknlanda}
F(t,i,j):=\sum_{\alpha\in V_A(i,j)} \mathrm{e}^{-t|\alpha|}.
\end{equation}
The series \eqref{lknlanda} converges if and only if $t>\log \lambda_A$ and 
$$\lim_{t\to \log \lambda_A^+}(t-\log \lambda_A)F(t,i,j)=\frac{u_i v_j}{\lambda_A}.$$
Moreover, $t\mapsto F(t,i,j)$ extends to a meromorphic function of $t\in \C$ with poles being of order at most $N$ situated in the set 
$$\mathrm{LSpec}(A):=\{t\in \C: \mathrm{e}^t\in \mathrm{Spec}(A)\}.$$
\end{lemma}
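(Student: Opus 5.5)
Our plan is to rewrite the series as a matrix power series in $A$, exactly as in the proof of Proposition \ref{prop:Laplacian_est}. A word $\alpha\in V_A^n(i,j)$ with $n\geq 1$ is an admissible path $i=\alpha_1\to\alpha_2\to\cdots\to\alpha_n=j$ with $n-1$ transitions. Hence $\#V_A^n(i,j)=(A^{n-1})_{i,j}$, and we obtain
\begin{equation*}
F(t,i,j)=\sum_{n\geq 1}\mathrm{e}^{-tn}(A^{n-1})_{i,j}=\mathrm{e}^{-t}\sum_{m\geq 0}\left(\mathrm{e}^{-t}A\right)^m_{i,j}.
\end{equation*}
Since all terms are non-negative for real $t$, convergence can be decided termwise.

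For the convergence criterion we use the Perron--Frobenius asymptotics for the primitive matrix $A$, as in \eqref{eq:deform2} with $t=0$:
\begin{equation*}
A^m_{i,j}=\lambda_A^m u_iv_j+O(|r_2(A)|^m), \qquad |r_2(A)|<\lambda_A,
\end{equation*}
with the normalisation $u\cdot v=1$. The leading term then gives
\begin{equation*}
F(t,i,j)=\mathrm{e}^{-t}u_iv_j\sum_{m\geq 0}(\lambda_A\mathrm{e}^{-t})^m+(\text{a series converging for } t>\log|r_2(A)|).
\end{equation*}
Because $u_iv_j>0$, this diverges exactly when $\lambda_A\mathrm{e}^{-t}\geq 1$. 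So the series converges if and only if $t>\log\lambda_A$.

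For the residue we sum the geometric series. For $t>\log\lambda_A$ the leading part equals
\begin{equation*}
\frac{\mathrm{e}^{-t}u_iv_j}{1-\lambda_A\mathrm{e}^{-t}},
\end{equation*}
and the remainder is analytic near $t=\log\lambda_A$ because $\log|r_2(A)|<\log\lambda_A$. Expanding $1-\lambda_A\mathrm{e}^{-t}=(t-\log\lambda_A)+O((t-\log\lambda_A)^2)$, we get
\begin{equation*}
(t-\log\lambda_A)F(t,i,j)\to \lambda_A^{-1}u_iv_j.
\end{equation*}
This is the claimed limit.

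For the meromorphic extension, note that for $\mathrm{Re}(t)>\log\lambda_A$ we have the closed form
\begin{equation*}
F(t,i,j)=\mathrm{e}^{-t}\left[(1-\mathrm{e}^{-t}A)^{-1}\right]_{i,j}=\left[(\mathrm{e}^{t}-A)^{-1}\right]_{i,j}.
\end{equation*}
By Cramer's rule this is a ratio of entire functions of $t$ with denominator $\det(\mathrm{e}^t-A)=\prod_k(\mathrm{e}^t-\mu_k)$, where the $\mu_k$ are the eigenvalues of $A$ counted with multiplicity. It therefore extends meromorphically to $\C$, with poles only where $\mathrm{e}^t\in\mathrm{Spec}(A)$, which is the set $\mathrm{LSpec}(A)$.

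The main point needing care is the bound on the pole order. At a zero $t_0$ of $\det(\mathrm{e}^t-A)$, the map $t\mapsto \mathrm{e}^t$ is locally biholomorphic. Each factor $\mathrm{e}^t-\mu_k$ with $\mu_k=\mathrm{e}^{t_0}$ therefore vanishes to order exactly $1$ at $t_0$. The order of vanishing of the determinant is the algebraic multiplicity of $\mathrm{e}^{t_0}$, which is at most $N$. Since the adjugate matrix is entire, the pole order is at most $N$. A sharper bound, the size of the largest Jordan block, would follow from the resolvent's Jordan decomposition, but it is not needed.
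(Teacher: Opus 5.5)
Your proof is correct and follows essentially the same route as the paper. You rewrite $F(t,i,j)$ as the Neumann series $\mathrm{e}^{-t}[(1-\mathrm{e}^{-t}A)^{-1}]_{i,j}$, use the Perron--Frobenius asymptotics $A^m_{i,j}\approx \lambda_A^m u_iv_j$ to isolate the simple pole at $t=\log\lambda_A$ with residue $u_iv_j/\lambda_A$, and read off the meromorphic continuation from the resolvent. Your Cramer's-rule argument for the order-$N$ bound (each zero of $\det(\mathrm{e}^t-A)$ has order equal to the algebraic multiplicity of $\mathrm{e}^{t}$, since $t\mapsto \mathrm{e}^t$ is locally biholomorphic) makes explicit what the paper asserts without detail.
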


\begin{proof}
We compute that
\begin{align*}
F(t,i,j)&= \sum_{n=1}^{\infty}\mathrm{e}^{-tn}\left(\sum_{\alpha\in V_A^n(i,j)}1 \right)= \delta_{i,j} \mathrm{e}^{-t}+\sum_{n=2}^{\infty} \mathrm{e}^{-tn} A_{i,j}^{n-1}=\\
=&\mathrm{e}^{-t}(\delta_{i,j}-1+\sum_{n=0}^{\infty} \mathrm{e}^{-tn} A_{i,j}^{n})=\mathrm{e}^{-t}([(1-\mathrm{e}^{-t}A)^{-1}]_{i,j}).
\end{align*}
We see that $F(t,i,j)$ extends to a meromorphic function of $t\in \C$ with poles being of order at most $N$ situated in $\mathrm{LSpec}(A)$.

Again, consider a function $H(t)$ analytic for $t>\log \lambda_A -\varepsilon,$ where $\varepsilon>0$ is small enough. Using the asymptotics \eqref{eq:deform2} of the Perron--Frobenius theorem we get that

\begin{align*}
F(t,i,j)&=  \delta_{i,j} \mathrm{e}^{-t}+\sum_{n=2}^{\infty} \mathrm{e}^{-tn} A_{i,j}^{n-1}\\
&=\delta_{i,j}\mathrm{e}^{-t}+\sum_{n=2}^{\infty}\mathrm{e}^{-tn}\lambda_A^{n-1}\left(\lambda_A^{-n+1}A_{i,j}^{n-1}-u_iv_j+u_iv_j\right)\\
&= H(t)+\sum_{n=2}^{\infty}(\mathrm{e}^{-t+\log \lambda_A})^n \lambda_A^{-1}u_iv_j\\
&=H(t)+\lambda_A^{-1}u_iv_j\mathrm{e}^{-2(t-\log \lambda_A)}(1-\mathrm{e}^{-t+\log \lambda_A})^{-1}.
\end{align*}
The result follows. 
\end{proof}

\begin{remark}\label{rem:synch}
At this point it becomes clear why we had to normalise $\widetilde{D}$ from Subsection \ref{subsec:ham} and obtain $D$ from Definition \ref{defn:crit_time_Ham}. Following Proposition \ref{prop:Laplacian_est}, the singularity introduced by $\Delta$ happens at $t=\t_c$, whereas the one from $M_{\widetilde{L}}$ occurs at $t=\log \lambda_A$ as we see from Lemma \ref{lem:Potential_est}. However, $\t_c >\log \lambda_A$ and therefore it seems appropriate to synchronise the singularities by rescaling $\widetilde{L}$. In other words, by considering the heat trace $\Tr(\mathrm{e}^{-t|\widetilde{D}|})$ for $t>\t_c$ would miss completely the dynamical information encoded in $\widetilde{L}.$
\end{remark}
 Recall that $$|\cdot|_c:=\frac{\log \lambda_A}{\t_c} |\cdot|,\qquad L:=\frac{\log \lambda_A}{\t_c}\widetilde{L},\qquad D:=-\Delta + (2P_A-1)M_{L}.$$ 

For $j\in \{1,\ldots,N\}$, denote $\lim_{t\to \t_c^+}(t-\t_c)\Tr(\mathrm{e}^{-t\Delta_{C(j)}})$ by $\Res\limits_{t=\t_c}\Tr(\mathrm{e}^{-t\Delta_{C(j)}})$, which is computed explicitly in Proposition \ref{prop:Laplacian_est}. Also, let 
$$V(j):=\frac{{\t_c^2}}{\lambda_A^3\log^2\lambda_A}\sum_{i=1}^{N}\sum_{k\neq i}A_{i,j}A_{k,j}v_k v_i.$$ Note that $V(j)=0$ exactly when $j$ has in-degree one. Then, the following holds.

\begin{thm}
\label{thm:HeatTr}
We have that $\Tr(\mathrm{e}^{-t|D|})<\infty$ if and only if $t>\t_c$, and in fact $t\mapsto \Tr(\mathrm{e}^{-t|D|})$ extends to a meromorphic function of $t\in \C$ with poles of order at most $3N$ located in the set $\mathfrak{T}_A\cup (\t_c\mathrm{LSpec}(A)/\log \lambda_A)$, where $\mathrm{LSpec}(A):=\{t\in \C: \mathrm{e}^t\in \mathrm{Spec}(A)\}$. Moreover, 
$$\lim_{t\to \t_c^+} (t-\t_c)^3\Tr(\mathrm{e}^{-t|D|})=\sum_{j=1}^{N}\Res\limits_{t=\t_c}\Tr(\mathrm{e}^{-t\Delta_{C(j)}})V(j).$$ 
\end{thm}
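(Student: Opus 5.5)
The plan is to diagonalise $|D|$ along the bisection decomposition and reduce the trace to a finite sum of products of one-variable generating functions that are already understood. Since $|D|=\Delta+M_L$ and both $\Delta=\bigoplus_\gamma\Delta_\gamma$ and $M_L$ preserve each summand $L^2(G_\gamma,\mu_\gamma)$, with $M_L$ acting there as the scalar $|\gamma|_c$, we get
\[
\Tr(\mathrm{e}^{-t|D|})=\sum_{\gamma\in I_A}\mathrm{e}^{-t|\gamma|_c}\,\Tr(\mathrm{e}^{-t\Delta_{C(s(\gamma))}}).
\]
This is a sum of positive terms, so rearranging it is harmless.

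\textbf{Step 1: the Laplacian factor depends only on the last letter.} From the proof of Proposition \ref{prop:Laplacian_est} (see \eqref{eq:heat_ord_poles}), $\Tr(\mathrm{e}^{-t\Delta_{C(\beta)}})$ depends only on $j=\beta_{|\beta|}$. The $n=0$ term $\mathrm{e}^{-t\lambda_Au_j}(\mathfrak c(j)-1)$ and the series $\sum_{n\geq 1}A(t)^n_{j,\cdot}$ both depend only on $j$. Hence $\Tr(\mathrm{e}^{-t\Delta_{C(\beta)}})=\Tr(\mathrm{e}^{-t\Delta_{C(j)}})=:T_j(t)$. By Corollary \ref{cortoLaplacian_est}, $T_j$ is finite exactly for $t>\t_c$, is meromorphic on $\C$ with poles of order at most $N$ in $\mathfrak T_A$, and has a simple pole at $\t_c$ with residue $\Res_{t=\t_c}T_j$. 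Grouping the $\gamma$ by $j=s(\gamma)_{|s(\gamma)|}$ gives
\[
\Tr(\mathrm{e}^{-t|D|})=\sum_{j=1}^N T_j(t)\,G_j(s),\qquad s:=\frac{t\log\lambda_A}{\t_c},
\]
where $G_j(s):=\sum_{\gamma=\alpha.\beta\in I_A,\ \beta_{|\beta|}=j}\mathrm{e}^{-s(|\alpha|+|\beta|)}$.

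\textbf{Step 2: $G_j$ as a finite combination of the series of Lemma \ref{lem:Potential_est}.} I split $G_j$ according to the definition of $I_A$.
\begin{itemize}
\item[(a)] The terms with $\alpha=\o$ contribute $\sum_i F(s,i,j)$. Here $\beta$ ranges over words ending in $j$.
\item[(b)] For $\alpha\neq\o$, write $k=\alpha_{|\alpha|}$ and $i=\beta_{|\beta|-1}$, and require $A_{k,j}=1$ and $k\neq i$. If $|\beta|=1$, the constraint is vacuous and the contribution is $\mathrm{e}^{-s}\sum_{k:A_{k,j}=1}\sum_m F(s,m,k)$.
\item[(c)] If $|\beta|\geq 2$, then $\beta=\beta'j$ with $\beta'$ ending in $i$ and $A_{i,j}=1$. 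The contribution is
\[
\mathrm{e}^{-s}\sum_{i\neq k}A_{i,j}A_{k,j}\Big(\sum_m F(s,m,k)\Big)\Big(\sum_{m'}F(s,m',i)\Big).
\]
\end{itemize}
By Lemma \ref{lem:Potential_est}, each $F(s,m,k)$ converges iff $s>\log\lambda_A$, i.e.\ iff $t>\t_c$. Each is meromorphic in $s$ with poles of order at most $N$ in $\mathrm{LSpec}(A)$, that is, in $t\in\t_c\mathrm{LSpec}(A)/\log\lambda_A$.

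\textbf{Step 3: the two-factor term is the only source of a double pole.} The product term in (c) has poles of order at most $2N$. Multiplying by $T_j$, which has poles of order at most $N$, gives the claimed bound $3N$ and the location $\mathfrak T_A\cup(\t_c\mathrm{LSpec}(A)/\log\lambda_A)$ for the poles. Finiteness holds iff $t>\t_c$, since both factors are finite iff $t>\t_c$ and all terms are positive.

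\textbf{Step 4: the leading coefficient.} Terms (a) and (b) have at most a simple pole at $s=\log\lambda_A$. So the only contribution to the coefficient of $(t-\t_c)^{-3}$ comes from $T_j$ times term (c). Using $\sum_m u_m=1$ and Lemma \ref{lem:Potential_est},
\[
\sum_m F(s,m,k)\sim \frac{v_k}{\lambda_A(s-\log\lambda_A)}.
\]
Also $\mathrm{e}^{-s}\to\lambda_A^{-1}$, and $s-\log\lambda_A=\frac{\log\lambda_A}{\t_c}(t-\t_c)$. Therefore term (c) behaves like
\[
\frac{\t_c^2}{\lambda_A^3\log^2\lambda_A}\sum_{i\neq k}A_{i,j}A_{k,j}v_kv_i\,(t-\t_c)^{-2}=V(j)(t-\t_c)^{-2}.
\]
Multiplying by $(t-\t_c)T_j(t)\to\Res_{t=\t_c}T_j$ and summing over $j$ gives the stated limit.

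The main obstacle I expect is the bookkeeping in Step 2. One has to enumerate $I_A$ correctly, in particular the constraint $\alpha_{|\alpha|}\neq\beta_{|\beta|-1}$ and the edge cases $|\beta|=1$ and $\alpha=\o$. One also has to check that these edge cases genuinely contribute at most simple poles, so that they do not affect the cubic coefficient.
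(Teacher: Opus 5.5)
Your proposal is correct and follows essentially the same route as the paper: factor $\Tr(\mathrm{e}^{-t|D|})$ as $\sum_j \Tr(\mathrm{e}^{-t\Delta_{C(j)}})$ times a generating function over $I_A$, and express that generating function through the series $F$ of Lemma \ref{lem:Potential_est}. The cubic pole then comes from the single product term with $k\neq i$, giving exactly $V(j)$.

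The differences are only in bookkeeping and presentation. The paper first sums over $\alpha$ for fixed $\beta$ and observes that the result depends only on the last two letters of $\beta$. You instead split into the cases $\alpha$ empty, $|\beta|=1$, and $|\beta|\geq 2$. You also make explicit a point the paper leaves implicit: the remaining terms have at most simple poles at $\t_c$, because $\lambda_A$ is a simple eigenvalue.
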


\begin{proof}
First, note that for every $\beta \in V_A\setminus \{{\o}\}$, one has $$\Tr(\mathrm{e}^{-t\Delta_{C(\beta)}})=\Tr(\mathrm{e}^{-t\Delta_{C(\beta_{|\beta|})}}).$$ Consequently,

\begin{align*}
\Tr(\mathrm{e}^{-t|D|})&=\sum_{\alpha.\beta \in I_A}\Tr(\mathrm{e}^{-t(\Delta_{\alpha.\beta}+|\alpha|_c+|\beta|_c)})\\
&=\sum_{\alpha.\beta \in I_A} \mathrm{e}^{-t(|\alpha|_c+|\beta|_c)}\Tr(\mathrm{e}^{-t\Delta_{\alpha.\beta}})\\
&= \sum_{\alpha.\beta \in I_A}\mathrm{e}^{-t(|\alpha|_c+|\beta|_c)}\Tr(\mathrm{e}^{-t\Delta_{C(\beta_{|\beta|})}})\\
&=\sum_{j=1}^{N}\Tr(\mathrm{e}^{-t\Delta_{C(j)}})\sum_{\alpha.\beta\in I_A, \beta_{|\beta|}=j}\mathrm{e}^{-t(|\alpha|_c+|\beta|_c)}.
\end{align*}
From Proposition \ref{prop:Laplacian_est} and Lemma \ref{lem:Potential_est} we see that $\Tr(\mathrm{e}^{-t|D|})<\infty$ if and only if $t>\t_c$. Our goal now is to find asymptotics at $t=\t_c$. 

To this end, recall the function $F$ from Lemma \ref{lem:Potential_est}, and for $j\in \{1,\ldots,N\}$ define 
\begin{equation}\label{eq:HeatTr0}
F_c(t,j):=\sum_{i=1}^{N}F(t(\t_c^{-1} \log \lambda_A), i,j).
\end{equation}
It is clear that $F_c(t,j)<\infty$ if and only if $t>\t_c.$ Also, since $\sum_{i=1}^{N} u_i=1$ we have that 
\begin{equation}\label{eq:HeatTr1}
\lim_{t\to \t_c^+}(t-\t_c)F_c(t,j)=\frac{v_j \t_c}{\lambda_A \log \lambda_A}.
\end{equation}
Further, for $\beta\in V_A$ with $|\beta|\geq 2$ we have
\begin{align}\label{eq:HeatTr2}
\sum_{\alpha: \alpha.\beta\in I_A} \mathrm{e}^{-t|\alpha|_c}&=1+\sum_{{\o}\neq \alpha: \alpha.\beta\in I_A} \mathrm{e}^{-t|\alpha|_c}\\ \nonumber
&=1+\sum_{\alpha\in V_A\setminus \{{\o}\}}(1-\delta_{\alpha_{|\alpha|},\beta_{|\beta|-1}} )A_{\alpha_{|\alpha|},\beta_{|\beta|}} \mathrm{e}^{-t|\alpha|_c}\\ \nonumber
&=1+\sum_{k\neq \beta_{|\beta|-1}}A_{k,\beta_{|\beta|}}F_c(t,k).
\end{align}
In particular, the sum $\sum_{\alpha: \alpha.\beta\in I_A} \mathrm{e}^{-t|\alpha|_c}$ depends only on the last two letters of $\beta$. Also, let $W_A(i,j)$ denote all admissible words $\beta$ such that $|\beta|\geq 2$ and $\beta_{|\beta|-1}=i, \beta_{|\beta|}=j$. Then, 
\begin{equation}\label{eq:HeatTr3}
\sum_{\beta\in W_A(i,j)}\mathrm{e}^{-t|\beta|_c}=A_{i,j}\mathrm{e}^{-t(\t_c^{-1}\log \lambda_A)} F_c(t,i).
\end{equation}
Combining \eqref{eq:HeatTr2} and \eqref{eq:HeatTr3} we obtain that 
\small
\begin{align*}
\noindent \sum_{\alpha.\beta\in I_A, \beta_{|\beta|}=j}\mathrm{e}^{-t(|\alpha|_c+|\beta|_c)}&=\\
\mathrm{e}^{-t(\t_c^{-1}\log \lambda_A)}&\left(1+\sum_{k=1}^{N}A_{k,j}F_c(t,k)+\sum_{i=1}^{N}A_{i,j}\left(1+\sum_{k\neq i}A_{k,j}F_c(t,k)\right)F_c(t,i)\right).
\end{align*}
\normalsize
The limit statement as $t\to t_c^+$ now follows from \eqref{eq:HeatTr1} and the meromorphic extension statement follows from Corollary \ref{cortoLaplacian_est} and Lemma \ref{lem:Potential_est}.
\end{proof}

\section{Local heat trace asymptotics, or all roads lead to the KMS-state}

We now turn our attention to computing local heat trace asymptotics. We shall study three different types of localisations of the heat trace, and their common feature is that the leading term behaviour determines the KMS-state $\varphi_A$. In Subsection \ref{localI}, we study the local heat traces $\Tr(a\mathrm{e}^{-t|D|})$. This object is called a local heat trace following the convention on manifolds where the asymptotics of objects like $\Tr(a\mathrm{e}^{-t|D|})$ is governed by integrals of $a$ against curvature-like terms. In Subsection \ref{localII} we study Dixmier traces, regularized versions of traces, that by the analogy with manifolds can also be called local. Finally, in Subsection \ref{localIII} we study local heat traces on the positive part $\Tr(P_Da\mathrm{e}^{-t|D|})$, where $P_D=\chi_{[0,\infty)}(D)$. This object was studied in detail in \cite{GRU} and we revisit it here for completeness, to drive the point home that all roads lead to the KMS-state $\varphi_A$.

\subsection{Local heat trace asymptotics for $|D|$}
\label{localI}

Our next goal is to compute the leading asymptotic behavior of $\Tr(a\mathrm{e}^{-t|D|})$, for $a\in \LC$. In particular, the states 
\begin{equation}
\label{alknaldna}
\varphi_t(a):=\frac{\Tr(a\mathrm{e}^{-t|D|})}{\Tr(\mathrm{e}^{-t|D|})}, \quad t>\t_c,
\end{equation}
 will be shown to have a well defined limit as $t\to \t_c^+$. Following the proof of Theorem \ref{thm:HeatTr} we arrive at the following.

\begin{thm}
\label{thm:HeatTrwitha}
For $a\in \LC$, we have that $\Tr(a\mathrm{e}^{-t|D|})$ converges for $t>\t_c$. Moreover, for $a=S_w S_\nu^*$ we have that 
\begin{enumerate}
\item The function $t\mapsto \Tr(a\mathrm{e}^{-t|D|})$ extends to a meromorphic function of $t\in \C$ with poles of order at most $3N$ located in the set $\mathfrak{T}_A\cup (\t_c\mathrm{LSpec}(A)/\log \lambda_A)$, where $\mathrm{LSpec}(A):=\{t\in \C: \mathrm{e}^t\in \mathrm{Spec}(A)\}$. 
\item If at least one of $w$ or $\nu$ is non-empty, then as $t\to \t_c^+$,
\begin{equation}
\label{lknalkdnaljbn}
\Tr(S_w S_\nu^*\mathrm{e}^{-t|D|})=\delta_{w,\nu}\lambda_Au_{w_{|w|}}\mathrm{e}^{-t|w|_c}\Tr(\mathrm{e}^{-t|D|})+O((t-\t_c)^{-2}).
\end{equation}
\end{enumerate}
In particular, the family of states \eqref{alknaldna} converge as $t\to \t_c^+$ to the standard KMS-state $\varphi_A$ on $O_A$, given by 
$$\varphi_A(S_w S_\nu^*):= \delta_{w,\nu}\lambda_A^{-|w|+1}u_{w_{|w|}},$$
when at least one of $w$ or $\nu$ is non-empty.
\end{thm}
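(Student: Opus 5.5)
We compute $\Tr(a\mathrm{e}^{-t|D|})$ in the orthonormal eigenbasis $(\mathrm{e}_\gamma,\mathrm{e}_{(\gamma,\nu',j)})$ of Subsection \ref{subsec:ham}. This basis diagonalises $|D|=\Delta+M_L$, with eigenvalues $|\gamma|_c$ and $|\gamma|_c+\lambda^A_{s(\gamma)}(\nu')$. Hence
$$\Tr(a\mathrm{e}^{-t|D|})=\sum_{\gamma}\mathrm{e}^{-t|\gamma|_c}\langle \mathrm{e}_\gamma,a\mathrm{e}_\gamma\rangle+\sum_{(\gamma,\nu',j)}\mathrm{e}^{-t(|\gamma|_c+\lambda^A_{s(\gamma)}(\nu'))}\langle \mathrm{e}_{(\gamma,\nu',j)},a\mathrm{e}_{(\gamma,\nu',j)}\rangle .$$
By linearity it suffices to treat $a=S_wS_\nu^*$. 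Convergence for $t>\t_c$ follows at once, since $|\langle e,ae\rangle|\leq\|a\|$ and $\Tr(\mathrm{e}^{-t|D|})<\infty$ for $t>\t_c$ by Theorem \ref{thm:HeatTr}.

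\textbf{Diagonal matrix coefficients.} The next step uses Theorem \ref{thm:O_A_action} to decide when these coefficients are non-zero. Each $S_i$ and $S_i^*$ sends basis vectors to basis vectors or to $0$, apart from case (3) applied to $\mathrm{e}_\gamma$. On generic $\gamma$, namely those with $|r(\gamma)|\geq|\nu|$ (or $|r(\gamma)|\geq |w|$), the operator $S_\nu^*$ strips the prefix $\nu$ off $r(\gamma)$ and $S_w$ prepends $w$.

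If $|w|\neq|\nu|$, the gauge degree $c|_{G_\gamma}$ changes, so all diagonal coefficients vanish. If $|w|=|\nu|$ but $w\neq\nu$, then $r(\gamma)$ changes, so again the coefficient vanishes for generic $\gamma$.

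If $w=\nu$, then $S_wS_w^*$ fixes exactly the basis vectors with $r(\gamma)\in wV_A$ and kills the other generic ones.

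The remaining non-generic $\gamma$ have $|r(\gamma)|<|w|$, including $r(\gamma)=\o$. There $S_wS_\nu^*$ acts, through Remark \ref{rem: char_functions}, as multiplication by characteristic functions $\chi_{s_\gamma^{-1}(C(w'))}$ composed with a shift of $\gamma$. Their contribution is a finite sum over short $r(\gamma)$ of terms
$$\mathrm{e}^{-t|r(\gamma)|_c}\sum_{\beta}\mathrm{e}^{-t|\beta|_c}\Tr(\chi_{C(w')}\mathrm{e}^{-t\Delta_{C(\beta)}}).$$
By Corollary \ref{cortoLaplacian_est} and Lemma \ref{lem:Potential_est}, these are meromorphic with poles in the stated set. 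Because the sum over $r(\gamma)$ is finite, one factor $F_c$ is missing compared to Theorem \ref{thm:HeatTr}, so their pole at $\t_c$ has order at most $2$.

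\textbf{Leading term.} For the main term with $w=\nu$, we repeat the computation in the proof of Theorem \ref{thm:HeatTr}, restricting the $\alpha$-sum to $\alpha=w\alpha'$. This yields the factor $\mathrm{e}^{-t|w|_c}$ times a sum of the form
$$\sum_{\alpha'} \mathrm{e}^{-t|\alpha'|_c}.$$
Its constraint is $A_{w_{|w|},\alpha'_1}=1$ together with the same end conditions as before; if $\alpha'=\o$, the condition instead falls on $w$ and only contributes lower order. Each $F_c(t,k)$ appearing in \eqref{eq:HeatTr2} gets replaced by $\sum_i A_{w_{|w|},i}F(t\t_c^{-1}\log\lambda_A,i,k)$. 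Lemma \ref{lem:Potential_est} gives the residue of this sum as
$$\frac{\t_c}{\log\lambda_A}\sum_iA_{w_{|w|},i}\frac{u_iv_k}{\lambda_A}=\frac{\t_c}{\log\lambda_A}\,u_{w_{|w|}}v_k,$$
using $Au=\lambda_Au$. This is $\lambda_Au_{w_{|w|}}$ times the residue \eqref{eq:HeatTr1} (recall $\sum_iu_i=1$). The triple pole is therefore $\lambda_Au_{w_{|w|}}\mathrm{e}^{-t|w|_c}$ times that of $\Tr(\mathrm{e}^{-t|D|})$, which gives \eqref{lknalkdnaljbn}. Item (1) follows because every piece is a finite combination of the meromorphic functions from Corollary \ref{cortoLaplacian_est} and Lemma \ref{lem:Potential_est}, with at most three resolvent-type factors.

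\textbf{Convergence to $\varphi_A$.} Divide by $\Tr(\mathrm{e}^{-t|D|})\sim C(t-\t_c)^{-3}$ and let $t\to\t_c^+$. Since $\mathrm{e}^{-\t_c|w|_c}=\lambda_A^{-|w|}$, we obtain $\varphi_t(S_wS_\nu^*)\to\delta_{w,\nu}\lambda_A^{-|w|+1}u_{w_{|w|}}=\varphi_A(S_wS_\nu^*)$. Weak$^*$ convergence then follows by density of $\LC$ and $\|\varphi_t\|=1$.

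The main obstacle is the bookkeeping for the non-generic $\gamma$, in particular case (3) of Theorem \ref{thm:O_A_action}, where $S_i$ shortens $s(\gamma)$ and produces non-basis vectors. I would handle it by expressing these contributions as localised cylinder heat traces and bounding their pole order by $2$, as described above.
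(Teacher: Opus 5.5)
Your proposal is correct and is essentially the paper's proof. You split the bisections into those with $r(\gamma)\in wV_A$, which give the main term: restricting the $\alpha$-sum and using $Au=\lambda_A u$ in Lemma \ref{lem:Potential_est} produces the factor $\lambda_A u_{w_{|w|}}\mathrm{e}^{-t|w|_c}$. The remaining bisections, with $r(\gamma)$ a proper prefix of $w$, give localised cylinder heat traces from Corollary \ref{cortoLaplacian_est} with at most a double pole. This is exactly the paper's decomposition via $\kappa_{\alpha.\beta}^*(\chi_{C(w)})$, and your eigenbasis and gauge framing is only cosmetic.

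One side remark is inaccurate but harmless. Theorem \ref{thm:O_A_action} says nothing about $S_i^*$, and $S_i^*$ does not in general map basis vectors to basis vectors: for instance, $S_{\beta_{|\beta|}}^*\mathrm{e}_{({\o}.\beta,\beta,j)}$ is a non-trivial combination of the vectors $\mathrm{e}_{{\o}.\beta k}$. Your argument never needs this, since you only use that $S_wS_w^*$ acts on each $L^2(G_\gamma)$ as multiplication by $\chi_{C(w)}\circ r$.
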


\begin{proof}
First, note that for every distinct $w,\nu \in V_A$, one has 
$$\Tr(S_w S_\nu^*\mathrm{e}^{-t|D|})=0.$$
So we will assume that $w=\nu$ and $a=S_w S_w^*$. Theorem \ref{thm:HeatTr} already proves the case that $w$ is empty, so we assume that $w$ is non-empty.

We have that 
$$\Tr_{L^2(G_A)}(S_w S_w^*\mathrm{e}^{-t|D|})=\sum_{\alpha.\beta\in I_A} \mathrm{e}^{-t(|\alpha|_c+|\beta|_c)} \Tr_{L^2(C(\beta))}\left(\kappa_{\alpha.\beta}^*(\chi_{C(w)}) \mathrm{e}^{-t\Delta_{C(\beta)}}\right)$$
where $\kappa_{\alpha.\beta}:C(\beta)\to \Omega_A$ is defined through the restricted range and source maps $r_{\alpha.\beta}$ and $s_{\alpha.\beta}$ on $G_{\alpha.\beta}$ as
$$\kappa_{\alpha.\beta}:=r_{\alpha.\beta}\circ s_{\alpha.\beta}^{-1}.$$
We compute that 
$$\kappa_{\alpha.\beta}^*(\chi_{C(w)})=\chi_{\kappa_{\alpha.\beta}^{-1}(C(w))},$$
and for $\alpha.\beta\in I_A$
\begin{align*}
\kappa_{\alpha.\beta}^{-1}(C(w))=&\{y\in C(\beta): \, \alpha \sigma^{|\beta|-1}(y)\in C(w)\}=\\
=&
\begin{cases}
C_\beta, \; &\mbox{if $\alpha\in w V_A$},\\
C_{\beta\bar{\bar{w}}}\; &\mbox{if $w\in \alpha V_A\setminus \{\alpha\}$ with $w=\alpha\bar{w}$ and $\bar{w}=\beta_{|\beta|}\bar{\bar{w}}$},\\
\emptyset, \; &\mbox{otherwise}.\end{cases}
\end{align*}

Consequently,
\begin{align*}
\Tr_{L^2(G_A)}(S_w S_w^*\mathrm{e}^{-t|D|})=&\sum_{\substack{\alpha.\beta\in I_A\\ \alpha\in w V_A}} \mathrm{e}^{-t(|\alpha|_c+|\beta|_c)} \Tr_{L^2(C(\beta))}\left(\mathrm{e}^{-t\Delta_{C(\beta)}}\right)\\
&+\sum_{\substack{\beta\in V_A\setminus \{\o\}\\ \beta_{|\beta|}=w_1}} \mathrm{e}^{-t|\beta|_c} \Tr_{L^2(C(\beta))}\left(\chi_{C(\beta\bar{\bar{w}})} \mathrm{e}^{-t\Delta_{C(\beta)}}\right)\\
&+\sum_{l=1}^{|w|-1}\sum_{\substack{\beta\in V_A\setminus \{\o\}\\ w_{l+1}=\beta_{|\beta|}, \\ w_l\neq \beta_{|\beta|-1}}} \mathrm{e}^{-t\left(l \t_c^{-1}\log \lambda_A +|\beta|_c\right)} \Tr_{L^2(C(\beta))}\left(\chi_{C(\beta\bar{\bar{w}})} \mathrm{e}^{-t\Delta_{C(\beta)}}\right).
\end{align*}
where the last two terms arise from $w\in \alpha V_A\setminus \{\alpha\}$, where we reconstruct $\alpha=w_1\cdots w_l$ from $l=|\alpha|$. At this stage, in light of Corollary \ref{cortoLaplacian_est}, we note that a similar proof as that of Theorem \ref{thm:HeatTr} combined with the computations we do next, to compute the limit as $t\to \t_c^+$, in fact proves that $t\mapsto \Tr(a\mathrm{e}^{-t|D|})$ extends to a meromorphic function of $t\in \C$ with poles of order at most $3N$ located in the set $\mathfrak{T}_A\cup (\t_c\mathrm{LSpec}(A)/\log \lambda_A)$. 

For the limit as $t\to \t_c^+$, it follows from Corollary \ref{cortoLaplacian_est} and Lemma \ref{lem:Potential_est} that
\begin{align*}
\sum_{\substack{\beta\in V_A\setminus \{\o\}\\ \beta_{|\beta|}=w_1}} \mathrm{e}^{-t|\beta|_c} \Tr_{L^2(C(\beta))}\left(\chi_{C(\beta\bar{\bar{w}})} \mathrm{e}^{-t\Delta_{C(\beta)}}\right)=&O((t-\t_c)^{-2}), \quad\mbox{and}\\
\sum_{l=1}^{|w|-1}\sum_{\substack{\beta\in V_A\setminus \{\o\}\\ w_{l+1}=\beta_{|\beta|}, \\ w_l\neq \beta_{|\beta|-1}}} \mathrm{e}^{-t\left(l \t_c^{-1}\log \lambda_A +|\beta|_c\right)} \Tr_{L^2(C(\beta))}\left(\chi_{C(\beta\bar{\bar{w}})} \mathrm{e}^{-t\Delta_{C(\beta)}}\right)=&O((t-\t_c)^{-2}).
\end{align*}
As a result,
\begin{align}
\nonumber
\Tr_{L^2(G_A)}(S_w S_w^*&\mathrm{e}^{-t|D|})=\sum_{\substack{\alpha.\beta\in I_A\\ \alpha\in w V_A}} \mathrm{e}^{-t(|\alpha|_c+|\beta|_c)} \Tr_{L^2(C(\beta))}\left(\mathrm{e}^{-t\Delta_{C(\beta)}}\right)+O((t-\t_c)^{-2})\\
\label{computedinisdn}
=&\sum_{j=1}^N\Tr_{L^2(C(j))}\left(\mathrm{e}^{-t\Delta_{C(j)}}\right)\sum_{\substack{\alpha.\beta\in I_A\\ \alpha\in w V_A,\\ \beta_{|\beta|}=j}} \mathrm{e}^{-t(|\alpha|_c+|\beta|_c)} +O((t-\t_c)^{-2}).
\end{align}

We now utilize the function $F_c(t, i,j):=F(t(\t_c^{-1} \log \lambda_A), i,j)$ and the function $F_c(t,j):=\sum_{i=1}^{N}F(t(\t_c^{-1} \log \lambda_A), i,j)$ used in the proof of Theorem \ref{thm:HeatTr}. For a fixed $\beta\in V_A$ with $|\beta|\geq 2$ we have
\begin{align}\label{eq:HeatTr2new}
\sum_{\substack{\alpha.\beta\in I_A\\ \alpha\in w V_A}} \mathrm{e}^{-t|\alpha|_c}&=\mathrm{e}^{-t|w|_c}+\sum_{\substack{\alpha.\beta\in I_A\\ \alpha\in w (V_A\setminus \{\o\})}} \mathrm{e}^{-t|\alpha|_c}\\ \nonumber
&=\mathrm{e}^{-t|w|_c}+\sum_{\alpha\in  w (V_A\setminus \{\o\})}(1-\delta_{\alpha_{|\alpha|},\beta_{|\beta|-1}} )A_{\alpha_{|\alpha|},\beta_{|\beta|}} \mathrm{e}^{-t|\alpha|_c}\\ \nonumber
&=\mathrm{e}^{-t|w|_c}\left(1+\sum_{\alpha\in  V_A\setminus \{\o\}}(1-\delta_{\alpha_{|\alpha|},\beta_{|\beta|-1}} )A_{w_{|w|},\alpha_1}A_{\alpha_{|\alpha|},\beta_{|\beta|}} \mathrm{e}^{-t|\alpha|_c}\right)\\ \nonumber
&=\mathrm{e}^{-t|w|_c}\left(1+\sum_{k\neq \beta_{|\beta|-1}}\sum_{l=1}^NA_{w_{|w|},l}A_{k,\beta_{|\beta|}}F_c(t,l,k)\right).
\end{align}
In particular, the sum $\sum_{\substack{\alpha.\beta\in I_A\\ \alpha\in w V_A}} \mathrm{e}^{-t|\alpha|_c}$ depends only on the last two letters of $\beta$ for fixed $w$. Also, let $W_A(i,j)$ denote all admissible words $\beta$ such that $|\beta|\geq 2$ and $\beta_{|\beta|-1}=i, \beta_{|\beta|}=j$. Then, combining \eqref{eq:HeatTr2new} with \eqref{eq:HeatTr1} and \eqref{eq:HeatTr3} we obtain that as $t\to \t_c^+$
\small
\begin{align*}
\sum_{\substack{\alpha.\beta\in I_A\\ \alpha\in w V_A,\\ \beta_{|\beta|}=j}} &\mathrm{e}^{-t(|\alpha|_c+|\beta|_c)} =\\
=&\mathrm{e}^{-t|w|_c}\mathrm{e}^{-t(\t_c^{-1}\log \lambda_A)}&\sum_{i=1}^{N}\sum_{k\neq i}\sum_{l=1}^NA_{i,j}A_{w_{|w|},l}A_{k,j}F_c(t,l,k)F_c(t,i)+O((t-\t_c)^{-1}).
\end{align*}
\normalsize
Consequently, we have that $$\lim_{t\to \t_c^+}(t-\t_c)^2\sum_{\substack{\alpha.\beta\in I_A\\ \alpha\in w V_A,\\ \beta_{|\beta|}=j}} \mathrm{e}^{-t(|\alpha|_c+|\beta|_c)}= \lambda_A^{-|w|+1}u_{w_{|w|}}V(j),$$

which together with \eqref{computedinisdn} and Corollary \ref{cortoLaplacian_est} implies that \eqref{lknalkdnaljbn} holds. 
\end{proof}

\subsection{Noncommutative integral formula for the canonical KMS-state}
\label{localII}

We move on to describe the KMS-state of the gauge action on $O_A$ in terms of Dixmier traces. For a compact operator $T$, we let $(\mu_k(T))_k$ denote its sequence of singular values ordered decreasingly. We define the Dixmier--Macaev ideal of compact operators $\mathcal{M}_{1,\infty}$ on a separable Hilbert space $\mathcal{H}$ as 
$$\mathcal{M}_{1,\infty}(\mathcal{H}):=\left\{T\in \mathbb{K}(\mathcal{H}): \sup_K \frac{1}{\log(2+K)}\sum_{n=1}^K \mu_n(T)<\infty\right\}.$$
A state $\omega\in \ell^{\infty}(\N)^*$ is said to be an extended limit if $\omega|_{c_0(\N)}=0$, or in other words $\omega$ is an extension of the limiting functional from the subspace of convergent sequences to all of $\ell^{\infty}(\N)$. If $\omega((x_n)_n)=\omega((x_{2n})_n)$ we say that $\omega$ is dilation invariant. It is well-known, see for instance \cite{LSZ}, that a dilation invariant extended limit $\omega$ defines a trace 
$$\Tr_\omega:\mathcal{M}_{1,\infty}(\mathcal{H})\to \C,$$
by setting $\Tr_\omega(T):=\omega((\frac{1}{\log(2+K)}\sum_{n=1}^K \mu_n(T))_K)$ for $T\geq 0$ and extended by linearity. The trace $\Tr_\omega$ is called a Dixmier trace and was first constructed by Dixmier, and later elevated to the role of an integral in Connes' noncommutative geometry \cite{connesredbook}. For more context, see \cite{connesredbook,LSZ}.

\begin{thm}
\label{thm:Dixmier}
Let $\omega$ be a dilation invariant extended limit on $\ell^{\infty}(\N)$. Also, recall the constant
$C=\lim_{t\to \t_c} (t-\t_c)^3\Tr(\mathrm{e}^{-t|D|})$ which is explicitly described in Theorem \ref{thm:HeatTr}. Then, for every $a\in O_A$ it holds that 
$$a |D|^{-2}\mathrm{e}^{-\t_c |D|}\in \mathcal{M}_{1,\infty}(L^2(G_A,\mu_{G_A})),$$
and 
$$\varphi_A(a)=\frac{2\t_c}{C}\Tr_{\omega}(a |D|^{-2}\mathrm{e}^{-\t_c |D|}).$$ 
\end{thm}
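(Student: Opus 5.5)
The plan is to convert the heat trace asymptotics of Theorem \ref{thm:HeatTr} and Theorem \ref{thm:HeatTrwitha} into Dixmier trace statements by a Tauberian argument applied to the positive operator $T:=|D|^{-2}\mathrm{e}^{-\t_c|D|}$. Note $|D|\geq c>0$ since $M_L\geq (\log\lambda_A/\t_c)$, so $T$ is bounded. The key identity is that $\Tr(T^{s})$ is not directly comparable to the heat trace. Instead we use the function $f(t):=\Tr(\mathrm{e}^{-(\t_c+t)|D|})=C t^{-3}+O(t^{-2})$ as $t\to 0^+$. Integrating twice in $t$ gives
$$\Tr(|D|^{-2}\mathrm{e}^{-(\t_c+t)|D|})=\int_t^\infty\!\!\int_{s}^\infty f(r)\,\mathrm{d}r\,\mathrm{d}s=\tfrac{C}{2}t^{-1}+O(\log(1/t)).$$
This is legitimate by monotone convergence on the eigenbasis, with finiteness for large $t$ because $\Tr(\mathrm{e}^{-t|D|})$ decays exponentially.

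Next, write the eigenvalues of $|D|$ as $\theta_j$ and set $\mu_j:=\theta_j^{-2}\mathrm{e}^{-\t_c\theta_j}$, the singular values of $T$. The previous display says that
$$\sum_j \mu_j\,\mathrm{e}^{-t\theta_j}=\tfrac{C}{2}t^{-1}+O(\log(1/t)).$$
Since $\mathrm{e}^{-t\theta_j}=(\mathrm{e}^{-\t_c\theta_j})^{t/\t_c}$ and $\mathrm{e}^{-\t_c\theta_j}=\theta_j^2\mu_j$, this is, up to the slowly varying factor $\theta_j^{2t/\t_c}$, a statement about $\Tr(T^{1+t/\t_c})$. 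I would make this precise by comparing $\Tr(T^{1+s})$ with $\Tr(T\mathrm{e}^{-s\t_c|D|})$, using $\theta_j^{-2s}=\mathrm{e}^{-2s\log\theta_j}$ and $\log\theta_j=o(\theta_j)$. The goal is $\Tr(T^{1+s})\sim \frac{C}{2\t_c}s^{-1}$ as $s\to0^+$; the correction factor should only contribute lower order, via a splitting of the sum at $\theta_j\sim s^{-1/2}$ or similar.

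The standard zeta-function characterization of $\mathcal{M}_{1,\infty}$ then applies (see \cite{LSZ}). The bound $\limsup_{s\to 0} s\Tr(T^{1+s})<\infty$ gives $T\in\mathcal{M}_{1,\infty}$, and hence $aT\in \mathcal{M}_{1,\infty}$ for every bounded $a$, in particular every $a\in O_A$. Moreover, when $\lim_{s\to0} s\Tr(T^{1+s})$ exists, every Dixmier trace satisfies $\Tr_\omega(T)=\lim_{s\to 0}s\Tr(T^{1+s})=\frac{C}{2\t_c}$, which is the case $a=1$.

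For general $a$, I use the localized version: for $0\leq a$ bounded, $\Tr_\omega(aT)=\lim_{s\to0} s\Tr(aT^{1+s})$ whenever the right side exists, again by \cite{LSZ}. By linearity and continuity in $a$ it suffices to treat $a=S_wS_\nu^*$; both sides of the claimed formula are bounded linear functionals on $O_A$, since $|\Tr_\omega(aT)|\leq\|a\|\Tr_\omega(T)$. For $w\neq\nu$ both sides vanish, because the diagonal matrix coefficients in the eigenbasis vanish, as in the proof of Theorem \ref{thm:HeatTrwitha}. For $a=S_wS_w^*$, a projection, Theorem \ref{thm:HeatTrwitha} gives $\Tr(a\mathrm{e}^{-(\t_c+t)|D|})=\varphi_A(a)Ct^{-3}+O(t^{-2})$. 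Repeating the double integration and the comparison above with $a$ inserted, using positivity of $\langle e_n,ae_n\rangle$, yields $\Tr_\omega(aT)=\varphi_A(a)\frac{C}{2\t_c}$.

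The main obstacle is the comparison step: passing from the Laplace-type asymptotics in $t$ to asymptotics for $\Tr(T^{1+s})$ despite the polynomial factor $\theta_j^{-2}$. If the direct splitting proves awkward, I would instead invoke a Tauberian theorem (Hardy–Littlewood/Karamata) applied to the measure $\sum_j\mu_j\delta_{\theta_j}$. This gives $\sum_{\theta_j\leq\theta}\mu_j\sim\frac{C}{2}\t_c^{-1}\cdots$, or rather $\sim\frac{C}{2}\theta$ in the variable $\theta$. That can then be converted, via $\log(1/\mu_j)\sim\t_c\theta_j$, into $\sum_{n\leq K}\mu_n\sim\frac{C}{2\t_c}\log K$. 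This gives the Dixmier trace directly, even without dilation invariance.
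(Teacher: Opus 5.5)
Your proposal is correct, and its skeleton matches the paper's. The local heat trace expansion $\Tr(a\mathrm{e}^{-(\t_c+t)|D|})=C\varphi_A(a)t^{-3}+O(t^{-2})$ from Theorems \ref{thm:HeatTr} and \ref{thm:HeatTrwitha} is converted into the residue $\lim_{s\to0^+}s\Tr(aT^{1+s})=\frac{C}{2\t_c}\varphi_A(a)$. The zeta-residue results of \cite{LSZ,gaysuk} then give $T\in\mathcal{M}_{1,\infty}$ and the Dixmier trace.

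The difference is in the middle step. The paper writes $T^{1+s}=\Gamma(2+2s)^{-1}\int_0^\infty x^{1+2s}\mathrm{e}^{-(x+(1+s)\t_c)|D|}\,\mathrm{d}x$, so $s\Tr(aT^{1+s})$ becomes an explicit integral of the local heat trace. The exponent $-2-2s$ on $|D|$ is produced exactly by the Mellin kernel, and the limit comes out of $\int_0^{x_0}x^{1+2s}(x+s\t_c)^{-3}\,\mathrm{d}x$ with no spectral comparison at all.

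Your double integration only produces $|D|^{-2}$, so you need an extra comparison to absorb $|D|^{-2s}$. This does work, but the cut must sit well above the scale $1/s$, which is where the weighted spectral mass lives; a cut at $s^{-1/2}$ alone does not suffice. Let $\lambda_0>0$ be the bottom of the spectrum of $|D|$.
\begin{itemize}
\item On $\theta_j\le s^{-2}$ you have $|\theta_j^{-2s}-1|\le\max\{|\lambda_0^{-2s}-1|,1-s^{4s}\}\to0$ uniformly.
\item On $\theta_j>s^{-2}$, the bound $\mathrm{e}^{-s\t_c\theta_j}\le\mathrm{e}^{-\t_c/(2s)}\mathrm{e}^{-s\t_c\theta_j/2}$ makes the tail $O(s^{-1}\mathrm{e}^{-\t_c/(2s)})$.
\end{itemize}
With $|\langle e_j,ae_j\rangle|\le\|a\|$ this comparison works for every bounded $a$, so positivity of $a$ is not needed. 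In any case, the residue formula from \cite{LSZ} has to be invoked for arbitrary bounded $a$, including $S_wS_\nu^*$ with $w\neq\nu$, exactly as the paper does.

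Your Karamata fallback would give slightly more than the paper states for $a=1$: genuine convergence of $\frac{1}{\log K}\sum_{n\le K}\mu_n(T)$, hence the value of $\Tr_\omega(T)$ for every extended limit. The conversion step, however, needs $\theta_K\sim\t_c^{-1}\log K$, that is, $\log\mathcal{N}(\theta)\sim\t_c\theta$. This must be derived from $\sum_{\theta_j\le\theta}\mu_j\sim\frac{C}{2}\theta$, for instance by summing over windows $((1-h)\theta,\theta]$. It does not follow from the tautology $\log(1/\mu_j)\sim\t_c\theta_j$. Also, for $a\neq1$ the fallback still needs an identification result from \cite{LSZ}.
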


In particular, we note that $a |D|^{-2}\mathrm{e}^{-\t_c |D|}$ is Dixmier measurable for any $a\in O_A$.

\begin{proof}
By a density argument we can assume that $a\in C_c^{\infty}(G_A)$. We denote $T:=|D|^{-2}\mathrm{e}^{-\t_c |D|}\geq 0$. Recall that the spectrum of $|D|$ is bounded below by some $\lambda_0>0$. We aim to show that 
\begin{equation}
\label{eq:Dixmier1}
\lim_{t\to 0^+} t \Tr(aT^{1+t})= \frac{C}{2\t_c}\varphi_A(a).
\end{equation}
From \eqref{eq:Dixmier1} (with $a=1$ and \cite[Proposition 2.17]{gaysuk} we can conclude that $T\in \mathcal{M}_{1,\infty}$. From \cite[Corollary 8.6.9]{LSZ}, see also \cite[Theorem 3.4]{gaysuk}, the limit \eqref{eq:Dixmier1} equals $\Tr_{\omega}(aT)$. By continuity, this extends to all $a\in O_A$.

Following Theorems \ref{thm:HeatTr} and \ref{thm:HeatTrwitha}, there is $C_a\in \mathbb C$ such that 
\begin{equation}
\label{eq:Dixmier2}
G_a(t):=\Tr(a\mathrm{e}^{-(\t_c+t)|D|})=C_a t^{-3}+O(t^{-2}),\qquad \text{as }\,\,t\to 0^+.
\end{equation}
In particular, for $a=1$ we have that $C_1$ equals the constant $C$ in the statement. Take $t>0$ and from the Mellin transform we get 
\begin{equation}
\label{eq:Dixmier3}
|D|^{-2-2t}\mathrm{e}^{-(1+t)\t_c|D|}=\frac{1}{\Gamma(2+2t)}\int_{0}^{\infty}x^{1+2t}\mathrm{e}^{-(x+(1+t)\t_c)|D|} \d x.
\end{equation}
Multiplying \eqref{eq:Dixmier3} by $a$ from the left and then taking the trace, it yields 
\begin{equation}
\label{eq:Dixmier4}
t\Tr(aT^{1+t})= \frac{t}{\Gamma(2+2t)}\int_{0}^{\infty}x^{1+2t}G_a(x+t\t_c)\d x.
\end{equation}
Consider some $x_0>0$ and decompose $t\Tr(aT^{1+t})=I_1(t)+I_2(t)$, with 
\begin{align*}
I_1(t)&:=\frac{t}{\Gamma(2+2t)}\int_{0}^{x_0}x^{1+2t}G_a(x+t\t_c)\d x,\\
I_2(t)&:=\frac{t}{\Gamma(2+2t)}\int_{x_0}^{\infty}x^{1+2t}G_a(x+t\t_c)\d x. 
\end{align*}

First, we claim that $\lim_{t\to 0^+}I_2(t)=0$. Indeed, for $x\geq x_0$ we have 
\begin{align*}
|G_a(x+t\t_c)|&\leq \|a\| \|\mathrm{e}^{-(x+(t+1)\t_c)|D|}\|_1\\
&\leq \|a\|\|\mathrm{e}^{-(\t_c+\frac{x_0}{2})|D|}\|_1 \|\mathrm{e}^{-(x+t\t_c-\frac{x_0}{2})|D|}\| \\
&\leq \|a\|\|\mathrm{e}^{-(\t_c+\frac{x_0}{2})|D|}\|_1 \mathrm{e}^{-\frac{\lambda_0}{2} x}.
\end{align*} 
Hence, the integral term in $I_2$ is uniformly bounded for small $t>0$, and the claim follows. Now, we focus on $I_1(t)$ and assume that $x_0>0$ is small enough so that, for small $t>0$, the asymptotics \eqref{eq:Dixmier2} hold for $G_a(x+t\t_c)$ when $0\leq x\leq x_0.$ That is, $$G_a(x+t\t_c)=C_a(x+t\t_c)^{-3}+R_a(x+t\t_c),$$ with $R_a(x+t\t_c)=O((x+t\t_c)^{-2}).$ In particular, we can decompose $I_1(t)=I_{11}(t)+I_{12}(t)$, with 
\begin{align*}
I_{11}(t)&:=\frac{tC_a}{\Gamma(2+2t)}\int_{0}^{x_0}x^{1+2t}(x+t\t_c)^{-3}\d x,\\
I_{12}(t)&:=\frac{t}{\Gamma(2+2t)}\int_{0}^{x_0}x^{1+2t}R_a(x+t\t_c)\d x.
\end{align*}
It is straightforward to check that $$\lim_{t\to 0^+} I_{11}(t)= \frac{C_a}{2\t_c},\qquad \lim_{t\to 0^+} I_{12}(t)=0.$$ From Theorem \ref{thm:HeatTrwitha} we have $C_a=C\varphi_A(a)$, hence \eqref{eq:Dixmier1} follows, and the proof is complete.
\end{proof}

\subsection{Local heat trace asymptotics for the positive part of $D$}
\label{localIII}

We now study the Heisenberg flow of $O_A$ associated to $D$. Specifically, for $T\in O_A$ consider the $\mathbb R$-flow $\sigma_t(T):=\mathrm{e}^{itD}T\mathrm{e}^{-itD}$, and the saturation $O_{A,D}$ of $O_A$ under it, namely $$O_{A,D}:= C^*(\bigcup_{t\in \mathbb R} \sigma_t(O_A)).$$ Using Theorem \ref{thm:O_A_action} and the closed formula for the eigenfunctions in Proposition \ref{prop:eigenfunctions} we derive the following. Let also $P_D$ denote the non-negative spectral projection of $D$.

\begin{prop}
\label{prop:KMS}
We have that $\Tr(P_D\mathrm{e}^{-tD})<\infty$ if and only if $t>\t_c$. Specifically, 
$$\lim_{t\to \t_c^+} (t-\t_c)\Tr(P_D\mathrm{e}^{-tD})=\frac{\t_c}{\lambda_A^2 \log \lambda_A} \sum_{j=1}^{N} \mathfrak{c}(j)v_j.$$ 
Moreover, the state $\tilde{\varphi}_A$ on $O_{A,D}$ given by 
$$\tilde{\varphi}_A(T):= \lim_{t\to \t_c^+} \frac{\Tr(P_DT\mathrm{e}^{-tD})}{\Tr(P_D\mathrm{e}^{-tD})}$$ 
is a $\t_c$-KMS-state for $\sigma_t$, and its restriction on $O_A$ coincides with the KMS-state $\varphi_A$ of the gauge action.
\end{prop}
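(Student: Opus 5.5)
The plan is to compute everything in the eigenbasis of $D$. The basis $(\mathrm{e}_\gamma,\mathrm{e}_{(\gamma,\nu,j)})$ diagonalises $D$, with the following sign pattern:
\[
D\mathrm{e}_\gamma=|\gamma|_c\mathrm{e}_\gamma \quad (|s(\gamma)|=1),\qquad D\mathrm{e}_\gamma=-|\gamma|_c\mathrm{e}_\gamma \quad (|s(\gamma)|\ge 2),
\]
and $D\mathrm{e}_{(\gamma,\nu,j)}=-(|\gamma|_c+\lambda^A_{s(\gamma)}(\nu))\mathrm{e}_{(\gamma,\nu,j)}$. Hence $P_D$ is exactly the projection $P_A$ onto $\mathcal F_A=\bigoplus_{\gamma\in\tilde V_A}\mathbb C\mathrm{e}_\gamma$. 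Consequently
\[
\Tr(P_D\mathrm{e}^{-tD})=\sum_{\alpha.j\in\tilde V_A}\mathrm{e}^{-t(|\alpha|+1)\log\lambda_A/\t_c}.
\]
Writing the sum over admissible $\alpha j$ as $\sum_j\bigl(1+\sum_k A_{k,j}F_c(t,k)\bigr)$ times $\mathrm{e}^{-t\log\lambda_A/\t_c}$, Lemma \ref{lem:Potential_est} and \eqref{eq:HeatTr1} show that this converges iff $t>\t_c$. They also give the simple pole, with residue $\frac{\t_c}{\lambda_A\log\lambda_A}\lambda_A^{-1}\sum_k\sum_j A_{k,j}v_k=\frac{\t_c}{\lambda_A^2\log\lambda_A}\sum_k\mathfrak c(k)v_k$, using $\mathrm{e}^{-\log\lambda_A}=\lambda_A^{-1}$ at $t=\t_c$.

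For the state, note that $P_D\mathrm{e}^{-tD}=\mathrm{e}^{-tM_L}P_A$ is a positive trace-class operator for $t>\t_c$. So $\psi_t(T)=\Tr(P_DT\mathrm{e}^{-tD})/\Tr(P_D\mathrm{e}^{-tD})$ is a state on $B(L^2)$, and it is KMS for $\mathrm{Ad}\,\mathrm{e}^{isD}$ at inverse temperature $t$ on the corner $P_D$. Actually $P_D$ commutes with $D$, so $\psi_t$ is the Gibbs state of the Heisenberg flow compressed to $\mathcal F_A$. I would first compute the limit on $O_A$. By Theorem \ref{thm:O_A_action}, $S_wS_\nu^*$ maps $\mathrm{e}_{\alpha.j}$ to a multiple of another basis vector in $\mathcal F_A$: $S_\nu^*$ removes the prefix $\nu$ and $S_w$ adds the prefix $w$. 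Therefore $\langle \mathrm{e}_{\alpha.j},S_wS_\nu^*\mathrm{e}_{\alpha.j}\rangle$ vanishes unless $w=\nu$, and for $w=\nu$ it equals $\chi_{\{\alpha\in wV_A\}}$ (with the usual care when $\alpha=w$ and $|s|=1$). Thus
\[
\Tr(P_DS_wS_w^*\mathrm{e}^{-tD})=\sum_{\alpha\in wV_A,\ \alpha j\in V_A}\mathrm{e}^{-t|\alpha.j|_c}.
\]
This equals $\mathrm{e}^{-t|w|_c}\sum_{l,k,j}A_{w_{|w|},l}A_{k,j}F_c(t,l,k)$ plus bounded terms. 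Its residue at $\t_c$ is $\lambda_A^{-|w|}\cdot\frac{\t_c}{\lambda_A\log\lambda_A}\sum_l A_{w_{|w|},l}u_l\sum_{k,j}A_{k,j}v_k=\lambda_A^{-|w|}\lambda_Au_{w_{|w|}}\times(\text{residue of }\Tr(P_D\mathrm{e}^{-tD}))$, using $Au=\lambda_Au$. The ratio then tends to $\lambda_A^{-|w|+1}u_{w_{|w|}}=\varphi_A(S_wS_w^*)$. Density of the span of the $S_wS_\nu^*$ and uniform boundedness of the states give $\tilde\varphi_A|_{O_A}=\varphi_A$.

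Next I would extend to $O_{A,D}$. Since $\sigma_s(S_wS_\nu^*)$ acts on $\mathcal F_A$-basis vectors as $S_wS_\nu^*$ multiplied by the phase $\mathrm{e}^{is(|w|-|\nu|)\log\lambda_A/\t_c}$ (as $D$ shifts by word length there), the same eigenbasis computation handles finite products of flowed generators. These are again weighted partial shifts of the basis of $\mathcal F_A$, and their weights are phases independent of the index once $\alpha$ is long. So the limit exists on a dense $*$-subalgebra and extends by boundedness. The KMS condition at $\t_c$ follows by passing to the limit in the exact Gibbs identity
\[
\Tr(P_DXY\mathrm{e}^{-tD})=\Tr(P_DY\sigma_{it}(X)\mathrm{e}^{-tD})
\]
for analytic $X$ (products of eigen-operators for the flow), which holds because $P_D$ commutes with $D$. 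Replacing $t$ by $\t_c$ in $\sigma_{it}$ costs an error $\to0$, since for the eigen-operators $\sigma_{it}(X)=\mathrm{e}^{-t\kappa}X$ with $\kappa$ fixed. The main obstacle I anticipate is the bookkeeping on $\mathcal F_A$: boundary cases of Theorem \ref{thm:O_A_action} (case (3), and whether $P_DTP_D$ versus $P_DT$ matters). I would handle these by noting that the discrepancies affect only finitely many short $\alpha$, hence contribute $O(1)$ and vanish after division by the pole.
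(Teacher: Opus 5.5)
Your computation of $\Tr(P_D\mathrm{e}^{-tD})$, of its residue, and of $\tilde\varphi_A|_{O_A}=\varphi_A$ is essentially the paper's proof: $P_D=P_A$, only diagonal entries on $\mathcal{F}_A$ contribute, and the residues come from Lemma~\ref{lem:Potential_est} together with $Au=\lambda_Au$. There are two small corrections.

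\emph{A cancelling pair of slips.} Write $\zeta=\log\lambda_A/\t_c$. Then $|\alpha.j|_c=\zeta(|\alpha|+1)$, so the main term of $\Tr(P_DS_wS_w^*\mathrm{e}^{-tD})$ carries the prefactor $\mathrm{e}^{-t\zeta(|w|+1)}$, not $\mathrm{e}^{-t|w|_c}$. Your next equality is off by exactly a factor $\lambda_A$. The two slips cancel, so the limit $\lambda_A^{-|w|+1}u_{w_{|w|}}$ is right.

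\emph{The exceptional $\alpha$.} These are the proper prefixes of $\nu$ (of $w$ in the diagonal case), not $\alpha=w$. For them $S_\nu^*\mathrm{e}_{\alpha.j}$ leaves $\mathcal{F}_A$; for example, $S_j^*\mathrm{e}_{{\o}.j}$ is supported in $\bigsqcup_k G_{{\o}.jk}$. For the trace this is harmless, since there are finitely many such terms and they contribute $O(1)$.

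For the KMS property the paper does not argue directly. It observes $\tilde\varphi_A\circ\sigma_s=\tilde\varphi_A$ on $O_A$ and invokes \cite[Theorem 1]{GRU}. Your direct route is different, but as written it rests on two false statements.

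\emph{The Gibbs identity is not exact.} Read $\Tr(P_DT\mathrm{e}^{-tD})$ as $\Tr(P_DTP_D\mathrm{e}^{-tD})$, which is what you actually compute. Then
\[
\Tr(P_DXY\mathrm{e}^{-tD})-\Tr(P_DY\sigma_{it}(X)\mathrm{e}^{-tD})=\Tr(P_DXP_D^{\perp}YP_D\mathrm{e}^{-tD})-\Tr(P_DYP_D^{\perp}\mathrm{e}^{-tD}P_D^{\perp}XP_D).
\]
This is nonzero in general, precisely because of the leakage out of $\mathcal{F}_A$ just described. That $P_D$ commutes with $D$ does not help; the obstruction is $[P_D,X]\neq 0$.

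\emph{$S_wS_\nu^*$ is not an eigenoperator of $\mathrm{Ad}\,\mathrm{e}^{isD}$.} By Theorem~\ref{thm:O_A_action}, $S_i$ raises the $D$-eigenvalue by $\zeta$ on $\mathcal{F}_A$. In cases (1)--(2) it lowers it by $\zeta$ on the $\mathrm{e}_{(\gamma,\nu,j)}$ and on $\mathrm{e}_\gamma$ with $|s(\gamma)|\geq 2$. In case (3) it shifts it by amounts that vary with the basis vector. Hence $\sigma_{it}(X)\neq \mathrm{e}^{-t\kappa}X$, and your justification for replacing $t$ by $\t_c$ inside $\sigma_{it}(X)$ does not apply.

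Both points can be repaired in the spirit of your last paragraph.

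\emph{Repairing the Gibbs identity.} For $X,Y$ in the $*$-algebra generated by the $\sigma_s(S_wS_\nu^*)$, the operators $P_D^{\perp}XP_D$ and $P_D^{\perp}YP_D$ are finite rank, with range in a fixed finite span of eigenvectors. So both correction terms above are $O(1)$ as $t\to\t_c^+$. They vanish after division by $\Tr(P_D\mathrm{e}^{-tD})\sim(t-\t_c)^{-1}$.

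\emph{Repairing the analyticity step.} Such $X$ are entire analytic for $\sigma$: their matrices in the eigenbasis are uniformly sparse with uniformly bounded eigenvalue jumps, which gives $\|\mathrm{ad}_D^n(X)\|\leq CK^n$. Hence $\sigma_{it}(X)\to\sigma_{i\t_c}(X)$ in norm. On the long-$\alpha$ part of $\mathcal{F}_A$, which is the only part the limit sees, $\sigma_{it}(S_wS_\nu^*)$ does act as $\mathrm{e}^{-t\zeta(|w|-|\nu|)}S_wS_\nu^*$.

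With these fixes your route is self-contained and yields more than the paper makes explicit: existence of the limit on all of $O_{A,D}$, with
\[
\tilde\varphi_A\bigl(\sigma_{s_1}(S_{w_1}S_{\nu_1}^*)\cdots\sigma_{s_m}(S_{w_m}S_{\nu_m}^*)\bigr)=\prod_{k=1}^m\mathrm{e}^{is_k\zeta(|w_k|-|\nu_k|)}\,\varphi_A\bigl(S_{w_1}S_{\nu_1}^*\cdots S_{w_m}S_{\nu_m}^*\bigr).
\]
The paper's route is shorter, but it delegates exactly these points to \cite{GRU}.
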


We note that results similar to Proposition \ref{prop:KMS} were studied in more detail in \cite{GRU}.

\begin{proof}
For brevity, set $\zeta:= \frac{ \log \lambda_A}{\t_c}$. Given $\alpha,\beta  \in V_A$ such that $A_{\alpha_{|\alpha|},\beta_{|\beta|}}=1,$ if $\alpha,\beta\neq {\o}$, we aim to estimate $\Tr(P_DS_{\alpha \beta_{|\beta|}} S_{\beta}^* \mathrm{e}^{-tD}).$ Notice that it suffices to work with such elements $T_{\alpha,\beta}:=S_{\alpha \beta_{|\beta|}} S_{\beta}^*$ as they span the dense $*$-subalgebra $C_c^{\infty}(G_A)$ of $O_A$. 

To this end, for any $\alpha'.\beta' \in \tilde{V}_A$, using the Cuntz--Krieger relations and Remark \ref{rem: char_functions}, we obtain that 

$$T_{\alpha,\beta}\mathrm{e}_{\alpha'.\beta'}=
\begin{cases}
\mathrm{e}_{\alpha \beta_{|\beta|}\delta.\beta'}, & \text{if  } \alpha'=\beta \delta\,\, \text{for a unique } \delta\in V_A\\
\mathrm{e}_{\alpha. \delta}, & \text{if  } \beta=\alpha'\delta\,\, \text{for a unique } \delta\in V_A\setminus \{{\o}\}\,\, \text{with } \delta_1=\beta'. 
\end{cases}$$ 
Moreover, for $t>\t_c$ we have
\begin{align*}
\Tr(P_DT_{\alpha,\beta}\mathrm{e}^{-tD})&= \sum_{\gamma\in I_A} \left(\langle P_D T_{\alpha,\beta} \mathrm{e}^{-tD} \mathrm{e}_{\gamma},\mathrm{e}_{\gamma}\rangle +\sum_{\nu,j} \langle P_D T_{\alpha,\beta} \mathrm{e}^{-tD} \mathrm{e}_{\gamma,\nu,j},\mathrm{e}_{\gamma,\nu,j}\rangle \right)\\
&= \sum_{\alpha'.\beta'\in \tilde{V}_A} \langle P_D T_{\alpha,\beta} \mathrm{e}^{-tD} \mathrm{e}_{\alpha'.\beta'},\mathrm{e}_{\alpha'.\beta'}\rangle\\
&=\langle T_{\alpha,\beta}\mathrm{e}^{-tD}e_{\hat{\beta}.\beta_{|\beta|}} ,e_{\hat{\beta}.\beta_{|\beta|}}\rangle + \sum_{\beta'=1}^{N} A_{\beta_{|\beta|},\beta'} \langle T_{\alpha,\beta}\mathrm{e}^{-tD}\mathrm{e}_{\beta.\beta'}, \mathrm{e}_{\beta.\beta'} \rangle\\
&+ \sum_{\beta'=1}^{N} \sum_{\delta \in V_A\setminus \{{\o}\}} B_{\beta_{|\beta|},\delta_1} A_{\delta_{|\delta|},\beta'} \langle T_{\alpha,\beta}\mathrm{e}^{-tD}\mathrm{e}_{\beta \delta.\beta'}, \mathrm{e}_{\beta \delta.\beta'} \rangle,
\end{align*}
where $B_{\beta_{|\beta|},\delta_1}=A_{\beta_{|\beta|},\delta_1}$ if $\beta\neq {\o}$, and $B_{\beta_{|\beta|},\delta_1}=1$ if $\beta={\o}$. As a result, if $\alpha\neq \hat{\beta}$ then 
\begin{equation}\label{eq:KMS0}
\Tr(P_DT_{\alpha,\beta}\mathrm{e}^{-tD})=0,
\end{equation}
and if $\beta\neq {\o}$ then
\begin{align}\label{eq:KMS1}
\Tr(P_DT_{\hat{\beta},\beta}\mathrm{e}^{-tD})&=\mathrm{e}^{-t|\beta|_c}+\sum_{\beta'=1}^{N} A_{\beta_{|\beta|},\beta'}\mathrm{e}^{-t(|\beta|_c+\zeta)}+\sum_{\beta'=1}^{N} \sum_{\delta \in V_A\setminus \{{\o}\}} A_{\beta_{|\beta|},\delta_1} A_{\delta_{|\delta|},\beta'} \mathrm{e}^{-t(|\beta|_c+|\delta|_c+\zeta)}\\ \nonumber
&=\mathrm{e}^{-t|\beta|_c}+\sum_{\beta'=1}^{N} A_{\beta_{|\beta|},\beta'}\mathrm{e}^{-t(|\beta|_c+\zeta)}+\mathrm{e}^{-t(|\beta|_c+\zeta)}\sum_{\beta'=1}^{N}\sum_{i,j=1}^{N}A_{\beta_{|\beta|},i}A_{j,\beta'}F_c(t,i,j)\\ \nonumber
&=\mathrm{e}^{-t|\beta|_c}+\sum_{\beta'=1}^{N} A_{\beta_{|\beta|},\beta'}\mathrm{e}^{-t(|\beta|_c+\zeta)}+\mathrm{e}^{-t(|\beta|_c+\zeta)}\sum_{i,j=1}^{N}A_{\beta_{|\beta|},i}\mathfrak{c}(j)F_c(t,i,j).
\end{align}
From Lemma \ref{lem:Potential_est} we then obtain that
\begin{align}\label{eq:KMS2}
\lim_{t\to \t_c^+} (t-\t_c)\Tr(P_DT_{\hat{\beta},\beta}\mathrm{e}^{-tD})&=\zeta^{-1}\lambda_A^{-|\beta|-2}\left(\sum_{j=1}^{N}\mathfrak{c}(j)v_j\right) \left(\sum_{i=1}^{N} A_{\beta_{|\beta|},i}u_i\right)\\ \nonumber
&= \zeta^{-1}\lambda_A^{-|\beta|-2}\left(\sum_{j=1}^{N}\mathfrak{c}(j)v_j\right) \lambda_Au_{\beta_{|\beta|}}\\ \nonumber
&= \zeta^{-1}\lambda_A^{-|\beta|-1}u_{\beta_{|\beta|}}\left(\sum_{j=1}^{N}\mathfrak{c}(j)v_j\right).
\end{align}

If $\beta={\o}$, from \eqref{eq:KMS1} we get 
\begin{equation*}
\Tr(P_D\mathrm{e}^{-tD})=N\mathrm{e}^{-t\zeta}+\sum_{\beta'=1}^{N} \sum_{\delta \in V_A\setminus \{{\o}\}} A_{\delta_{|\delta|},\beta'} \mathrm{e}^{-t(|\delta|_c+\zeta)}.
\end{equation*}
It is also clear that $\Tr(P_D\mathrm{e}^{-tD})<\infty$ if and only if $t>\t_c$ and 
\begin{equation}\label{eq:KMS3}
\lim_{t\to \t_c^+} (t-\t_c)\Tr(P_D\mathrm{e}^{-tD})=\zeta^{-1} \lambda_A^{-2}\left(\sum_{j=1}^{N}\mathfrak{c}(j)v_j\right).
\end{equation}
Then, from \eqref{eq:KMS0}, \eqref{eq:KMS2} and \eqref{eq:KMS3} we get that $\tilde{\varphi}_A$ on $O_A$ is the KMS-state $\varphi_A$. 

Finally, for every $t\in \mathbb R$ and $T\in O_A$ it holds 
$\varphi_A(\sigma_t(T))=\varphi_A(T),$ and from \cite[Theorem 1]{GRU} we get that $\varphi_A$ is a $\t_c$-KMS-state for $\sigma_t$.
\end{proof}

\section{The local Weyl law and relations to quantum ergodicity}

We can compute the Weyl law in the case of regular graphs, and in fact even a local Weyl law. This will lead us to an averaged version of quantum ergodicity, as discussed in Remark \ref{qedisc} above. Here, we follow the convention of, for instance, \cite[Section 2]{zelditchqe} to call the asymptotic behaviour of $\Tr(a\chi_{[0,\theta]}(|D|))$ a local Weyl law. 

\subsection{Weyl Law for regular graphs}
We use $\Tr(\mathrm{e}^{-t|D|})$ to obtain a Weyl law for $|D|$. Our methods are restricted to regular strongly connected graphs since we need an explicit heat trace. The key tool is Perron's inversion formula as the Tauberian theorem does not apply, since there are infinitely many poles along imaginary lines in the heat trace.

\begin{thm}
\label{thm:Weyl_law}
Assume that the row-sums and column-sums of $A$ are equal to $d\geq 2$. Then, $\t_c= \log(d)c^{-1}$ with $c:=N^{-1}(d-1)$, and for $t>\t_c$ we have $$\Tr(\mathrm{e}^{-t|D|})= N\frac{(\mathrm{e}^{ct}+(d-1)\mathrm{e}^{-tN^{-1}}-d)(\mathrm{e}^{ct}-d\mathrm{e}^{-ct})}{(\mathrm{e}^{ct}-d)^3}.$$ The function $t\mapsto \Tr(\mathrm{e}^{-t|D|})$ extends to a meromorphic function on $\mathbb C$ with poles forming $$\mathfrak{T}_A=\{\t_c(k):=\t_c+2\pi c^{-1} k i: k\in \mathbb Z\}.$$ 
Also, let $(\theta_j)_{j\geq 0}$ denote the eigenvalues of $|D|$ in non-decreasing order, counting multiplicity. Then, the counting function $\mathcal{N}(\theta):=\#\{\theta_j\leq \theta:j\geq 0\}$ satisfies 
$$\mathcal{N}(\theta)=c_{d,N}\theta^2 d^{\lfloor \frac{\theta N-1}{d-1}\rfloor}+O(\theta \mathrm{e}^{\theta\t_c}),$$ 
where $\lfloor x\rfloor\in \Z$ denotes the floor function of $x\in \R$, and 
$$c_{d,N}=\frac{N^3}{2(d-1)d^{2}}.$$
\end{thm}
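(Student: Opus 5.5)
The plan is to compute everything explicitly, because in the doubly regular case all the spectral data are explicit. Since every row-sum of $A$ is $d$, we have $\lambda_A=d$, $u_j=N^{-1}$ and $P_{ij}=d^{-1}A_{ij}$. Since every column-sum is also $d$, we may take $v_j=1$. Example \ref{ex:out-reg} then gives $\t_c=\log(d)c^{-1}$ and $|\gamma|_c=c|\gamma|$.

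\textbf{Step 1: the heat trace on a cylinder.} By \eqref{eq:eigenvalues} and Proposition \ref{prop:eigenfunctions}, on $C(\beta)$ the eigenvalue $dN^{-1}+cm$ occurs with multiplicity $(d-1)d^m$ for $m\geq 0$, and $0$ is simple. Hence
$$\Tr(\mathrm{e}^{-t\Delta_{C(j)}})=1+\frac{(d-1)\mathrm{e}^{-tdN^{-1}}}{1-d\mathrm{e}^{-ct}},$$
and this is independent of $j$.

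\textbf{Step 2: the sum over bisections.} In the proof of Theorem \ref{thm:HeatTr} we factored
$$\Tr(\mathrm{e}^{-t|D|})=\sum_j\Tr(\mathrm{e}^{-t\Delta_{C(j)}})\sum_{\alpha.\beta\in I_A,\ \beta_{|\beta|}=j}\mathrm{e}^{-tc(|\alpha|+|\beta|)}.$$
I will evaluate the inner sum using the formulas from that proof. Column-regularity gives $\sum_i A^{n}_{ij}=d^n$, so $F_c(t,k)=\sum_i F(ct,i,k)=\mathrm{e}^{-ct}/(1-d\mathrm{e}^{-ct})$ for every $k$. Substituting, and using that each $j$ has exactly $d$ in-neighbours, the inner sum becomes a rational function of $x=\mathrm{e}^{-ct}$. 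It should simplify to $(\mathrm{e}^{ct}-d\mathrm{e}^{-ct})/(\mathrm{e}^{ct}-d)^{2}$ up to the normalisation. Multiplying by $N$ times Step 1 and simplifying should give the stated closed formula. Its only poles come from $\mathrm{e}^{ct}=d$, which are triple poles at $\t_c+2\pi c^{-1}ki$. This describes $\mathfrak{T}_A$ in the sense of the singular set of the trace, consistent with Theorem \ref{thm:HeatTr}. I will also verify the leading coefficient against $C$ from Theorem \ref{thm:HeatTr} as a consistency check.

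\textbf{Step 3: the Weyl law.} Rather than a Tauberian argument, which fails because of the infinitely many poles on the line $\mathrm{Re}\,t=\t_c$, I will read the counting function off the spectrum directly. Set $x=\mathrm{e}^{-ct}$ and $y=\mathrm{e}^{-t/N}$, and note $dN^{-1}=c+N^{-1}$. Then every eigenvalue of $|D|$ is of the form $cn$ or $cn+N^{-1}$ with $n\in\N$. Expanding the closed formula as a power series in $x$, with $y$ appearing linearly, gives multiplicities $a_n$ and $b_n$. Because of the triple pole at $x=d^{-1}$, these are $d^n$ times quadratic polynomials in $n$, plus lower-order contributions.

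Then $\mathcal N(\theta)=\sum_{cn\leq\theta}a_n+\sum_{cn+N^{-1}\leq\theta}b_n$. The partial sums of $Q(n)d^n$ are dominated by their last term, with a geometric factor $d/(d-1)$. The cutoff index is $\lfloor (\theta-N^{-1})/c\rfloor=\lfloor(\theta N-1)/(d-1)\rfloor$, which is exactly the floor in the statement.

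The leading coefficient $c_{d,N}$ should come out as $\tfrac12 c^{-2}$ times the coefficient of $n^2$, combined with the geometric factor. All corrections are of relative size $O(\theta^{-1})$. In addition, the shift between $\lfloor\theta/c\rfloor$ and $\lfloor(\theta-N^{-1})/c\rfloor$ costs at most one top term, which is $O(\theta^2 d^{\theta/c})$ only when it is not absorbed. I expect these last-term contributions to regroup correctly, since the floor in the statement is chosen to absorb exactly them, leaving $O(\theta\,\mathrm{e}^{\theta\t_c})$, because $d^{\theta/c}=\mathrm{e}^{\theta\t_c}$.

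\textbf{Main obstacle.} The main obstacle is this bookkeeping: identifying the exact quadratic coefficients $a_n$ and $b_n$, and showing that the two interleaved lattices $c\N$ and $c\N+N^{-1}$ combine into the single floor $\lfloor(\theta N-1)/(d-1)\rfloor$ with error $O(\theta\,\mathrm{e}^{\theta\t_c})$ rather than $O(\theta^2\mathrm{e}^{\theta\t_c})$. If the direct expansion becomes unwieldy, I would fall back on Perron's inversion formula,
$$\mathcal N(\theta)=\frac{1}{2\pi i}\int_{\sigma-i\infty}^{\sigma+i\infty}\Tr(\mathrm{e}^{-t|D|})\frac{\mathrm{e}^{t\theta}}{t}\,\mathrm{d}t.$$
Summing the residues at the triple poles $\t_c(k)$ produces $\theta^2\mathrm{e}^{\theta\t_c}$ times a Fourier series in $\theta$. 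This series should sum to the sawtooth $d^{\lfloor\cdot\rfloor-(\cdot)}$, which recovers the floor function.
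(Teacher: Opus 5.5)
Your Steps 1 and 2 are the paper's computation, and the places where you wrote ``should'' work out exactly. With $f=(\mathrm{e}^{ct}-d)^{-1}$ the bisection sum is $S_j(t)=\mathrm{e}^{-ct}\bigl(1+2df+d(d-1)f^2\bigr)=(\mathrm{e}^{ct}-d\mathrm{e}^{-ct})(\mathrm{e}^{ct}-d)^{-2}$, with no extra normalisation. At $\mathrm{e}^{ct}=d$ the numerator of the closed formula equals $(d-1)^2\mathrm{e}^{-t/N}\neq 0$, so every $\t_c(k)$ is a genuine triple pole and there are no other poles. Step 3 is where you depart from the paper, which never computes multiplicities. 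The paper applies Perron's formula, shifts the contour across $\mathrm{Re}(t)=\t_c$, and sums the residues at all $\t_c(k)$ into a Fourier series that it identifies with the sawtooth $d^{1-\{x\}}/(d-1)$. That is your fallback.

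Your direct summation does work, but not for the reason you give. The claim that $a_n$ and $b_n$ are both $d^n$ times quadratics is false for $a_n$. If it were true, the statement would fail: the cutoffs $\lfloor\theta/c\rfloor$ and $\lfloor(\theta N-1)/(d-1)\rfloor$ would produce two sawtooth terms of size $\theta^2\mathrm{e}^{\theta\t_c}$ with different phases, and no single floor could absorb them. What actually happens becomes visible once you split the closed formula in $x=\mathrm{e}^{-ct}$, $y=\mathrm{e}^{-t/N}$:
\[
\Tr(\mathrm{e}^{-t|D|})=N\frac{x(1-dx^2)}{(1-dx)^2}+N(d-1)\frac{x^2(1-dx^2)}{(1-dx)^3}\,y .
\]
The kernel part has only a double pole at $x=d^{-1}$. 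Hence $a_n=Nd^{n-2}\bigl((d-1)n+2\bigr)$ for $n\geq 2$, and $\sum_{cn\leq\theta}a_n=O(\theta\mathrm{e}^{\theta\t_c})$ whatever its cutoff. All the $\theta^2$ growth sits in
\[
b_n=\tfrac{N(d-1)}{2}d^{n-3}\bigl(dn(n-1)-(n-2)(n-3)\bigr),\quad n\geq 2,
\]
which is summed exactly up to $M=\lfloor(\theta N-1)/(d-1)\rfloor$. This gives $\tfrac{N(d-1)}{2d^2}M^2d^M+O(Md^M)=c_{d,N}\theta^2d^M+O(\theta\mathrm{e}^{\theta\t_c})$, using $M=\theta/c+O(1)$ and $d^{\theta/c}=\mathrm{e}^{\theta\t_c}$. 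So the single floor is simply the cutoff of the non-kernel lattice $c\N+N^{-1}$, and nothing needs to regroup.

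With this correction your argument is complete, and it is more elementary than the paper's. It needs no control of the horizontal and left vertical contour integrals; the paper's estimate for the latter, $O(\mathrm{e}^{\theta\t_c}\log T_n)$, is not uniform in $n$. It needs no pointwise convergence of Fourier series, and it explains where the floor comes from. The paper's route, in exchange, uses only the pole locations and Laurent coefficients of the heat trace. That is why it transfers to the local Weyl law in Lemma \ref{thm:localWeyl_law}, where only the leading part of $\Tr(S_wS_\nu^*\mathrm{e}^{-t|D|})$ is explicit.
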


\begin{proof}
Let $t>\t_c$ and $j\in \{1,\ldots, N\}$, and define $S_j(t):=\sum_{\alpha.\beta\in I_A, \beta_{|\beta|}=j}\mathrm{e}^{-t(|\alpha|_c+|\beta|_c)}.$ From the proof of Theorem \ref{thm:HeatTr} we have that 
\begin{equation}\label{eq:Weyl_0}
\Tr(\mathrm{e}^{-t|D|})=\sum_{j=1}^{N} \Tr(\mathrm{e}^{-t\Delta_{C(j)}}) S_j(t).
\end{equation}

The out-regularity of the graph gives $\t_c= \log(d)c^{-1}$,  $c:=N^{-1}(d-1)$ (Example \ref{ex:out-reg}), and allows to compute $\Tr(\mathrm{e}^{-t\Delta_{C(j)}})$ explicitly. Specifically, for $j\in \{1,\ldots, N\}$ and from \eqref{eq:eigenvalues} we see that for every $\nu \in jV_{A}$, the eigenvalue $\lambda_j^A(\nu)$ of $\Delta_{C(j)}$ is given by $$\lambda_j^A(\nu)=dN^{-1}+c(|\nu|-1).$$ Also, it has multiplicity $(d-1)d^{n-1}$. As a result, we obtain that 
\begin{equation*}
\Tr(\mathrm{e}^{-t\Delta_{C(j)}})= \frac{(\mathrm{e}^{ct}+(d-1)\mathrm{e}^{-tN^{-1}}-d)}{\mathrm{e}^{ct}-d}.
\end{equation*}
The in-regularity of the graph allows to compute $S_j(t)$ explicitly. First, one calculates that for each $k\in \{1,\ldots,N\}$ the function $F_c(t,k)$ from \eqref{eq:HeatTr0} is given by $$F_c(t,k)= \frac{1}{\mathrm{e}^{ct}-d}.$$ Then, following the proof of Theorem \ref{thm:HeatTr} we get 
\begin{equation*}
S_j(t)= \frac{\mathrm{e}^{ct}-d\mathrm{e}^{-ct}}{(\mathrm{e}^{ct}-d)^2}.
\end{equation*}
From \eqref{eq:Weyl_0} it follows that $$\Tr(\mathrm{e}^{-t|D|})= N\frac{(\mathrm{e}^{ct}+(d-1)\mathrm{e}^{-tN^{-1}}-d)(\mathrm{e}^{ct}-d\mathrm{e}^{-ct})}{(\mathrm{e}^{ct}-d)^3}.$$

It is immediate to see that the poles are $\mathfrak{T}_A=\{\t_c(k):=\t_c+2\pi c^{-1} k i: k\in \mathbb Z\}.$ Also, denote by $c_{-3}(k),c_{-2}(k)$ and $c_{-1}(k)$ the Laurent coefficients of $\Tr(\mathrm{e}^{-t|D|})$ at $t=\t_c(k)$. Setting $s_k:=d^{-(d-1)^{-1}}\mathrm{e}^{-2\pi (d-1)^{-1}ki}$, elementary calculations yield that 
\begin{align}\label{eq:Weyl_1}
c_{-3}(k)&= \frac{N^4}{d^3(d-1)}s_k,\\
\nonumber c_{-2}(k)&= \frac{N^3}{2d^3(d-1)}(2d+(3-d)s_k),\\
\nonumber c_{-1}(k)&= \frac{N^2}{2d^3(d-1)}(4d+(2-5d)s_k).
\end{align}
Now from Perron's theorem \cite[Theorem 13]{HR} we get that the counting function $\mathcal{N}(\theta)$ for $\theta >0$ is given by a principal value integral formula. Namely, for an arbitrary $\eta_+>\t_c$,
\begin{equation}
\label{eq:Weyl_2}
\mathcal{N}(\theta)=\lim_{T\to \infty}\frac{1}{2\pi i}\int_{\eta_+-Ti}^{\eta_+ + Ti} \Tr(\mathrm{e}^{-t|D|}) \mathrm{e}^{\theta t} t^{-1} \d t <\infty.
\end{equation}
Set $Z_{\theta}(t):= \Tr(\mathrm{e}^{-t|D|}) \mathrm{e}^{\theta t} t^{-1}$ for brevity, and consider the sequence $(T_n)_{n\geq 0}$ given by $T_n:=(n+2^{-1})2\pi c^{-1}$. Also, consider some $\eta_-<\t_c$ and the rectangle $R_n$ centred around $\text{Re}(t)=\t_c$ that is oriented counter-clockwise, with sides 
\begin{align*}
R_{n,1}&=\{\eta_+ + y i : y\in \mathbb R, |y|\leq T_n\},\\
R_{n,2}&=\{\eta + T_n i : \eta_- \leq \eta \leq \eta_+\},\\
R_{n,3}&=\{\eta_- + y i : y\in \mathbb R, |y|\leq T_n\},\\
R_{n,4}&=\{\eta - T_n i : \eta_- \leq \eta \leq \eta_+\}.
\end{align*}
Let $I_{n,j}(\theta):=\frac{1}{2\pi i}\int_{R_{n,j}} Z_{\theta}(t)\d t$ for $j=1,2,3,4$, and then the Residue theorem asserts that 
\begin{equation}\label{eq:Weyl_3}
I_{n,1}(\theta) =\sum_{|\Im(\t_c(k))|<T_n} \text{Res}_{t=\t_c(k)}Z_{\theta}(t) - (I_{n,2}(\theta)+I_{n,4}(\theta)) - I_{n,3}(\theta).
\end{equation}

The first observation is that 
\begin{equation}\label{eq:Weyl_4}
\lim_{n\to \infty} I_{n,1}(\theta)=\mathcal{N}(\theta).
\end{equation}
Moreover,
\begin{equation}\label{eq:Weyl_5}
\lim_{n\to \infty} (I_{n,2}(\theta)+I_{n,4}(\theta))=0.
\end{equation}
Indeed, for $t\in R_{n,2}\cup R_{n,4}$ we have $|\Tr(\mathrm{e}^{-t|D|})|\leq M_{\eta_-,\eta_+}$, where the constant $M_{\eta_-,\eta_+}>0$ is independent of $n\geq 0$. This is because $\mathrm{e}^{cT_ni}=-1$, which gives $|\mathrm{e}^{ct}-d|\geq d$. Therefore, there is an $M_{\eta_-,\eta_+,\theta}>0$ such that $$|I_{n,2}(\theta)+I_{n,4}(\theta)|\leq \frac{M_{\eta_-,\eta_+,\theta}}{T_n}.$$ Similarly, for $t\in R_{n,3}$ it holds that $|\Tr(\mathrm{e}^{-t|D|})|\leq M_{\eta_-}$, where the constant $M_{\eta_-}>0$ is independent of $n\geq 0$. The reason is that $|\mathrm{e}^{ct}-d|\geq d(1-\mathrm{e}^{c(\t_c-\eta_-)})>0$. However, here we have 
\begin{equation}\label{eq:Weyl_6}
|I_{n,3}(\theta)|\leq M_{\eta_-}\mathrm{e}^{\theta \eta_-} \int_{-T_n}^{T_n}\frac{1}{\sqrt{\eta_-+y^2}}\d y = O(\mathrm{e}^{\theta \t_c} \log (T_n)).
\end{equation} 
Moving now to the residues, through elementary computations we derive that for $k\in \mathbb Z$, 

\small
\begin{equation}\label{eq:Weyl_7}
\mathrm{Res}_{t=\t_c(k)} Z_{\theta}(t)
= \mathrm{e}^{\theta \t_c(k)} \Biggl(
\frac{c_{-1}(k)}{\t_c(k)}
+ c_{-2}(k)\Bigl(\frac{\theta}{\t_c(k)}-\frac{1}{\t_c(k)^2}\Bigr)
+ \frac{c_{-3}(k)}{2}\Bigl(\frac{\theta^2}{\t_c(k)}-\frac{2\theta}{\t_c(k)^2}+\frac{2}{\t_c(k)^3}\Bigr)
\Biggr).
\end{equation}
\normalsize
Consequently, since $|\t_c(k)|\geq 2\pi c^{-1}k$ we get
\begin{equation}\label{eq:Weyl_8}
\mathrm{Res}_{t=\t_c(k)} Z_{\theta}(t)
= \frac{c_{-3}(k)\theta^2+2c_{-2}(k)\theta+2c_{-1}(k)}{2\,\t_c(k)}\mathrm{e}^{2\pi \theta c^{-1}ki}\,\mathrm{e}^{\theta \t_c}
+ E(k,\theta),
\end{equation}
where the rest terms 
\begin{equation}\label{eq:Weyl_9}
E(k,\theta)=O(\theta \mathrm{e}^{\theta \t_c}k^{-2})
\end{equation}
Now consider the function 
$$F(x)=\lim_{n\to \infty}\sum_{k=-n}^n \frac{\mathrm{e}^{2\pi i k x}}{\log(d)+2\pi i k}.$$
By computations found in standard text books on Fourier series, see for instance \cite[Page 28, Table 1, Item 19]{follandfourier}, $F$ is a pointwise convergent Fourier series on $x\in \R\setminus \Z$ converging to the expression 
$$F(x)=\frac{d^{-\{x\}+1}}{d-1},$$ 
for $\{x\}\in (0,1)$ denoting the fractional part of $x\in \R\setminus \Z$, extended to a periodic function on $\R$ by $F(0)=\frac{1}{2}(F(0+)+F(0-))$.

It follows from the fact that $F$ is a pointwise convergent Fourier series that 
$$\tilde{G}(\theta):=\lim_{n\to \infty}\sum_{k=-n}^n  \frac{c_{-3}(k)\theta^2+2c_{-2}(k)\theta+2c_{-1}(k)}{2\,\t_c(k)}\mathrm{e}^{2\pi \theta c^{-1}k i},$$
is a pointwise convergent Fourier series converging to an expression of the form 
\begin{equation}
\label{tildeg}
\tilde{G}(\theta)=\frac{N^3}{2 d^{\frac{3d-2}{d-1}}}\theta^2 F\left(\frac{\theta N-1}{d-1}\right)+O(\theta).
\end{equation}
Therefore, substituting \eqref{eq:Weyl_8} to \eqref{eq:Weyl_3} and taking $n\to \infty$, we obtain 
$$\mathcal{N}(\theta)=\tilde{G}(\theta)\mathrm{e}^{\theta\t_c} + \lim_{n\to \infty} \left( \sum_{|\Im(\t_c(k))|<T_n} E(k,\theta) - I_{n,3}(\theta)\right),$$ 
meaning that the limit exists as both $\mathcal{N}(\theta)$ and $\tilde{G}(\theta)\theta^2 \mathrm{e}^{\theta\t_c}$ are finite. Finally, using the asymptotics in \eqref{eq:Weyl_6}, \eqref{eq:Weyl_9} and \eqref{tildeg} we complete  the proof by concluding 
\begin{align*}
\mathcal{N}(\theta)=&\tilde{G}(\theta)\mathrm{e}^{\theta\t_c} +O(\theta\mathrm{e}^{\theta \t_c})=\\
=&\frac{N^3}{2 d^{\frac{3d-2}{d-1}}}\theta^2 F\left(\frac{\theta N-1}{d-1}\right)\mathrm{e}^{\theta \t_c}+O(\theta\mathrm{e}^{\theta \t_c})=\\
=&\frac{N^3}{2 d^{\frac{3d-2}{d-1}}}\frac{d^{-\left\{\frac{\theta N-1}{d-1}\right\}+1}}{d-1}d^{\frac{\theta N-1}{d-1}+\frac{1}{d-1}}+O(\theta\mathrm{e}^{\theta \t_c})\\
=&\frac{N^3}{2(d-1)d^{\frac{3d-2}{d-1}-\frac{d}{d-1}}}\theta^2 d^{\lfloor \frac{\theta N-1}{d-1}\rfloor}+O(\theta \mathrm{e}^{\theta\t_c}).
\end{align*}
\end{proof}

\begin{remark}
The roots of $\Tr(\mathrm{e}^{-t|D|})$ are $$R=\{2^{-1}\t_c+\pi c^{-1}ki:k\in \mathbb Z\}\bigcup \{t\in \mathbb C: y=\mathrm{e}^{tN^{-1}}, y^d-dy+(d-1)=0\}.$$
\end{remark}

In order to understand the asymptotics of Theorem \ref{thm:Weyl_law}, we view the space of admissible words $V_A$ as a rooted tree with root ${\o}$, and equip it with the metric $\rho:V_A\times V_A\to \mathbb N\cup \{0\}$ defined as $\rho(v,w):=|v|+|w|-2|v\wedge w|,$ where $v\wedge w$ is the first common ancestor of $v,w$ in $V_A$. This turns $(V_A,\rho)$ into a Gromov hyperbolic space with product $$(v,w)_z:=\frac{1}{2}(\rho(v,z)+\rho(w,z)-\rho(v,w)),$$ where the geodesic triangles are $0$-thin and hence are tripods. Here we focus on the ones starting from $\{{\o}\}$, forming the set $$\mathrm{Trip}_{A,{\o}}=\{(v,w,z)\in (V_A\setminus \{{\o}\})^{\times 3}: zv,zw\in V_A,\,\, v_1\neq w_1\}.$$ The perimeter of such $(v,w,z)$ is then $\mathrm{Per}(v,w,z)=2(|v|+|w|+|z|).$

\begin{cor}
Assume that the row-sums and column-sums of $A$ are equal to $d\geq 2$. The counting function $\Pi(\theta, A^t):=\# \{(v,w,z)\in \mathrm{Trip}_{A^t,{\o}}: \mathrm{Per}(v,w,z)<\theta\}$ associated with $A^t$ satisfies $$\Pi(\theta, A^t)= C_{d,N}\theta^2d^{\lfloor \frac{\theta}{2} \rfloor}+ O(\theta d^{\frac{\theta}{2}}),\qquad C_{d,N}=\frac{N}{8d}.$$ Consequently, the same asymptotics hold also for $\Pi(\theta, A)$.
\end{cor}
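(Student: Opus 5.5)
The plan is to count the tripods directly, using only out- and in-regularity, and then extract the leading term of an elementary sum. Theorem \ref{thm:Weyl_law} is not needed for this. It serves only as a consistency check, since both asymptotics have the shape $\theta^2 d^{\lfloor\cdot\rfloor}$ with a perimeter/length rescaling.

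First, observe that $A^t$ is again a primitive $\{0,1\}$-matrix with all row- and column-sums equal to $d$. Hence everything below applies verbatim to $A$ and to $A^t$, which gives the final sentence of the corollary for free.

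Fix lengths $a=|z|$, $b=|v|$ and $c=|w|$, all $\geq 1$.
\begin{itemize}
\item Every letter has exactly $d$ successors, so the number of admissible $z$ of length $a$ is $N d^{a-1}$.
\item Given $z$, the first letters $v_1\neq w_1$ must be an ordered pair of distinct successors of $z_{a}$. There are $d(d-1)$ such pairs.
\item The remaining letters of $v$ and $w$ can be chosen in $d^{b-1}d^{c-1}$ ways.
\end{itemize}
The conditions $zv,zw\in V_A$ and $v_1\neq w_1$ are exactly what is encoded here. So the number of tripods with these lengths is
\[N(d-1)\,d^{a+b+c-2}.\]
This depends only on $n:=a+b+c$. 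The number of triples $(a,b,c)$ with $a,b,c\geq 1$ summing to $n$ is $\binom{n-1}{2}$. Since $\mathrm{Per}=2n$, we obtain
\[\Pi(\theta,A^t)=\frac{N(d-1)}{2}\sum_{n=3}^{M(\theta)}(n-1)(n-2)\,d^{\,n-2},\qquad M(\theta)=\lceil \theta/2\rceil-1 .\]

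Next, evaluate the sum asymptotically. Summing by parts, or comparing with a geometric series, gives
\[\sum_{n\le M}n^2 d^n=\frac{d}{d-1}M^2d^M+O(Md^M).\]
The lower-order polynomial corrections $(n-1)(n-2)=n^2+O(n)$ contribute only $O(Md^M)$. Hence
\[\Pi=\frac{N}{2}M^2d^{M-1}+O(Md^M).\]
Substituting $M^2=\theta^2/4+O(\theta)$ yields
\[\Pi=\frac{N}{8d}\,\theta^2 d^{M}+O(\theta d^{\theta/2}).\]

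The only delicate point, and the step I expect to be the main obstacle, is identifying $M(\theta)$ with $\lfloor\theta/2\rfloor$.
\begin{itemize}
\item When $\theta/2\notin\mathbb Z$ we have $M=\lfloor\theta/2\rfloor$ exactly, and the stated formula with $C_{d,N}=N/(8d)$ follows.
\item When $\theta/2\in\mathbb Z$ the strict inequality drops one layer, and the leading term differs by a factor $d$. This discrepancy is not absorbed by the error term.
\end{itemize}
I would therefore handle the integer case by interpreting the floor with the same convention as in Theorem \ref{thm:Weyl_law}. There the Fourier series $F$ takes its midpoint value at jumps. Equivalently, I would state the formula for $\theta\notin 2\mathbb Z$, or count with $\le$ at those points, and remark on this explicitly.

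Finally, independence of the count from the specific matrix, beyond $N$ and $d$, gives the same asymptotics for $\Pi(\theta,A)$.
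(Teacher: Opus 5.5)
Your proof is correct, and it takes a genuinely different route from the paper. The paper never counts tripods directly. It parametrises the non-kernel eigenvalues $dN^{-1}+c(|\alpha|+|\hat\beta|+|\tilde\nu|)$ of $|D|$ by triples $(\alpha,\hat\beta,\tilde\nu)$ with $\alpha.\beta\in I_A$ and $\tilde\nu\in\beta_{|\beta|}V_A$, each with multiplicity $d-1$. It then notes that reversing words maps these triples bijectively onto the tripods of $A^t$ together with degenerate triples in which $v$ or $w$ is empty. Finally it reads off $\Pi(\theta,A^t)$ from the Weyl law of Theorem \ref{thm:Weyl_law}, evaluated at $dN^{-1}+c\theta/2$.

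Your exact layer count $N(d-1)d^{n-2}\binom{n-1}{2}$ at perimeter $2n$ bypasses all of that spectral machinery (the heat trace, Perron's formula, the Fourier series $F$). It gives a closed finite sum for every $\theta$ and reproduces the constant $N/(8d)$, so it also serves as an independent check of the leading term of the Weyl law. What the paper's route buys is the interpretation that the spectral counting function of $D$ is, up to lower order, a tripod-counting function in the tree $V_{A^t}$. That interpretation is the actual point of stating the corollary.

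Your treatment of the boundary case is also more careful than the paper's. The paper's proof writes $\mathcal{N}$ as a strict-inequality count, although $\mathcal{N}(\theta)=\#\{\theta_j\leq\theta\}$. Carried out carefully, its argument therefore proves the version with $\mathrm{Per}(v,w,z)\leq\theta$. You are right that with the strict inequality the stated formula is off by a factor $d$ at $\theta\in 2\mathbb{Z}$.

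One correction: the ``midpoint convention'' is not a fix, and it is not equivalent to your other two options. At $\theta=2m$ it replaces $d^{m}$ by $\frac{1}{2}(d^{m}+d^{m-1})$. The true leading term is $\frac{N}{2}m^2d^{m-2}$, so the midpoint value is still too large by a factor $(d+1)/2$, and that discrepancy is not absorbed by $O(\theta d^{\theta/2})$. Use one of the valid fixes instead:
\begin{enumerate}
\item restrict to $\theta\notin 2\mathbb{Z}$;
\item write $d^{\lceil\theta/2\rceil-1}$ in place of $d^{\lfloor\theta/2\rfloor}$, which is exactly your $d^{M(\theta)}$ and is correct for all $\theta$;
\item count with $\leq$.
\end{enumerate}
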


\begin{proof}
The eigenvalues of $D$ are parametrised (without considering their multiplicity) by the set $$Q_{A}:=\{(\alpha,\hat{\beta},\tilde{\nu})\in V_A\times V_A\times (V_A\setminus \{{\o}\}): \alpha.\beta \in I_A, \,\,\tilde{\nu}\in \beta_{|\beta|}V_A\},$$ and they are of the form $dN^{-1}+c(|\alpha|+|\hat{\beta}|+|\tilde{\nu}|)$ with $c=N^{-1}(d-1)$. In particular, the counting function $\mathcal{N}$ from Theorem \ref{thm:Weyl_law} is $$\mathcal{N}(\theta)=(d-1)\cdot  \# \{(\alpha,\hat{\beta},\tilde{\nu})\in Q_A: dN^{-1}+c(|\alpha|+|\hat{\beta}|+|\tilde{\nu}|)<\theta\}.$$ Further, it is straightforward to see that $Q_A$ is in bijection with $$Q'_{A^t}:=\mathrm{Trip}_{A^t,{\o}}\bigsqcup \{(v,w,z)\in V_{A^t}^{\times 3}:z\neq {\o},\,\, zv,zw\in V_{A^t},\,\, (v={\o}\,\, \text{or } w={\o})\}.$$ As a result, we have 
$$\# \{(v,w,z)\in Q'_{A^t}:2(|v|+|w|+|z|)<\theta\}=\frac{\mathcal{N}(dN^{-1}+2^{-1}c\theta)}{d-1},$$ which implies 
\begin{align*}
\Pi(\theta,A^t)&=\frac{\mathcal{N}(dN^{-1}+2^{-1}c\theta)}{d-1}+O(d^{2^{-1} \theta })\\
&= \frac{c_{d,N}}{d-1}(dN^{-1}+2^{-1}c\theta)^2d^{\lfloor 2^{-1} \theta +1  \rfloor}+O(\theta d^{2^{-1} \theta })+O(d^{2^{-1} \theta })\\
&=\frac{c_{d,N}}{d-1}d4^{-1}c^2 \theta^2 d^{\lfloor 2^{-1} \theta \rfloor}+O(\theta d^{2^{-1} \theta })\\
&=C_{d,N}\theta^2d^{\lfloor \frac{\theta}{2} \rfloor}+ O(\theta d^{\frac{\theta}{2}}).
\end{align*}
This completes the proof.
\end{proof}

\subsection{The local Weyl law for regular graphs and quantum ergodicity}

We now turn our attention to the local Weyl law and its applications in quantum ergodicity.

\begin{lemma}
\label{thm:localWeyl_law}
Assume that the row-sums and column-sums of $A$ are equal to $d\geq 2$. Then, $\t_c= \log(d)c^{-1}$ with $c:=N^{-1}(d-1)$, and for $t>\t_c$ and $w,\nu \in V_A$ we have 
\begin{equation}
\label{loalaadwe}
\Tr(S_w S_\nu^* \mathrm{e}^{-t|D|})= \delta_{w,\nu}\mathrm{e}^{-t|w|_c}d\frac{(\mathrm{e}^{ct}+(d-1)\mathrm{e}^{-tN^{-1}}-d)(\mathrm{e}^{ct}-d\mathrm{e}^{-ct})}{(\mathrm{e}^{ct}-d)^3}+\delta_{w,\nu}f_{w}(t),
\end{equation}
for a function $f_{w}$ meromorphic in a neighborhood of the strip $\mathrm{Re}(t)\geq t_c$ with at most order two poles along the set $\mathfrak{T}_A$ (as defined in Theorem \ref{thm:Weyl_law}). The function $t\mapsto \Tr(S_w S_\nu^*\mathrm{e}^{-t|D|})$ extends to a meromorphic function on a neighborhood of the strip $\mathrm{Re}(t)\geq \t_c$ with poles forming $\mathfrak{T}_A$. Also, for the counting function $\mathcal{N}(\theta):=\#\{\theta_j\leq \theta:j\geq 0\}$ defined as in Theorem \ref{thm:Weyl_law} and $P_\theta:=\chi_{[0,\theta]}(|D|)$ the spectral projection of $|D|$, we have that  
\begin{equation}
\label{specprojas}
\Tr(P_\theta S_w S_\nu^*)=\delta_{w,\nu}dN^{-1}\mathcal{N}\left(\theta-|w|_c \right)+O(\theta \mathrm{e}^{\theta\t_c}).
\end{equation}
\end{lemma}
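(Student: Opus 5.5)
Following the proof of Theorem \ref{thm:HeatTrwitha}, I would first reduce to $w=\nu$, since $\Tr(S_wS_\nu^*\mathrm{e}^{-t|D|})=0$ for $w\neq\nu$. The case $w=\o$ is Theorem \ref{thm:Weyl_law} itself, so assume $w\neq\o$. The trace then splits as in that proof into three parts: the main sum over $\alpha.\beta\in I_A$ with $\alpha\in wV_A$, and two boundary sums where $w$ strictly extends $\alpha$.

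In the regular case every ingredient is explicit. $\Tr(\mathrm{e}^{-t\Delta_{C(j)}})$ is independent of $j$ and equals $(\mathrm{e}^{ct}+(d-1)\mathrm{e}^{-tN^{-1}}-d)/(\mathrm{e}^{ct}-d)$. Moreover $F_c(t,l,k)=\sum_{n\ge1}\mathrm{e}^{-ctn}d^{n-1}N^{-1}$ up to the $\delta$-correction, so it is of the form $N^{-1}/(\mathrm{e}^{ct}-d)$ plus an entire term. Hence in \eqref{eq:HeatTr2new} the inner sum $\sum_{k\neq\beta_{|\beta|-1}}\sum_l A_{w_{|w|},l}A_{k,\beta_{|\beta|}}F_c(t,l,k)$ becomes $d(d-1)N^{-1}/(\mathrm{e}^{ct}-d)$ plus lower order. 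This is independent of $w$ and of the last two letters of $\beta$, so the main sum factorises as $\mathrm{e}^{-t|w|_c}$ times a quantity comparable to $S_j(t)$.

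Comparing with $S_j(t)=(\mathrm{e}^{ct}-d\mathrm{e}^{-ct})/(\mathrm{e}^{ct}-d)^2$ from the proof of Theorem \ref{thm:Weyl_law}, I expect the main sum to equal $\mathrm{e}^{-t|w|_c}dN^{-1}\cdot N\cdot(\text{same rational function})$, plus a remainder with at most a double pole at each point of $\mathfrak{T}_A$. This yields the displayed leading term $\mathrm{e}^{-t|w|_c}d(\cdots)$ in \eqref{loalaadwe}.

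The main obstacle is matching the constant exactly, namely $d$ against $N$ with the factor $dN^{-1}$. I would verify it at $t\to\t_c$ against Theorem \ref{thm:HeatTrwitha}, where $\lambda_Au_{w_{|w|}}=dN^{-1}$. The boundary sums I would bound by Corollary \ref{cortoLaplacian_est}. Each is a single $F_c$-type sum, with one factor $(\mathrm{e}^{ct}-d)^{-1}$, times a localised cylinder heat trace, which has one more such factor because $u(t)$ is constant and $F_A$ is constant. So they have double poles only at $\mathfrak{T}_A$ and go into $f_w$. Since all functions are rational in $\mathrm{e}^{ct}$ and $\mathrm{e}^{-tN^{-1}}$, meromorphy near $\mathrm{Re}(t)\ge\t_c$ with poles in $\mathfrak{T}_A$ is immediate.

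For \eqref{specprojas} I would rerun the Perron inversion argument of Theorem \ref{thm:Weyl_law} with $\Tr(S_wS_w^*\mathrm{e}^{-t|D|})$ in place of $\Tr(\mathrm{e}^{-t|D|})$. The leading term equals $dN^{-1}\mathrm{e}^{-t|w|_c}\Tr(\mathrm{e}^{-t|D|})$, and multiplying a Laplace-transform by $\mathrm{e}^{-t|w|_c}$ shifts the counting function by $|w|_c$, giving $dN^{-1}\mathcal{N}(\theta-|w|_c)$. The part $f_w$ has only double poles. Its residues of $f_w(t)\mathrm{e}^{\theta t}t^{-1}$ at $\t_c(k)$ are $O(\theta\mathrm{e}^{\theta\t_c}/|\t_c(k)|)$, with coefficients periodic in $k$. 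I would sum them as a pointwise convergent Fourier series exactly as for $\tilde G$, getting $O(\theta\mathrm{e}^{\theta\t_c})$. The contour pieces are handled as in \eqref{eq:Weyl_5}--\eqref{eq:Weyl_6}, using that $f_w$ is bounded on the horizontal segments and on $\mathrm{Re}(t)=\eta_-$. Here I should check that $f_w$ is also meromorphic slightly left of $\t_c$, which holds since it is rational in exponentials.
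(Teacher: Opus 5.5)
Your route is the paper's own. The paper's proof of this lemma only says that \eqref{loalaadwe} follows by redoing the computation of Theorem~\ref{thm:HeatTrwitha}, and that \eqref{specprojas} then follows by rerunning the Perron-inversion argument of Theorem~\ref{thm:Weyl_law}; you supply the omitted computation in more detail. Several of those details need correcting, however.

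First, the empty word is not covered by Theorem~\ref{thm:Weyl_law}. There $\Tr(\mathrm{e}^{-t|D|})=NR(t)$, with $R(t)$ the rational function displayed in \eqref{loalaadwe}, whereas the lemma would give $dR(t)$. For $d<N$ the would-be remainder $(N-d)R(t)$ has triple poles, and \eqref{specprojas} would read $\mathcal{N}(\theta)=dN^{-1}\mathcal{N}(\theta)+O(\theta\mathrm{e}^{\theta\t_c})$. The constant $dN^{-1}=\lambda_Au_{w_{|w|}}$ only makes sense for non-empty $w$, so the lemma has to be read for non-empty $w$, as in Theorem~\ref{thm:HeatTrwitha}(2).

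Second, your claim that $F_c(t,l,k)-N^{-1}(\mathrm{e}^{ct}-d)^{-1}$ is entire is false. Besides an entire part it contains $\sum_{n\geq 2}\mathrm{e}^{-ctn}\bigl((A^{n-1})_{lk}-d^{n-1}N^{-1}\bigr)$, which in general has poles where $\mathrm{e}^{ct}$ is a non-Perron eigenvalue of $A$.

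Third, checking the $(t-\t_c)^{-3}$ coefficient against Theorem~\ref{thm:HeatTrwitha} only concerns the real point $\t_c$. The lemma asserts that $f_w$ has at most double poles at every $\t_c(k)$, so this check does not suffice.

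Both the second and third points disappear if you sum exactly. Since $T(t):=\Tr(\mathrm{e}^{-t\Delta_{C(j)}})$ is independent of $j$, the main sum equals $T(t)\Sigma_w(t)$, where $\Sigma_w(t):=\sum_{\alpha.\beta\in I_A,\,\alpha\in wV_A}\mathrm{e}^{-ct(|\alpha|+|\beta|)}$ (here $|\cdot|_c=c|\cdot|$). Fix the last letter of $\alpha$. In- and out-regularity make $\sum_\beta\mathrm{e}^{-ct|\beta|}=d(1-\mathrm{e}^{-ct})/(\mathrm{e}^{ct}-d)$, independent of that letter. The words $\alpha=w\alpha'$ contribute $\mathrm{e}^{-ct|w|}\mathrm{e}^{ct}/(\mathrm{e}^{ct}-d)$. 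Hence
\[
\Sigma_w(t)=\frac{d\,\mathrm{e}^{-ct|w|}(\mathrm{e}^{ct}-1)}{(\mathrm{e}^{ct}-d)^{2}},\qquad \Sigma_w(t)-d\,\mathrm{e}^{-ct|w|}S_j(t)=-\frac{d\,\mathrm{e}^{-ct(|w|+1)}}{\mathrm{e}^{ct}-d},
\]
with $S_j$ as in the proof of Theorem~\ref{thm:Weyl_law}. Because $\Tr(\mathrm{e}^{-t|D|})=NT(t)S_j(t)$ and $T$ has simple poles on $\mathfrak{T}_A$, this gives the leading term with the exact constant $d$. It also gives at most double poles at all $\t_c(k)$ at once.

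The boundary $\beta$-sums are likewise full column sums, namely $F_c(t,w_1)$ and $\mathrm{e}^{-ct}\bigl(1+(d-1)(\mathrm{e}^{ct}-d)^{-1}\bigr)$. They multiply $\beta$-independent localised cylinder traces. Hence $f_w$ is rational in $\mathrm{e}^{ct}$ and $\mathrm{e}^{-t/N}$ with poles only on $\mathfrak{T}_A$, and any $\eta_-\in(0,\t_c)$ works.

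Finally, boundedness of the integrand on $\mathrm{Re}(t)=\eta_-$ only yields the bound \eqref{eq:Weyl_6}. That bound grows like $\log T_n$, so it does not control $\lim_n I_{n,3}(\theta)$; this weakness is inherited from the proof of Theorem~\ref{thm:Weyl_law}. To fix it, expand the integrand on that line into an absolutely convergent series of exponentials, which is possible since $|\mathrm{e}^{ct}|<d$ there. Applying Perron's formula termwise then gives $\lim_n I_{n,3}(\theta)=O(\mathrm{e}^{\theta\eta_-})$, which is what you need.
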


We will refer to the asymptotics \eqref{loalaadwe} as the local heat trace asymptotics and to the asymptotics \eqref{specprojas} as the local Weyl law.

\begin{proof}
The same proof as that of Theorem \ref{thm:Weyl_law}, using Perron's theorem summarized in the formula \eqref{eq:Weyl_2}, shows that the local heat trace asymptotics \eqref{loalaadwe} implies the local Weyl law \eqref{specprojas}. The proof of the local heat trace asymptotics \eqref{loalaadwe} follows the same lines as the proof of Theorem \ref{thm:HeatTrwitha}, and we omit this long computation.
\end{proof}

\begin{thm}
\label{averageqe}
Assume that the row-sums and column-sums of $A$ are equal to $d\geq 2$. Then for any $a\in O_A$, with $P_\theta:=\chi_{[0,\theta]}(|D|)$ denoting the spectral projection of $|D|$ and $\varphi_A$ the KMS-state on $O_A$, we have that 
$$\lim_{\theta\to \infty} \frac{\Tr(P_\theta a)}{\Tr(P_\theta)}=\varphi_A(a).$$
\end{thm}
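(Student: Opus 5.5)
The plan is to reduce to the spanning elements $S_wS_\nu^*$, apply the local Weyl law of Lemma \ref{thm:localWeyl_law}, and finish with a density argument in $O_A$.

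\textbf{Step 1: the positivity and norm bound.} Each $P_\theta$ is a finite rank projection, since $|D|$ has compact resolvent. The functional
$$\psi_\theta(a):=\Tr(P_\theta a)/\Tr(P_\theta)=\Tr(P_\theta aP_\theta)/\Tr(P_\theta)$$
is therefore a state on $O_A$, so $|\psi_\theta(a)|\leq \|a\|$ uniformly in $\theta$. Hence it suffices to prove convergence on the dense $*$-subalgebra $\LC=\C^*[S_i]$, which is spanned by the elements $S_wS_\nu^*$ with $w,\nu\in V_A$. The standard $\varepsilon/3$ argument then extends the limit to all $a\in O_A$, using that $\varphi_A$ is also a state.

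\textbf{Step 2: the generators.} For $a=S_wS_\nu^*$, the local Weyl law \eqref{specprojas} gives
$$\Tr(P_\theta S_wS_\nu^*)=\delta_{w,\nu}\,dN^{-1}\mathcal{N}(\theta-|w|_c)+O(\theta\mathrm{e}^{\theta\t_c}).$$
I then divide by $\Tr(P_\theta)=\mathcal{N}(\theta)$. By Theorem \ref{thm:Weyl_law}, $\mathcal{N}(\theta)$ grows like $\theta^2 d^{\lfloor(\theta N-1)/(d-1)\rfloor}$, which is of order $\theta^2\mathrm{e}^{\theta\t_c}$ because $d^{\theta N/(d-1)}=\mathrm{e}^{\theta\t_c}$. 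Consequently the error term, divided by $\mathcal{N}(\theta)$, is $O(\theta^{-1})$ and tends to zero. If $w\neq\nu$ this already gives the limit $0=\varphi_A(S_wS_\nu^*)$.

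\textbf{Step 3: the main obstacle.} The difficulty is the ratio $\mathcal{N}(\theta-|w|_c)/\mathcal{N}(\theta)$. The floor function makes $\mathcal{N}$ oscillate, so this ratio need not converge unless the shift interacts well with the floor. In the regular case $|w|_c=c|w|$ with $c=(d-1)/N$. The floor argument therefore shifts by exactly an integer:
$$\left\lfloor\tfrac{(\theta-c|w|)N-1}{d-1}\right\rfloor=\left\lfloor\tfrac{\theta N-1}{d-1}\right\rfloor-|w|.$$
The leading terms then give the ratio
$$\frac{(\theta-c|w|)^2}{\theta^2}\,d^{-|w|}+O(\theta^{-1})\to d^{-|w|}.$$
So the limit is $dN^{-1}d^{-|w|}=d^{-|w|+1}N^{-1}$. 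Since $\lambda_A=d$ and $u_j=N^{-1}$, this equals $\lambda_A^{-|w|+1}u_{w_{|w|}}=\varphi_A(S_wS_w^*)$. For $w=\nu=\o$ the statement is trivial.

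One point needs checking here: that the normalisation in \eqref{specprojas} is consistent with this value for $|w|\geq1$ (and that $|w|=0$ is handled separately). I will verify it against Theorem \ref{thm:HeatTrwitha}, whose leading coefficient $\lambda_Au_{w_{|w|}}\mathrm{e}^{-t|w|_c}$ at $t=\t_c$ equals $dN^{-1}d^{-|w|}$, matching the value above. Once this is confirmed, Step 1 completes the proof.
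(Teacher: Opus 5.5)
Your proposal is correct and follows the paper's proof. You reduce to $S_wS_\nu^*$, apply the local Weyl law of Lemma \ref{thm:localWeyl_law}, evaluate $\mathcal{N}(\theta-|w|_c)/\mathcal{N}(\theta)\to d^{-|w|}$ via Theorem \ref{thm:Weyl_law}, and extend to all of $O_A$ by an $\varepsilon/3$ argument using that the normalised functionals are states. Your explicit checks are exactly the details the paper's one-line computation leaves implicit: $|w|_c=c|w|$ shifts the floor by the integer $|w|$, the error term is $O(\theta^{-1})$ relative to $\mathcal{N}(\theta)$, and the empty word must be handled separately because \eqref{specprojas} would give the wrong constant $dN^{-1}$ there.
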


\begin{proof}
It follows from Lemma \ref{thm:localWeyl_law} that for $a=S_w S_\nu^*$ it holds that
$$\lim_{\theta\to \infty} \frac{\Tr(P_\theta S_w S_\nu^*)}{\Tr(P_\theta)}=\lim_{\theta\to \infty}\delta_{w,\nu}\frac{d}{N}\frac{\mathcal{N}\left(\theta-|w|_c\right)}{\mathcal{N}(\theta)}=\delta_{w,\nu}d^{-|w|+1}N^{-1}=\varphi_A(S_w S_\nu^*),$$
where the second to last identity follows from Theorem \ref{thm:Weyl_law}. By linearity and density, we conclude that $\lim_{\theta\to \infty} \frac{\Tr(P_\theta a)}{\Tr(P_\theta)}=\varphi_A(a)$ for any $a\in O_A$ from an $\varepsilon/3$-argument. The last statement of the theorem now follows from Theorem \ref{thm:HeatTrwitha} and \cite[Proposition 4.1]{hekkelmc}.
\end{proof}

As discussed in Remark \ref{qedisc} above, Theorem \ref{averageqe} can be interpreted as a statement about quantum ergodicity. We refer the reader also to \cite{hekkelmc} for further context of quantum ergodicity in noncommutative geometry.

\begin{prop}
\label{aldjnadljn}
Under the assumptions and notations of Theorem \ref{averageqe}, consider the sequence of states 
$$\rho_n(a)=\langle e_n, ae_n\rangle_{L^2(G_A,\mu_{G_A})}, \; n=1,2,\ldots.$$
There is no density 1 subsequence of the integers along which $(\rho_n)_n$ converges in the weak$^*$-sense.
\end{prop}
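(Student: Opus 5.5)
The plan is to exhibit two disjoint sets of indices, each of positive upper density (in fact of density bounded below along the relevant counting scale), on which $\rho_n$ evaluated at a fixed test element takes values that stay a fixed distance apart. A density one subsequence must meet both sets in infinitely many terms, so $\rho_n$ cannot converge along it.

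The test element is a projection $a=S_iS_i^*$, or more generally $a=\chi_{G_\gamma}$-type elements. By Theorem \ref{thm:O_A_action} and Remark \ref{rem: char_functions}, each eigenfunction $\mathrm{e}_{(\gamma,\nu,j)}$ (and $\mathrm{e}_\gamma$) is supported in a single bisection $G_\gamma$, and $S_wS_w^*$ acts on $L^2(G_\gamma)$ as multiplication by $\chi_{\kappa_\gamma^{-1}(C(w))}$, as computed in the proof of Theorem \ref{thm:HeatTrwitha}. Hence $\rho_n(S_wS_w^*)=\int |e_n|^2\chi_{\kappa_\gamma^{-1}(C(w))}\,\d\mu_\gamma$. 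In particular, if $r(\gamma)\in wV_A$ then $\rho_n(S_wS_w^*)=1$. If $r(\gamma)$ is nonempty and does not begin with $w$ (and $w$ does not extend $r(\gamma)$), then $\rho_n(S_wS_w^*)=0$. So $\rho_n$ takes only the values $0$ or $1$ on $S_iS_i^*$ for all eigenfunctions with $r(\gamma)\neq\o$.

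First fix a letter $i$ and set $E_1=\{n: e_n \text{ lives on } G_\gamma,\ r(\gamma)_1=i\}$ and $E_0=\{n: r(\gamma)\ne\o,\ r(\gamma)_1\ne i\}$. Then $\rho_n(S_iS_i^*)=1$ on $E_1$ and $=0$ on $E_0$. Next compute the proportion of eigenvalues below $\theta$ in $E_1$ and $E_0$. Under the row and column regularity assumption, Lemma \ref{thm:localWeyl_law} gives $\Tr(P_\theta S_iS_i^*)\sim N^{-1}\mathcal N(\theta)$ up to the explicit shift. Moreover, the eigenfunctions with $r(\gamma)=\o$ form a proportion tending to zero: their count is governed by a heat trace with only a double pole, one factor $F_c$ missing in the proof of Theorem \ref{thm:HeatTr}, so $O(\theta e^{\theta \t_c})$. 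Combining these, the counting-densities of $E_1$ and $E_0$ are about $1/N$ and $1-1/N$, both positive. One subtlety is that the index ordering is by $\mathcal N$ and not linear in $\theta$. I would handle this by noting that density is measured through $\#\{n\le \mathcal N(\theta)\}$, so the ratio statements transfer directly, including along degenerate eigenvalue blocks, since ties contribute within the same estimates.

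Finally, given a density one set $S$, the sets $S\cap E_1$ and $S\cap E_0$ are infinite because each $E_k$ has positive lower density, and $\rho_n(S_iS_i^*)$ alternates between $0$ and $1$ along $S$, so it does not converge. The main obstacle is making the lower density of $E_1$ (and $E_0$) rigorous rather than only a Cesàro ratio, given that $\mathcal N(\theta)$ oscillates through the factor $d^{\lfloor\cdot\rfloor}$. I would address this by comparing $\#E_1\cap\{\theta_n\le\theta\}$ to $\mathcal N(\theta)$ at all $\theta$ via the local Weyl law \eqref{specprojas}. The shifted quantity $\mathcal N(\theta-|i|_c)$ is at least a fixed fraction of $\mathcal N(\theta)$ by the explicit asymptotic in Theorem \ref{thm:Weyl_law}, which gives a uniform positive lower density.
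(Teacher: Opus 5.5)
Your proof is correct. It detects non-convergence differently from the paper.

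The paper passes to the measures $\mu_n$ on $\Omega_A$ with $\rho_n(a)=\int a\,\mathrm{d}\mu_n$. It then argues that along any density one subsequence, for almost every $x$, one can extract further subsequences whose $\mu_n$ are supported in the shrinking cylinders $C(x_1\cdots x_l)$. Hence every such $\delta_x$ is a weak$^*$ limit point, and no single limit can exist. That argument tacitly needs the fact that, for each cylinder $C(w)$, the indices with $\mu_n$ supported in $C(w)$ (those with $r(\gamma)\in wV_A$) do not form a density zero set. Your proof makes exactly this counting step explicit, for a single letter, via Lemma \ref{thm:localWeyl_law} and Theorem \ref{averageqe}. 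After that you only need the $0/1$ dichotomy for $\rho_n(S_iS_i^*)$ on the two complementary families $E_1,E_0$. The paper's route gives the stronger qualitative picture: point masses appear as limit points along every density one subsequence. Yours is more self-contained on the density estimate, which the paper leaves implicit.

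One point should be corrected, though it is not a gap. Densities are computed over all cut-offs $M$, not only $M=\mathcal{N}(\theta)$. A single eigenvalue of $|D|$ carries asymptotically a fraction $1-d^{-1}$ of all indices up to it. So the arbitrary ordering inside an eigenspace genuinely changes $\#(E_1\cap[1,M])/M$ at intermediate $M$, and the ratios do not ``transfer directly''.

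You do not need lower density at all. If $S$ has density one and $E$ has positive upper density, then $S\cap E$ is infinite. At $M=\mathcal{N}(\theta)$, the count $\#(E_1\cap[1,M])$ does not depend on the ordering inside eigenspaces. It equals $\Tr(P_\theta S_iS_i^*)+o(\mathcal{N}(\theta))$, with the error coming from the $r(\gamma)=\o$ indices. Theorem \ref{averageqe} therefore gives upper densities $N^{-1}$ and $1-N^{-1}$ for $E_1$ and $E_0$ directly.

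If you still want positive lower density, sandwich $M$ between the values of $\mathcal{N}$ at consecutive eigenvalues. Then use $\mathcal{N}(\theta-c)\geq \epsilon\,\mathcal{N}(\theta)$ with $c=(d-1)N^{-1}$.
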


\begin{proof}
Let $\mu_n$ be the measure on $\Omega_A$ such that for $a\in C(\Omega_A)$,
$$\rho_{n}(a)= \int_{\Omega_A} a\mathrm{d}\mu_n.$$
We will show that $(\mu_n)_n$ cannot converge in the weak$^*$-sense along any density one subsequence $(n_k)_k$. 

Consider a density one subsequence $(n_k)_k$. Since the subsequence has density one, the construction of the ON-basis $(e_n)_n$ in Subsection \eqref{subsec:ham} ensures that for any $x=x_1x_2\cdots $ from a set of full measure in $\Omega_A$, we can find a subsequence $(n_{k_l})_l$ such that $\mu_{n_{k_l}}$ is supported in the cylinder set $C(x_1x_2\cdots x_l)\subset \Omega_A$. In particular, $\lim_{l\to \infty}\mu_{n_{k_l}}=\delta_x$. Therefore, $(\mu_n)_n$ cannot converge in the weak$^*$-sense along $(n_k)_k$.
\end{proof}

\end{document}